\newcommand{\subjclass}[2][2020]{%
  \let\@oldtitle\@title%
  \gdef\@title{\@oldtitle\footnotetext{#1 \emph{Mathematics subject classification}: #2}}%
}
\newcommand{\keywords}[1]{%
  \let\@@oldtitle\@title%
  \gdef\@title{\@@oldtitle\footnotetext{\emph{Keywords}: #1}}%
}
\def\c{\cdot}
\def\r{\rho}
\newtheorem{thm}{Theorem}[section]
\newtheorem{prop}{Proposition}[section]
\newtheorem{rem}{Remark}[section]
\newtheorem{cor}{Corollary}[section]
\newtheorem{df}{Definition}[section]
\newtheorem{ex}{Example}[section]
\numberwithin{equation}{section}
\date{}
\begin{document}

\title{Malcev  Yang-Baxter equation, weighted   $\mathcal{O}$-operators  on  Malcev algebras  and post-Malcev algebras}
\author{F. Harrathi $^{1}$\footnote{E-mail: harrathifattoum285@gmail.com}\ ,
 S. Mabrouk $^{2}$\footnote{E-mail: mabrouksami00@yahoo.fr \text{(Corresponding author)}}\ ,
O. Ncib $^{2}$\footnote{E-mail: othmenncib@yahoo.fr}\ ,
S. Silvestrov $^{3}$\footnote{ E-mail: sergei.silvestrov@mdu.se }
}

\date{
$^{1}${\small University of Sfax, Faculty of Sciences Sfax,  BP 1171, 3038 Sfax, Tunisia} \\
$^{2}${\small  University of Gafsa, Faculty of Sciences Gafsa, 2112 Gafsa, Tunisia}\\
$^{3}${\small M\"{a}lardalen University,
Division of Mathematics and Physics,} \\
{\small
School of Education, Culture and Communication,} \\
\nopagebreak
{\small \hspace{0.5 cm} Box 883, 72123 V\"{a}steras, Sweden}
}

\maketitle

\begin{abstract}
The purpose of this paper is to study  the  $\mathcal{O}$-operators on  Malcev algebras and discuss the solutions of Malcev Yang-Baxter equation
by  $\mathcal{O}$-operators. Furthermore we introduce the notion of  weighted   $\mathcal{O}$-operators  on  Malcev algebras, which can be characterized
by graphs of the semi-direct product Malcev algebra. Then  we introduce a new algebraic structure called  post-Malcev algebras.
Therefore, post-Malcev algebras can be viewed as the underlying
algebraic structures of  weighted   $\mathcal{O}$-operators  on  Malcev algebras. A post-Malcev algebra also gives rise to a new Malcev algebra.
Post-Malcev algebras are analogues for Malcev algebras of post-Lie algebras and fit into a bigger framework with a close relationship with post-alternative algebras. \\[0,2cm]
\noindent\textbf{Keywords}: Alternative algebra, Malcev algebra, weighted $\mathcal{O}$-operator,  Rota-Baxter operator, representation, Malcev Yang-Baxter equation  \\
\noindent\textbf{MSC2020}: 17D10, 17A01, 17A30, 17B10, 17B38
\end{abstract}

\newpage
\tableofcontents
\section{Introduction}
~~

Associative dialgebras were introduced by Loday and Pirashvili \cite{LodayPirashvili93} as the universal
enveloping algebras of Leibniz algebras (a noncommutative analogue of Lie algebras).
Loday also defined dendriform dialgebras in his study of algebraic $K$-theory \cite{Loday01}.
Dendriform dialgebras have two operations whose sum is an associative operation.
Moreover, the operads associated to associative dialgebras and dendriform dialgebras are Koszul dual.
Loday and Ronco introduced trialgebras, a generalization of dialgebras, and dual to them, dendriform trialgebras \cite{LodayRonco04}.
This means one can decompose an associative product into sums of two or three operations satisfying some compatibility conditions.

Aguiar \cite{Aguiar00} first noticed a relation between Rota-Baxter algebras and dendriform dialgebras.
He proved that an associative algebra with a Rota-Baxter operator $\mathcal{R}$ of weight zero is a dendriform dialgebra
relative to operations $ x \prec y = x\mathcal{R}(y)$ and $x \succ y = \mathcal{R}(x)y$.
Ebrahimi-Fard \cite{EbrahimiFard02} generalized this fact to Rota-Baxter algebras of arbitrary weight
and obtained dendriform dialgebras and dendriform trialgebras.
Universal enveloping Rota-Baxter algebras of weight $\lambda$ for dendriform dialgebras and trialgebras were defined by
Ebrahimi-Fard and Guo in \cite{EbrahimiFardGuo08}.

This idea of splitting associative operations gave rise to different algebraic structures with multiple binary operations
such as quadri-algebras, ennea-algebras, NS-algebras, dendriform-Nijenhuis algebras or octo-algebras.
These constructions can be put into the framework of (black square) products of nonsymmetric operads \cite{EbrahimiFardGuo05}.

Analogues of dendriform dialgebras, quadri-algebras and octo-algebras for Lie algebras \cite{BaiLiuNi10,LiuNiBai11},
Jordan algebras \cite{BaiHou12,HouNiBai13}, alternative algebras \cite{BaiNi} or Poisson algebras \cite{Aguiar00} have been obtained.
They can be regarded as splitting the operations in these latter algebras.

$\textbf{Malcev algebras and Rota-Baxter operators.}$
Rota-Baxter operators were introduced by G. Baxter \cite{baxter} in 1960 in the study of fluctuation theory in Probability.Rota-Baxter operators
$\mathcal{R}: A\to A$ are defined on an algebra $A$ by the identity
\begin{align}
\mathcal{R}(x)\mathcal{R}(y) = \mathcal{R}(\mathcal{R}(x)y + x\mathcal{R}(y) + \lambda xy),
\label{rotabaxterop}
\end{align}
for all $x, y\in A$, where $\lambda$ is a scalar, called the weight of $\mathcal{R}$. These operators were then further
investigated, by G.-C. Rota \cite{Rota}, Atkinson \cite{Atkinson}, Cartier \cite{Cartier} and others. Since the late 1990s, the study of Rota-Baxter operators has made great
progress in many areas, both in theory and in applications \cite{Aguiar, EbrahimiFardGuo05,EbrahimiFardGuo08,EbrahimiFardGuo09, Guo0,Guo1,Guo2}.

The notion of  Malcev algebras was introduced by Malcev \cite{malcev}, who called these objects Moufang-Lie algebras.
\begin{df}
  A $\textbf{Malcev algebra}$ is a non-associative algebra $A$ with an anti-symmetric multiplication $[\c,\c]$ that satisfies the Malcev identity for $x,y,z \in A$,
\begin{equation}
\label{maltsev}
J(x,y,[x,z]) = [J(x,y,z),x],
\end{equation}
where $J(x,y,z) = [[x,y],z] + [[z,x],y] + [[y,z],x]$ is the Jacobian.

Expanding the Jacobian, the Malcev identity \eqref{maltsev} is equivalent to Sagle's identity
for $x,y,z,t\in A:$
\[[[x, z], [y, t]] = [[[x, y], z], t] + [[[y, z], t], x] + [[[z, t], x], y] + [[[t, x], y], z].\]
 \end{df}
 In particular, Lie algebras are examples of Malcev algebras.   Malcev algebras play an important role in the geometry of smooth loops.   Just as the tangent
 algebra of a Lie group is a Lie algebra, the tangent algebra of a locally analytic Moufang loop is a Malcev algebra
 \cite{Kerdman,kuzmin2,malcev,nagy,sabinin}.  The reader is referred to \cite{gt,myung,okubo} for discussions about the relationships between Malcev algebras, exceptional Lie algebras and physics.

On the other hand, Madariaga \cite{Madariaga} first established the following connection from Rota-Baxter operators on Malcev algebras
to pre-Malcev algebras.
\begin{df}
A $\textbf{pre-Malcev algebra}$ is a vector space $A$ endowed with a bilinear
product $\rhd$ satisfying the following identity
 for $x, y, z, t \in A$,
 \begin{equation}\label{PM}
 [y, z] \rhd (x \rhd t)+ [[x, y], z] \rhd t+ y \rhd ([x, z] \rhd t- x \rhd (y \rhd (z \rhd t)) + z \rhd (x \rhd (y \rhd t))=0,
\end{equation}
where $[x,y]=x\rhd y-y\rhd x.$
 \end{df}
\begin{thm}
Let $(A,[\c,\c])$ be a Malcev algebra and $\mathcal{R}$ a Rota-Baxter operator. Then the binary operation
$$x\vartriangleright y=[\mathcal{R}(x),y]$$
defines a pre-Malcev algebras.
\end{thm}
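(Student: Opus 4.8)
The plan is to substitute $x\rhd y=[\mathcal{R}(x),y]$ directly into the pre-Malcev identity \eqref{PM} and show that it collapses, via the Rota-Baxter relation \eqref{rotabaxterop} (taken with weight $\lambda=0$), to Sagle's identity for $(A,[\c,\c])$. Throughout write $\bar u:=\mathcal{R}(u)$, and let $[u,v]_{\rhd}:=u\rhd v-v\rhd u$ denote the commutator of $\rhd$, which is the bracket occurring in \eqref{PM}.

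First I would record the one computation that carries the whole argument. From the definition of $\rhd$ one has $[u,v]_{\rhd}=[\bar u,v]-[\bar v,u]=[\bar u,v]+[u,\bar v]$, whence by \eqref{rotabaxterop} with $\lambda=0$,
$$\mathcal{R}\big([u,v]_{\rhd}\big)=\mathcal{R}\big([\bar u,v]+[u,\bar v]\big)=[\bar u,\bar v].$$
Iterating this, $\mathcal{R}$ sends any nested bracket of $\rhd$-commutators to the corresponding nested Malcev bracket of the $\bar u$'s; for instance $\mathcal{R}\big(\big[[x,y]_{\rhd},z\big]_{\rhd}\big)=\big[[\bar x,\bar y],\bar z\big]$. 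This is the only point at which the Rota-Baxter property intervenes.

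Next I would expand the five summands of \eqref{PM} using $a\rhd b=[\bar a,b]$ together with the collapsing identity above, obtaining
$$[y,z]_{\rhd}\rhd(x\rhd t)=\big[[\bar y,\bar z],[\bar x,t]\big],\quad \big[[x,y]_{\rhd},z\big]_{\rhd}\rhd t=\big[[[\bar x,\bar y],\bar z],t\big],\quad y\rhd\big([x,z]_{\rhd}\rhd t\big)=\big[\bar y,[[\bar x,\bar z],t]\big],$$
$$x\rhd\big(y\rhd(z\rhd t)\big)=\big[\bar x,[\bar y,[\bar z,t]]\big],\qquad z\rhd\big(x\rhd(y\rhd t)\big)=\big[\bar z,[\bar x,[\bar y,t]]\big],$$
so that the left-hand side of \eqref{PM} becomes
$$\big[[\bar y,\bar z],[\bar x,t]\big]+\big[[[\bar x,\bar y],\bar z],t\big]+\big[\bar y,[[\bar x,\bar z],t]\big]-\big[\bar x,[\bar y,[\bar z,t]]\big]+\big[\bar z,[\bar x,[\bar y,t]]\big].$$

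Finally I would invoke Sagle's identity (recalled in the Introduction) with the substitution $x\mapsto\bar y$, $z\mapsto\bar z$, $y\mapsto\bar x$, $t\mapsto t$; after rewriting each of the four terms on its right-hand side with the anti-symmetry of $[\c,\c]$, the identity becomes precisely the assertion that the displayed expression above vanishes. Hence \eqref{PM} holds and $(A,\rhd)$ is a pre-Malcev algebra. The only genuine labour is the bookkeeping in the expansion step; the conceptual content is that the weight-zero Rota-Baxter condition turns the pre-Malcev identity into Sagle's identity evaluated on the image of $\mathcal{R}$ (extended by the last slot $t$), so no new identity for Malcev algebras is needed.
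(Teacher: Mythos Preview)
Your argument is correct. The paper does not give a direct proof of this theorem (it is quoted from Madariaga in the introduction), but it proves the generalization to $\lambda$-weighted $\mathcal{O}$-operators in Theorem~\ref{thm:depostmal}, whose specialization $\rho=ad$, $\lambda=0$ is exactly the present statement (see Corollary~\ref{PostMalcevByRotaBaxter}). The mechanism there is the same as yours: the $\mathcal{O}$-operator identity collapses $\mathcal{R}$ applied to iterated $\rhd$-commutators to iterated Malcev brackets of the $\mathcal{R}$-images, after which the only surviving identity to verify is the Malcev representation condition~\eqref{representation}; for $\rho=ad$ this condition is precisely Sagle's identity. So your direct invocation of Sagle is the adjoint-representation reading of the paper's argument, and the two approaches coincide.
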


Closely related to Malcev algebras are alternative algebras.  An alternative algebra is an algebra whose associator is an alternating function.  In particular, all associative algebras are alternative, but there are plenty of non-associative alternative algebras, such as the octonions.  Roughly speaking, alternative algebras are related to Malcev algebras as associative algebras are related to Lie algebras.  Indeed, as Malcev observed in \cite{malcev}, every alternative algebra $A$ is Malcev-admissible.  There are many Malcev-admissible algebras that are not alternative; see, e.g., \cite{myung}.  The reader is referred to \cite{tw} for applications of alternative algebras to projective geometry, buildings, and algebraic groups.

\textbf{Malcev Yang-Baxter equation and $\mathcal{O}$-operators.}
The notion of Malcev bialgebra was introduced by Vershinin in \cite{versh}  as an analogue of a Lie bialgebra
(also see \cite{goncharov}). A class of Malcev bialgebras (coboundary
cases) are obtained from the solutions of an algebraic equation in a Malcev
algebra, which is an analogue of the classical Yang-Baxter equation (CYBE)
in a Lie algebra (\cite{goncharov}). It is called Malcev Yang-Baxter equation (MYBE)
for convenience. The CYBE arose from the study of inverse scattering theory in 1980s. Later it was recognized as the "semi-classical
limit" of the quantum Yang-Baxter equation which was encountered by C. N. Yang in the
computation of the eigenfunctions of a one-dimensional fermion gas with delta function
interactions \cite{yang} and by R. J. Baxter in the solution of the eight vertex model in statistical
mechanics \cite{baxter1}. The study of the CYBE is also related to classical integrable systems and
quantum groups (see \cite{Chari} and the references therein).
An important approach in the study of the CYBE was through the interpretation of its
tensor form in various operator forms which proved to be effective in providing solutions of
the CYBE, in addition to the well-known work of Belavin and Drinfeld \cite{BelavinDrinfeld}.

In the case of Lie algebras, it was Semenov-Tian-Shansky who first introduced
certain operator form of the CYBE(\cite{Semenov}). Later Kupershmidt introduced
a notion of $\mathcal{O}$-operator of a Lie algebra as a generalization of (the operator
form of) the CYBE in a Lie algebra \cite{K2}. Nevertheless, the $\mathcal{O}$-operators of
Lie algebras play more interesting roles in the study of the CYBE. In fact, a
skew-symmetric solution of the CYBE is exactly a special $\mathcal{O}$-operator (associated
to the coadjoint representation) and more importantly, there are a kind
of algebraic structures behind the $\mathcal{O}$-operators of Lie algebras and the related
CYBE, namely, pre-Lie algebras, in the following sense: the $\mathcal{O}$-operators of
Lie algebras provide a direct relationship between Lie algebras and pre-Lie
algebras and in the invertible cases, they provide a necessary and sufficient
condition for the existence of a compatible pre-Lie algebra structure on a Lie
algebra, as an immediate consequence, there are some solutions of the CYBE
in certain Lie algebras obtained from pre-Lie algebras \cite{BaiNi}.



The notion of post-algebras goes back to Rosenbloom in \cite{Rosenbloom} (1942). An equivalent
formulation of the class of post-algebras was given by Rousseau in \cite{Rousseau} (1969, 1970)
which became a starting point for deep research.
Post-Lie algebras have been introduced by Vallette in 2007 \cite{Vallette}
in connection with the homology of partition posets and the study of Koszul
operads. However, J. L. Loday studied pre-Lie algebras and post-Lie algebras
within the context of algebraic operad triples, see for more details \cite%
{Loday01,LodayRonco04}. In the last decade, many works \cite{Burde16,Fard,sl2}
intrested in post-Lie algebra structures, motivated by the importance of
pre-Lie algebras in geometry and in connection with generalized Lie algebra
derivations.

Recently, Post-Lie algebras which are non-associative algebras played an
important role in different areas of pure and applied mathematics. They
consist of a vector space $A$ equipped with a Lie bracket $[\cdot ,\cdot ]$
and a binary operation $\triangleright $ satisfying the following axioms%
\begin{align}
x\triangleright \lbrack y,z]& =[x\triangleright y,z]+[y,x\triangleright z],
\\
\lbrack x,y]\triangleright z& =as_{\triangleright
}(x,y,z)-as_{\triangleright }(y,x,z).
\end{align}%
If the bracket $[\cdot ,\cdot ]$ is zero, we have exactly a pre-Lie
structure.
 It is worth to note that, in spite the post-Lie product does not yield a Lie bracket by antisymmetrization,
the bilinear product $\{\c,\c\} : A\otimes A\to A$, defined for all $x, y\in A$ by
\begin{align}
 \{x, y\} = x \vartriangleright y - y \vartriangleright x + [x, y].
\end{align}
defines on $A$ another Lie algebra structure.
The varieties of pre- and post-Lie algebras play a crucial role
in the definition of any pre and post-algebra through black Manin operads
product, see details in \cite{BBGN2012,GubKol2014}. Whereas pre-Lie algebras
are intimately associated with euclidean geometry, post-Lie algebras occur
naturally in the differential geometry of homogeneous spaces, and are also
closely related to Cartan's method of moving frames.
Ebrahimi-Fard, Lundervold and Munthe-Kaas \cite{Fard}
studied universal enveloping algebras of post-Lie algebras and the free
post-Lie algebra.

It has been found recently that in parallel to the close connection between Lie algebra and
associative algebra by taking the commutator bracket, there is a close connection between the
dendriform trialgebra and post-Lie algebra. Moreover, in parallel to regarding the dendriform trialgebra
as a splitting of the associative algebra, the post-Lie algebra is a splitting of the Lie algebra
in a precise sense that applies to all operads \cite{BaiNi}. Additionally, the splitting of operations
can be achieved by the application of the Rota-Baxter operator. So a Rota-Baxter operator on
an associative algebra (resp. Lie algebra) gives rise to a dendriform trialgebra \cite{LodayRonco04} (resp. post-Lie
algebra \cite{Burde16}).
Thus we have the following commutative diagram of categories (the arrows will go in the
opposite direction for the corresponding operads).
\begin{equation}\label{di:postlie}
\begin{array}{l}
\xymatrix{
\begin{array}{c}
\substack{\text{Dendriform}\\ \text{Trialg.}}
\end{array}
\ar[rr]^{x\prec y + x\succ y + x\cdot y}
\ar[dd]^{x\vartriangleright y =x\succ y - y\prec x }_{[x,y]=x\cdot y -y\cdot x} &&
\begin{array}{c}
\substack{\text{Associative}\\ \text{alg.}}
\end{array}
\ar[dd]^{x\star y - y\star x} \\
&&\\
\begin{array}{c}
\substack{\text{Post-Lie}\\ \text{alg.}}
\end{array}
\ar[rr]^{x\vartriangleright y-y\vartriangleright x+[x,y]} &&
\begin{array}{c}
\substack{\text{Lie} \\ \text{alg.}}
\end{array}}
\end{array}
\end{equation}
In this paper we use   weighted $\mathcal{O}$-operators to split operations, although a generalization exists in the alternative setting in terms of bimodules. Diagram \eqref{eq:diagb} summarizes the results of the present work.
\begin{align}
\label{eq:diagb}
\begin{array}{l}
\xymatrix@!0{
&&&
\begin{array}{c}
\substack{\text{Post-Malcev} \\ \text{alg.}}
\end{array}
\ar@{-}[rrrrrr]\ar@{-}'[d][dd]
&&& &&&
\begin{array}{c}
\substack{\text{Post-alternative} \\ \text{alg.}}
\end{array}
\ar@{-}[dd]
\\
\begin{array}{c}
\substack{\text{Post-Lie} \\ \text{alg.}}
\end{array}
\ar@{-}[urrr]\ar@{-}[rrrrrr]\ar@{-}[dd]
&&&&& &
\begin{array}{c}
\substack{\text{Dendriform} \\ \text{trialg.}}
\end{array}
\ar@{-}[urrr]\ar@{-}[dd]&&&
\\
&&&
\begin{array}{c}
\substack{\text{Malcev} \\ \text{alg.}}
\end{array}
\ar@{-}'[rrr][rrrrrr]
&&& &&&
\begin{array}{c}
\substack{\text{Alternative}\\ \text{alg.}}
\end{array}
\\
\begin{array}{c}
\substack{\text{Lie} \\ \text{alg.}}
\end{array}
\ar@{-}[rrrrrr]\ar@{-}[urrr]
&&&&&&
\begin{array}{c}
\substack{\text{Associative} \\ \text{alg.}}
\end{array}
\ar@{-}[urrr] &&&&
}
\end{array}
\end{align}



In Section 2, we study the relationship between  $\mathcal{O}$-operators and Malcev Yang-Baxter equation.
We construct in section 3 alternative algebras structure associated to any post-alternative algebra. The multiplication is
given by
$$x\star y = x \prec y + y \succ x + x\c y .$$
In addition, we investigate the notion of a weighted $\mathcal{O}$-operator   to construct a post-alternative algebra structure on the $A$-bimodule $\mathbb{K}$-algebra of an alternative algebra $(A, \c)$.
The Last section is devoted to introduce the notion of post-Malcev algebra and we show that  weighted   $\mathcal{O}$-operators can be used to construct post-Malcev algebras.
We also reveal
a relation between post-Malcev algebras and post-alternative algebras by the
commutative diagram \eqref{di:postmalcev}.

$\textbf{Convention and notation.}$
Throughout this paper, all algebras are finite-dimensional and over a field  $\mathbb{K}$ of
characteristic zero.

Let $(A, \c)$ be an algebra. We use the following notations.
\begin{description}[labelindent=5pt,style=multiline,leftmargin=1cm]
  \item[(a)] Denote the left and right multiplication operator by $L_{\c}(x)$ and $R_{\c}(x)$  respectively, that is,
$L_{\c}(x)y = R_{\c}(y)x = x\c y$ for any $x, y\in A$. We also simply denote them by $L(x)$ and $R(x)$
respectively without confusion. In particular, if $(A, [\c, \c])$ is a Malcev algebra, we let $ad_{[\c,\c ]}(x) = ad(x)$
denote the adjoint operator, that is, $ad_{[\c,\c]}(x)y = ad(x)y = [x, y]$ for any $x, y\in A$.
\item[(b)] Let $r=\sum\limits_i x_i\otimes y_i\in A^{\otimes 2}$, and  denote
\begin{equation}
r_{12}=\sum_ix_i\otimes y_i\otimes 1,\quad r_{13}=\sum_{i}x_i\otimes
1\otimes y_i,\quad r_{23}=\sum_i1\otimes x_i\otimes y_i,
\label{eq:r12}
\end{equation}
where 1 is a unit element if $(A,\c)$ is unital or a symbol playing a similar role of
the unit for the nonunital cases. 
\item[(c)] Let $V$ be a vector space. The {\bf twisting operator}
$\sigma: V^{\otimes 2} \rightarrow V^{\otimes 2}$ is defined for all $a, b\in V$ by
$$
\sigma (a \otimes b) = b\otimes a.
$$
We call $r=\sum\limits_i a_i\otimes b_i\in
V^{\otimes 2}$ {\bf skew-symmetric} (resp. {\bf symmetric}) if
$r=-\sigma(r)$ (resp. $r=\sigma(r)$). On the other hand,  $r$ can be regarded as a linear map from  the dual space $V^*$ to $V$ in the following way, for all $a^*,b^*\in V^*$,
\begin{equation}
\langle a^*, r(b^*)\rangle =\langle a^*\otimes b^*,r\rangle, \label{eq:idenrmap}
\end{equation}
where $\langle\cdot , \cdot\rangle$ is the canonical pairing between $V^{*}$ and $V$.
\item[(d)] Let $V_1, V_2$ be two vector spaces and $T : V_1\to V_2$ be a linear map. Denote the dual (linear) map by
$T^{*} : V^{*}_{2} \to V^{*}_{1}$
 defined for all $a_1\in V_1, a^{*}_{2}\in V^{*}_{2}$ by
 \begin{equation}
\langle T^{*}(a^{*}_{2}),a_1\rangle	 = \langle a^{*}_{2},T (a_1)\rangle.
\label{eq:dualTmap}
\end{equation}
On the other hand, $T$ can be identified as an element in $V_{2} \otimes V^{*}_{1}$ by
\begin{equation}
\langle a_{2}^*\otimes a_{1},T\rangle=\langle a_{2}^{*}, T(a_{1})\rangle,\quad
\forall  a_1\in V_1, a^{*}_{2}\in V^{*}_{2}.\label{eq:idenTmap}
\end{equation}
Note that \eqref{eq:idenrmap} is exactly the case $V_1 = V^{*}_{2}$ of \eqref{eq:idenTmap}. Moreover, in
the above sense, any linear map $T : V_1 \to V_2$ is obviously an element in
$(V_{2} \oplus V^{*}_{1})\otimes(V_{2} \oplus V^{*}_{1})$.
 \item[(e)] Let $A$ be an algebra and $V$ be a vector space. For any linear map $\rho : A\rightarrow End(V)$, define a linear map $\rho^{*} : A\to End(V^{*})$, for all
$x\in A, a^*\in V^*, b\in V$, by
\begin{equation}
\langle \r^*(x)a^*,b\rangle=-\langle a^*,\r(x)b\rangle. \label{eq:dual}
\end{equation}
Note that in this case,  $\rho^{*}$
is different from the one given by   \eqref{eq:dualTmap} which
regards $End(V )$ as a vector space, too.
\end{description}
\section{ \texorpdfstring{$\mathcal{O}$}{}-operators and Malcev Yang-Baxter equation}
In this section,
we  recall the classical result that a skew-symmetric solution
of CYBE in a Malcev algebra gives an $\mathcal{O}$-operator through a duality between tensor product and
linear maps. Not every $\mathcal{O}$-operator on a Lie algebra  comes from a solution of CYBE in this way. However, any
$\mathcal{O}$-operator can be recovered from a solution of CYBE in a larger Malcev algebra.

We first
recall the concept of a representation $($see \cite{Kuzmin}$)$ and construct the dual representation.
\begin{df}[\cite{Kuzmin}] \label{df:repmalcevalgebra}
A $\textbf{representation}$ (or $\textbf{a module}$) of a Malcev algebra $(A, [\cdot, \cdot])$ on
a vector space $V$ is a linear map $\rho: A\longrightarrow End(V)$ such that, for all
  $x,y\in A$,
\begin{equation}\label{representation}
\rho([[x, y], z]) = \rho(x)\rho(y)\rho(z) - \rho(z)\rho(x)\rho(y) + \rho(y)\rho([z, x]) - \rho([y, z])\rho(x).
\end{equation}
We denote this  representation  by $(V, \rho)$.
\end{df}
 For all $x,y\in A$, define  the map $ad:A \longrightarrow End(A)$ by
$
ad_{x}(y)=[x,y].
$
Then $ad$ is a representation of the Malcev algebra $(A,[\cdot,\cdot])$ on $A$,  which is called the adjoint representation.

\begin{prop}
\label{pp:dual}
Let $(V,\r)$ be a representation  of a Malcev algebra  $(A,[\c,\c])$.
 Then $(V^*,\r^*)$ defined by    \eqref{eq:dual} is a representation of $A$, which is called the dual representation of $(V,\r)$.
\end{prop}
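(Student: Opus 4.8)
The plan is to verify directly that $\rho^{*}$ satisfies the defining identity \eqref{representation} of a representation, namely
\[
\rho^{*}([[x,y],z]) = \rho^{*}(x)\rho^{*}(y)\rho^{*}(z) - \rho^{*}(z)\rho^{*}(x)\rho^{*}(y) + \rho^{*}(y)\rho^{*}([z,x]) - \rho^{*}([y,z])\rho^{*}(x),
\]
by pairing both sides against an arbitrary $a^{*}\in V^{*}$ and $b\in V$ and using the definition \eqref{eq:dual}. First I would record the elementary translation rule: for any linear maps $S,T\in \mathrm{End}(V)$, the dual of a composite reverses order and introduces signs, so that $\langle (\rho^{*}(x_1)\cdots\rho^{*}(x_k))a^{*}, b\rangle = (-1)^{k}\langle a^{*}, (\rho(x_k)\cdots\rho(x_1))b\rangle$. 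In particular $\langle \rho^{*}([[x,y],z])a^{*},b\rangle = -\langle a^{*},\rho([[x,y],z])b\rangle$.

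Next I would apply this rule term-by-term to the right-hand side. The degree-three terms $\rho^{*}(x)\rho^{*}(y)\rho^{*}(z)$ and $\rho^{*}(z)\rho^{*}(x)\rho^{*}(y)$ each pick up a factor $(-1)^{3}=-1$ and get their arguments reversed, becoming $-\langle a^{*},\rho(z)\rho(y)\rho(x)b\rangle$ and $-\langle a^{*},\rho(y)\rho(x)\rho(z)b\rangle$ respectively; the degree-two terms $\rho^{*}(y)\rho^{*}([z,x])$ and $\rho^{*}([y,z])\rho^{*}(x)$ pick up $(-1)^{2}=+1$ and become $\langle a^{*},\rho([z,x])\rho(y)b\rangle$ and $\langle a^{*},\rho(x)\rho([y,z])b\rangle$. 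Collecting everything, proving the dual identity reduces to checking that
\[
\rho([[x,y],z]) = \rho(z)\rho(y)\rho(x) - \rho(y)\rho(x)\rho(z) + \rho(x)\rho([y,z]) - \rho([z,x])\rho(y)
\]
holds as an identity in $\mathrm{End}(V)$ for all $x,y,z\in A$.

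The remaining step — and the only place where any real work happens — is to see that this last identity is a consequence of the representation axiom \eqref{representation} itself. The trick is that \eqref{representation} is not literally symmetric under relabeling, so I would exploit two instances of it. Applying \eqref{representation} with the substitution $(x,y,z)\mapsto(z,y,x)$ gives $\rho([[z,y],x]) = \rho(z)\rho(y)\rho(x) - \rho(x)\rho(z)\rho(y) + \rho(y)\rho([x,z]) - \rho([y,x])\rho(z)$; using anti-symmetry of the bracket ($[[z,y],x] = -[[y,z],x]$, etc.) and comparing with the original \eqref{representation}, together with the Malcev/Sagle relations that constrain the $\rho$-images, one deduces the required reshuffled form. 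I expect this to be the main obstacle: one has to combine the permuted copies of \eqref{representation} with anti-symmetry in just the right linear combination, and possibly invoke the identity $\rho([[x,y],z]) + \rho([[y,z],x]) + \rho([[z,x],y])$ evaluated via \eqref{representation} (the "image of the Jacobian" relation) to close the computation. Once that algebraic identity among the $\rho$'s is established, pairing back against $a^{*}$ and $b$ and reversing the translation rule yields \eqref{representation} for $\rho^{*}$, completing the proof.
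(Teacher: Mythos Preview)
Your overall strategy --- pair against $a^{*}\in V^{*}$ and $b\in V$, use the rule $\langle \rho^{*}(x_1)\cdots\rho^{*}(x_k)a^{*},b\rangle=(-1)^{k}\langle a^{*},\rho(x_k)\cdots\rho(x_1)b\rangle$, and reduce the dual identity to the $\mathrm{End}(V)$-identity
\[
\rho([[x,y],z]) = \rho(z)\rho(y)\rho(x) - \rho(y)\rho(x)\rho(z) + \rho(x)\rho([y,z]) - \rho([z,x])\rho(y)
\]
--- is exactly the paper's approach, and your reduction is carried out correctly.

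Where you go astray is in the final step. You propose the substitution $(x,y,z)\mapsto(z,y,x)$ and then anticipate having to combine several permuted instances of \eqref{representation}, invoke Malcev/Sagle relations, and use the ``image of the Jacobian'' identity. None of that is needed. The paper simply applies \eqref{representation} with $x$ and $y$ interchanged, i.e.\ the substitution $(x,y,z)\mapsto(y,x,z)$:
\[
\rho([[y,x],z]) = \rho(y)\rho(x)\rho(z) - \rho(z)\rho(y)\rho(x) + \rho(x)\rho([z,y]) - \rho([x,z])\rho(y),
\]
and then uses anti-symmetry of the bracket on $[[y,x],z]$, $[z,y]$, and $[x,z]$; multiplying through by $-1$ gives precisely your target identity in one line. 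So the ``main obstacle'' you flagged is not an obstacle at all: the correct permutation makes the identity immediate, with no appeal to the Malcev structure beyond skew-symmetry.
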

\begin{proof}
By   \eqref{representation}, we have, for $x,y,z\in A$,
\begin{equation*}
\rho([[y, x], z])=-\rho([[x, y], z]) = \rho(z)\rho(y)\rho(x) - \rho(y)\rho(x)\rho(z) - \rho(x)\rho([z, y]) + \rho([x, z])\rho(y).
\end{equation*}
So, for any $x,y,z\in A$, $a^{*}\in V^{*}$, $ b\in V$, we have
\begin{align*}
&\Big\langle \r^*([[x,y],z])a^*,b\Big\rangle =-\Big\langle a^*,\r([[x,y],z])b\Big\rangle=-\Big\langle a^*,-\r([[y,x],z])b\Big\rangle\\
&=-\Big\langle a^*,\big(\rho(y)\rho(x)\rho(z) - \rho(z)\rho(y)\rho(x) + \rho(x)\rho([z, y]) - \rho([x, z])\rho(y)\big)b\Big\rangle\\
&=-\Big\langle \big(-\rho^{*}(z)\rho^{*}(x)\rho^{*}(y) + \rho^{*}(x)\rho^{*}(y)\rho^{*}(z) + \rho^{*}([z, y])\rho^{*}(x) - \rho^{*}(y)\rho^{*}([x, z])\big)a^*,b\Big\rangle.
\end{align*}
Hence, since $\langle\cdot,\cdot\rangle$ is nondegenerate, we obtain
\begin{equation*}
\r^*([[x,y],z])=\rho^{*}(z)\rho^{*}(x)\rho^{*}(y) - \rho^{*}(x)\rho^{*}(y)\rho^{*}(z) + \rho^{*}([y, z])\rho^{*}(x) - \rho^{*}(y)\rho^{*}([z, x]).
\qedhere
\end{equation*}
\end{proof}


\begin{df}
Let $(A, [\c,\c])$ be a Malcev algebra and $r\in A \otimes A$. $r$ is called a solution of
Malcev Yang-Baxter equation in $A$ if $r$ satisfies
\begin{equation}
[r_{12},r_{13}]+[r_{12},r_{23}]+[r_{13},r_{23}]= 0,
\label{rslt}
\end{equation}
where
\begin{align*}
[r_{12},r_{13}]&=\sum_{i,j}[x_i,x_j]\otimes y_i\otimes
y_j,\\ [r_{13}, r_{23}]&=\sum_{i,j}x_i\otimes x_j\otimes
[y_i, y_j],\\ [r_{12}, r_{23}]&=\sum_{i,j}x_i\otimes
[y_i,  x_j]\otimes y_j.
\end{align*}
\end{df}
Equation \eqref{rslt} gives the tensor form of  Malcev Yang-Baxter equation. What we will do next is
to replace the tensor form by a linear operator satisfying some conditions.
\begin{thm}\label{cybe==dualoop}
Let $(A, [\c,\c])$ be a Malcev algebra and $r\in A \otimes A$. Then $r$ is
a skew-symmetric solution of Malcev Yang-Baxter equation in $A$ if and only if  $r$ satisfies for all $x^{*}, y^{*}\in A^{*}$,
\begin{equation}
[r(x^{*}), r(y^{*})] = r(ad^{*}r(x^{*})(y^{*}) - ad^{*}r(y^{*})(x^{*})).
\label{rOp}
\end{equation}
\end{thm}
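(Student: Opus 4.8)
The plan is to translate the tensor identity \eqref{rslt} into the operator identity \eqref{rOp} by pairing both sides with arbitrary covectors, using the identification of $r\in A\otimes A$ with a linear map $r\colon A^*\to A$ given by \eqref{eq:idenrmap}. First I would fix $x^*,y^*,z^*\in A^*$ and compute the pairing $\langle x^*\otimes y^*\otimes z^*,\, [r_{12},r_{13}]+[r_{12},r_{23}]+[r_{13},r_{23}]\rangle$ term by term. Writing $r=\sum_i x_i\otimes y_i$, one gets contributions of the form $\sum_{i,j}\langle x^*,[x_i,x_j]\rangle\langle y^*,y_i\rangle\langle z^*,y_j\rangle$ and its two cyclic-type analogues; the point is that each such double sum collapses, via \eqref{eq:idenrmap}, into an expression built from $r(y^*)$, $r(z^*)$ and a single remaining pairing against $x^*$. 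The key bookkeeping step is to move the bracket off $r$-values and onto the covectors using the dual of the adjoint representation: $\langle x^*,[r(y^*),b]\rangle = \langle x^*, ad(r(y^*))b\rangle = -\langle ad^*(r(y^*))x^*, b\rangle$, and similarly for the bracket on the other side. Because $r$ is skew-symmetric, $\langle a^*, r(b^*)\rangle = -\langle b^*, r(a^*)\rangle$, which is what lets the three tensor terms recombine symmetrically rather than producing six unrelated terms.

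Carrying this out, I expect the pairing of the MYBE tensor with $x^*\otimes y^*\otimes z^*$ to equal, up to an overall sign and relabeling,
\[
\Big\langle x^*,\ [r(y^*),r(z^*)] - r\big(ad^*r(y^*)(z^*) - ad^*r(z^*)(y^*)\big)\Big\rangle .
\]
Since $x^*$ is arbitrary and the pairing is nondegenerate, the vanishing of the left-hand side for all $x^*,y^*,z^*$ is then equivalent to \eqref{rOp} (after renaming $y^*,z^*$ to $x^*,y^*$). Conversely, reading the same chain of equalities backwards shows \eqref{rOp} forces \eqref{rslt}; skew-symmetry of $r$ must be invoked in both directions, so the equivalence is genuinely an "if and only if" only under that hypothesis. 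I would present the computation as a single displayed chain of equalities for one representative term and then indicate that the other two terms are handled identically after a cyclic relabeling of $\{x^*,y^*,z^*\}$.

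The main obstacle is purely combinatorial: keeping the signs straight. There are three sources of signs — the skew-symmetry $r=-\sigma(r)$, the minus sign in the definition \eqref{eq:dual} of $\rho^*$ (here $ad^*$), and the antisymmetry of the bracket itself — and they interact when one tries to match the cyclic pattern of the three terms $[r_{12},r_{13}],[r_{12},r_{23}],[r_{13},r_{23}]$ (note the bracket in the middle term sits in the second slot, not the first, which is the asymmetry one has to absorb). A secondary subtlety is that \eqref{rOp} as written is not manifestly skew-symmetric in $x^*,y^*$, whereas the tensor equation treats the three legs on a more equal footing; one should check that the skew-symmetry of $r$ makes the operator identity automatically symmetric under $x^*\leftrightarrow y^*$, so that pairing against $x^*\otimes y^*\otimes z^*$ versus $y^*\otimes x^*\otimes z^*$ does not yield an independent constraint. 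Once the sign conventions are pinned down this is a direct, if slightly tedious, verification; no deeper structural input beyond \eqref{representation} and the definitions in Convention (b)–(e) is needed.
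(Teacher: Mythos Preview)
Your proposal is correct and complete in outline; the sign bookkeeping you flag is indeed the only real work, and the computation of each of the three tensor terms will go through exactly as you sketch (the first term gives $\langle x^*,[r(y^*),r(z^*)]\rangle$ directly, the second gives $\langle x^*,r(ad^*r(z^*)(y^*))\rangle$, and the third, after one use of $\langle a^*,r(b^*)\rangle=-\langle b^*,r(a^*)\rangle$, gives $-\langle x^*,r(ad^*r(y^*)(z^*))\rangle$).

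The paper takes a slightly different route: rather than pairing abstractly with $x^*\otimes y^*\otimes z^*$ and manipulating via \eqref{eq:idenrmap} and \eqref{eq:dual}, it fixes a basis $\{e_i\}$ of $A$ with structure constants $c_{ij}^p$, writes $r=\sum a_{ij}e_i\otimes e_j$, expands $[r_{12},r_{13}]+[r_{12},r_{23}]+[r_{13},r_{23}]$ explicitly in the $e_p\otimes e_j\otimes e_l$ basis, and then separately expands \eqref{rOp} on basis covectors $e_j^*,e_l^*$; both sides reduce to the same scalar condition $\sum_{i,k}\big(a_{ij}a_{kl}c_{ik}^p+a_{kp}a_{ij}c_{ki}^l+a_{pi}a_{kl}c_{ik}^j\big)=0$. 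The two arguments are equivalent---yours is the coordinate-free version of theirs---but the paper's basis approach makes skew-symmetry a single explicit substitution $a_{ij}=-a_{ji}$ rather than repeated invocations of $r^*=-r$, while your approach avoids the index-juggling and makes the structural reason for the equivalence more transparent. One small correction: neither argument actually uses \eqref{representation}; only the \emph{definition} of $ad^*$ via \eqref{eq:dual} enters, so you can drop that reference.
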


\begin{proof}
 Let $\{e_i,...,e_n\}$ be a basis of $A$ and
$\{e_i^*,...,e_n^*\}$ be its dual basis. Suppose that $[e_i,
e_j]=\sum\limits_p c_{ij}^p e_p$ and $r=\sum_{i,j}{a_{ij}e_i\otimes
e_j}$. Hence $a_{ij}=-a_{ji}$.
Now, we have
\begin{align*}
\left[r_{12},r_{13}\right]&=\Big[\sum_{i,j}a_{ij}e_i\otimes
e_j\otimes 1,\sum_{k,l}a_{kl}e_k\otimes 1\otimes
e_l\Big]
=\sum_{i,j,k,l,p}a_{ij}a_{kl}c_{ik}^{p} e_p\otimes e_j\otimes e_l,
\\
\left[r_{13},r_{23}\right] &=\Big[\sum_{i,j}a_{ij}e_i\otimes 1\otimes
e_j,\sum_{k,l}a_{kl}1\otimes e_k\otimes
e_l\Big]=\sum_{i,j,k,l,p}a_{ij}a_{kl}c_{jl}^p e_i \otimes e_k\otimes e_p,
\\
\left[r_{12},r_{23}\right]&=\Big[\sum_{i,j}a_{ij}e_i\otimes e_j\otimes 1,\sum_{k,l}a_{kl}1\otimes e_k\otimes
e_l\Big] =\sum_{i,j,k,l,p}a_{ij}a_{kl}c_{jk}^p e_i \otimes e_p\otimes e_l.
\end{align*}
Then $r$ is a solution of the
Malcev Yang-Baxter equation in $(A,[\c,\c])$ if and only if (for
any $j,p,l$)
$$\sum_{i,k}\Big({a_{ij}a_{kl}c_{ik}^p+ a_{kp}a_{ij}c_{ki}^l+ a_{pi}a_{kl}c_{ik}^j}\Big)e_p\otimes e_j\otimes e_l=0.$$
On the other hand, by \eqref{eq:idenrmap}, we get
$ r(e_j^*)=\sum\limits_i a_{ij}e_i.$
Then, if we set $x^{*}=e_j^{*}$ and $y^{*}=e_l^{*}$, by   \eqref{rOp},
$$\sum_{i,k}\Big({a_{ij}a_{kl}c_{ik}^p+ a_{kp}a_{ij}c_{ki}^l+ a_{pi}a_{kl}c_{ik}^j}\Big)e_p=0.$$
Therefore, it is easy to see that $r$ is a solution of  Malcev Yang-Baxter equation in $A$ if and only
if $r$ satisfies \eqref{rOp}.
\end{proof}
\begin{df}
 Let $(A, [\c,\c])$ be a Malcev algebra. A symmetric bilinear form
$B$ on $A$ is called $\emph{invariant}$ if,
for all $x, y, z \in A$,
\begin{equation}
 B([x, y], z) = B(x, [y, z]).
 \end{equation}
\end{df}
\begin{df}
Let $(A, [\c,\c])$ be a Malcev algebra.
 A Rota-Baxter operator of  weight $0$ on  $A$ is
a linear map $\mathcal{R}: A\to A$ satisfying for all $x,y\in A,$
$$[\mathcal{R}(x),\mathcal{R}(y)] = \mathcal{R}([\mathcal{R}(x), y] + [x, \mathcal{R}(y)]).$$
\end{df}
\begin{cor}
Let $(A, [\c,\c])$ be a Malcev algebra and $r\in A \otimes A$. Assume $r$ is skew-symmetric
and there exists a nondegenerate symmetric invariant bilinear form $B$ on $A$. Define a linear
map $\varphi : A\to A^{\ast}$ by $\langle \varphi(x),y\rangle = B(x, y)$ for any $x, y\in A$. Then $r$ is a solution of the Malcev
Yang-Baxter equation in $A$ if and only if  $\mathcal{R} = r\varphi : A\to A$ is a Rota-Baxter operator.
\end{cor}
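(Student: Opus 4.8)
The plan is to deduce the corollary from Theorem~\ref{cybe==dualoop} by transporting the $\mathcal{O}$-operator identity \eqref{rOp} across $\varphi$ and recognizing the result as the Rota-Baxter identity. First I would note that, since $B$ is symmetric and nondegenerate, $\varphi\colon A\to A^{*}$ is a linear isomorphism with $\langle\varphi(x),y\rangle=B(x,y)=B(y,x)=\langle\varphi(y),x\rangle$ for all $x,y\in A$; in particular every covector is uniquely of the form $\varphi(x)$, and by construction $r(\varphi(x))=\mathcal{R}(x)$.

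The one substantive point is an intertwining property: $\varphi([x,y])=ad^{*}(x)\varphi(y)$ for all $x,y\in A$, i.e. $\varphi$ carries the adjoint representation of $A$ to its dual. To prove it, pair both sides with an arbitrary $z\in A$; using \eqref{eq:dual},
\[
\langle ad^{*}(x)\varphi(y),z\rangle=-\langle\varphi(y),[x,z]\rangle=-B(y,[x,z]),
\]
and the invariance of $B$ together with the antisymmetry of the bracket give $-B(y,[x,z])=-B([y,x],z)=B([x,y],z)=\langle\varphi([x,y]),z\rangle$, so nondegeneracy of $B$ yields the claim.

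Then I would substitute $x^{*}=\varphi(x)$ and $y^{*}=\varphi(y)$ into \eqref{rOp}. The left-hand side becomes $[r(\varphi(x)),r(\varphi(y))]=[\mathcal{R}(x),\mathcal{R}(y)]$, while the intertwining property gives $ad^{*}r(\varphi(x))(\varphi(y))=ad^{*}(\mathcal{R}(x))\varphi(y)=\varphi([\mathcal{R}(x),y])$ and, symmetrically, $ad^{*}r(\varphi(y))(\varphi(x))=\varphi([\mathcal{R}(y),x])$, so the right-hand side is
\[
r\big(\varphi([\mathcal{R}(x),y])-\varphi([\mathcal{R}(y),x])\big)=r\varphi\big([\mathcal{R}(x),y]-[\mathcal{R}(y),x]\big)=\mathcal{R}\big([\mathcal{R}(x),y]+[x,\mathcal{R}(y)]\big),
\]
using antisymmetry once more. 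Hence \eqref{rOp} for the pair $(\varphi(x),\varphi(y))$ is precisely the weight $0$ Rota-Baxter identity for $(x,y)$; since $\varphi$ is surjective, \eqref{rOp} holds for all $x^{*},y^{*}\in A^{*}$ if and only if $\mathcal{R}=r\varphi$ is a Rota-Baxter operator of weight $0$. By Theorem~\ref{cybe==dualoop} the former is equivalent to $r$ being a skew-symmetric solution of the Malcev Yang-Baxter equation, which proves the corollary.

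I do not expect a genuine obstacle: the content is the intertwining computation and the rest is bookkeeping. The main things to be careful about are the sign convention in \eqref{eq:dual} and the repeated use of the antisymmetry of $[\cdot,\cdot]$, and it is worth stating explicitly where each hypothesis on $B$ is used --- nondegeneracy to invert $\varphi$, invariance (with antisymmetry) for the intertwining, and symmetry so that $\varphi$ is the symmetric identification compatible with the skew-symmetry of $r$.
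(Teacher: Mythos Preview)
Your proposal is correct and follows essentially the same route as the paper: establish the intertwining $\varphi([x,y])=ad^{*}(x)\varphi(y)$ from invariance of $B$, then substitute $x^{*}=\varphi(x)$, $y^{*}=\varphi(y)$ into \eqref{rOp} and identify the result with the weight-$0$ Rota-Baxter identity via Theorem~\ref{cybe==dualoop}. Your write-up is in fact a bit more explicit than the paper's about where nondegeneracy, symmetry and antisymmetry are each invoked, which is fine.
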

\begin{proof}
For any $x, y\in A$, we have
$$
\langle \varphi(ad(x)y), z\rangle=B([x,y],z)=B(z,[x,y])=-B(y,[x,z])=\langle ad^{*}(x)\varphi(y),z\rangle.$$
Hence $\varphi(ad(x)y)=ad^{*}(x)\varphi(y)$ for any $x, y\in A$.
Let $x^{*}=\varphi(x)$, $y^{*}=\varphi(y)$, then by Theorem \ref{cybe==dualoop}, $r$ is a solution of the Malcev Yang-Baxter
equation in $A$ if and only if
$$
[r\varphi(x), r\varphi(y)] = [r(x^*),r(y^*)] = r(ad^{*}r(x^{*})(y^{*}) - ad^{*}r(y^{*})(x^{*})) = r\varphi\big([r\varphi(x),y]+[x,r\varphi(y)]\big).$$
Therefore the conclusion holds.
\end{proof}
Now, we introduce the notion of  $\mathcal{O}$-operator of a Malcev algebra.
\begin{df}
Let $(A, [\c,\c])$ be a Malcev algebra and let $(V,\rho)$ be a representation of $A$. A linear map $T : V \to  A$ is called an $\mathcal{O}$-operator  associated to  $\rho$ if
for all $a, b \in V,$
\begin{align}
[T (a), T (b)] = T \big(\rho(T (a))b - \rho(T (b))a\big).
 \end{align}
\end{df}
\begin{ex}
 Let $(A, [\c,\c])$ be a Malcev algebra. Then a Rota-Baxter operator
(of weight zero) is an $\mathcal{O}$-operator of $A$ associated to the adjoint representation
$(A, ad)$ and a skew-symmetric solution of Malcev Yang-Baxtezr equation in $A$ is an $\mathcal{O}$-operator of $A$
associated to the representation $(A^{\ast}, ad^{\ast})$.
\end{ex}
Let $(A, [\c,\c])$ be a Malcev algebra.  Let $\rho^*:A\rightarrow gl(V^*)$ be the dual representation
of the representation $\rho: A\rightarrow gl(V)$ of the Malcev
algebra $A$.
 A linear map $T : V\to A$ can be identified as an
element in $ A\otimes V^*\subset (A\ltimes_{\rho^*}V^*)\otimes (A\ltimes_{\rho^*}V)
$
as follows.
Let $\{e_1,\cdots,e_n\}$ be a basis of $A$. Let
$\{v_1,\cdots, v_m\}$ be a basis of $V$ and $\{ v_1^*,\cdots,
v_m^*\}$ be its dual basis, that is $v_i^*(v_j)=\delta_{ij}$. Set
$T(v_i)=\sum\limits_{j=1}^na_{ij}e_j, i=1,\cdots, m$. Since as
vector spaces,
${\rm Hom}(V,A)\cong A\otimes V^*,$ we
have
\begin{align}\label{Tidentification}
T&=\sum_{i=1}^m T(v_i)\otimes v_i^*
=\sum_{i=1}^m\sum_{j=1}^n a_{ij}e_j\otimes v_i^*\nonumber\\
&\in A\otimes V^*\subset (A\ltimes_{\rho^*}V^*)\otimes (A\ltimes_{\rho^*}V^*).
\end{align}
\begin{thm}\label{cybe==oop}
Let $(A, [\c,\c])$ be a Malcev algebra. Then $T$ is an $\mathcal{O}$-operator of $A$ associated to $(V,\rho)$
if and only if $r = T -\sigma(T)$ is a skew-symmetric solution of the  Malcev Yang-Baxter equation
in $A\ltimes_{\rho^*}V^{\ast}.$
\end{thm}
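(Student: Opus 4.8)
The plan is to reduce Theorem~\ref{cybe==oop} to Theorem~\ref{cybe==dualoop} applied to the semi-direct product Malcev algebra $\hat A := A\ltimes_{\rho^*}V^*$. First I would note that, via the identification \eqref{Tidentification}, the tensor $T\in A\otimes V^*\subset \hat A\otimes\hat A$ gives rise to the skew-symmetric element $r=T-\sigma(T)\in\hat A\otimes\hat A$; since $r$ is skew-symmetric by construction, Theorem~\ref{cybe==dualoop} tells us that $r$ solves the Malcev Yang--Baxter equation in $\hat A$ if and only if
\begin{equation*}
[r(\xi),r(\eta)]_{\hat A}=r\big(\mathrm{ad}^*_{\hat A}r(\xi)(\eta)-\mathrm{ad}^*_{\hat A}r(\eta)(\xi)\big)
\end{equation*}
for all $\xi,\eta\in\hat A^*=A^*\oplus V$. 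So the entire content of the theorem is the claim that this operator identity on $\hat A^*$ is equivalent to the $\mathcal O$-operator identity $[T(a),T(b)]=T(\rho(T(a))b-\rho(T(b))a)$ on $V$.

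The key computational step is to unwind the map $r=T-\sigma(T):\hat A^*\to\hat A$ explicitly. For $\xi=x^*+a$ with $x^*\in A^*$, $a\in V$, one computes from \eqref{eq:idenrmap} and \eqref{Tidentification} that $r(x^*+a)=T(a)-T^*(x^*)$, where $T^*:A^*\to V^*\subset\hat A$ is the dual map (note $T(a)\in A$ and $T^*(x^*)\in V^*$, so the two summands land in complementary pieces of $\hat A$). Then I would insert this into both sides of the displayed identity above, using the bracket of the semi-direct product $\hat A$ (namely $[x+u,y+v]_{\hat A}=[x,y]+\rho^*(x)v-\rho^*(y)u$ for $x,y\in A$, $u,v\in V^*$) and the formula for $\mathrm{ad}^*_{\hat A}$ on $\hat A^*$. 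Projecting the resulting equality onto the two summands $A$ and $V^*$ of $\hat A$ separately yields two identities; the $A$-component, after a short manipulation, is exactly the $\mathcal O$-operator equation, while the $V^*$-component should reduce to a consequence of the representation axiom \eqref{representation} and Proposition~\ref{pp:dual} (equivalently, it follows automatically once the $A$-component holds, together with the compatibility between $\rho$, $\rho^*$ and $T$). It is cleanest to carry this out in the basis $\{e_i\}$, $\{v_i\}$ already set up before the theorem, matching coefficients as in the proof of Theorem~\ref{cybe==dualoop}.

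The main obstacle I anticipate is bookkeeping rather than conceptual: the Malcev (Sagle) identity is quartic, so the semi-direct product bracket and the dual representation $\rho^*$ interact through triple products of operators, and one must be careful that the cross terms mixing $T(a)\in A$ with $T^*(x^*)\in V^*$ cancel correctly when projecting onto each component. In particular, verifying that the $V^*$-component of the identity is automatically satisfied — i.e.\ that no extra constraint beyond the $\mathcal O$-operator equation is imposed — is where the representation axiom for $(V^*,\rho^*)$ from Proposition~\ref{pp:dual} must be invoked, and getting the signs right there (because of the minus sign in \eqref{eq:dual}) is the delicate point. Once that is done, the equivalence follows, and the skew-symmetry of $r$ is immediate from $r=T-\sigma(T)$, completing the proof.
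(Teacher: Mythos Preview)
Your approach is correct and genuinely different from the paper's. The paper does not pass through Theorem~\ref{cybe==dualoop} at all: it expands $r=T-\sigma(T)=\sum_i T(v_i)\otimes v_i^*-v_i^*\otimes T(v_i)$ and computes the three brackets $[r_{12},r_{13}]$, $[r_{12},r_{23}]$, $[r_{13},r_{23}]$ directly in $A\ltimes_{\rho^*}V^*$. After using $\rho^*(T(v_k))v_i^*=-\sum_j v_i^*(\rho(T(v_k))v_j)v_j^*$ to rewrite the cross terms, the sum collapses to
\[
\sum_{i,k}\Big(\big([T(v_i),T(v_k)]-T(\rho(T(v_i))v_k)+T(\rho(T(v_k))v_i)\big)\otimes v_i^*\otimes v_k^*\ +\ \text{two symmetric variants}\Big),
\]
so the MYBE holds iff the $\mathcal O$-operator defect vanishes. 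This is a one-shot tensor computation with no need to analyse $\mathrm{ad}^*_{\hat A}$ on $\hat A^*$.

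Your route trades that tensor bookkeeping for the coadjoint calculus on the semi-direct product, which is more conceptual and reuses Theorem~\ref{cybe==dualoop}. Two remarks on your anticipated obstacles. First, nothing ``quartic'' from the Sagle identity enters: both sides of the operator identity are bilinear in $(\xi,\eta)$, so it suffices to check the three type-pairs $(V,V)$, $(V,A^*)$, $(A^*,A^*)$. Second, the $V^*$-component (equivalently the $(V,A^*)$ case) does \emph{not} need the representation axiom from Proposition~\ref{pp:dual}; a direct pairing computation shows it is literally the $\mathcal O$-operator equation again, written dually via $T^*$, while the $(A^*,A^*)$ case is trivially zero because $V^*$ is abelian in $\hat A$. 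So the ``extra constraint'' you worry about never appears.
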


\begin{proof}
From \eqref{Tidentification}, we have
$ r = T -\sigma(T) =\sum _{i} T(v^{i})\otimes v^{\ast}_{i}-v^{\ast}_{i}\otimes T(v^{i}). $
Thus,
\begin{eqnarray*}
 &&[r_{12},r_{13}] =\sum_{i,k=1}^m
\{[T(v_i), T(v_k)]\otimes v_i^*\otimes v_k^* -\rho^*
(T(v_i))v_k^*\otimes v_i^*\otimes T(v_k)+ \rho^*
(T(v_k))v_i^*\otimes
T(v_i)\otimes v_k^* \},\\
&&[ r_{12},r_{23}] =\sum_{i,k=1}^m \{- v_i^*\otimes [T(v_i), T(v_k)]\otimes
v_k^*
- T(v_i)\otimes \rho^* (T(v_k))v_i^*\otimes v_k^*+ v_i^*\otimes \rho^*
(T(v_i))v_k^*\otimes
T(v_k)\},\\
&&[r_{13},r_{23}] =\sum_{i,k=1}^m \{v_i^*\otimes v_k^*\otimes [T(v_i),
T(v_k)]
+ T(v_i)\otimes v_k^*\otimes \rho^*(T(v_k))v_i^*- v_i^*\otimes T(v_k)\otimes
\rho^* (T(v_i))v_k^*\}.
\end{eqnarray*}
By the definition of dual representation, we know
$\rho^*(T(v_k))v_i^*=-\sum_{j=1}^m v_i^*(\rho(T(v_k))v_j) v_j^*.$
Thus,
\begin{eqnarray*}
&&-\sum_{i,k=1}^m T(v_i)\otimes \rho^*(T(v_k))v_i^*\otimes
v_k^*=-\sum_{i,k=1}^mT(v_i)\otimes
[\sum_{j=1}^m -v_i^*(\rho(T(v_k))v_j) v_j^*]\otimes v_k^*\\
&&=\sum_{i,k=1}^m \sum_{j=1}^m v_j^*(\rho(T(v_k))v_i) T(v_j)\otimes
v_i^*\otimes v_k^*
=\sum_{i,k=1}^m T(\sum_{j=1}^m (v_j^*(\rho(T(v_k))v_i) v_j)\otimes
v_i^*\otimes v_k^* \\
&&=\sum_{i,k=1}^m T(\rho (T(v_k))v_i)\otimes v_i^*\otimes v_k^*.
\end{eqnarray*}
Therefore,
\begin{eqnarray*}
&&[r_{12},r_{13}]+[r_{12},r_{23}]+[r_{13},r_{23}]\\
&&=\sum_{i,k=1}^m\{
([T(v_i),T(v_k)]+T(\rho(T(v_k))v_i)-T(\rho(T(v_i))v_k))\otimes v_i^*\otimes
v_k^*\\
&& -v_i^*\otimes ([T(v_i),T(v_k)]+T(\rho(T(v_k))v_i)-T(\rho(T(v_i))v_k))\otimes
v_k^*\\
&& + v_i^*\otimes v_k^*\otimes
([T(v_i),T(v_k)]+T(\rho(T(v_k))v_i)-T(\rho(T(v_i))v_k))\}.
\end{eqnarray*}
So $r$ is a classical $r$-matrix in  $A\ltimes_{\rho^*}V^*$
if and only if $T$ is an $\mathcal{O}$-operator.
\end{proof}

In fact, Theorem \ref{cybe==oop} gives a relation between  $\mathcal{O}$-operator and Malcev Yang-Baxter equation. Then, we get a direct conclusion from Theorems \ref{cybe==dualoop} and \ref{cybe==oop}.
\begin{cor}
Let $(A, [\c,\c])$ be a Malcev algebra.  Let $\rho:A\rightarrow gl(V)$ be a representation
 of $A$.
 Set
$\widehat{A} = A\ltimes_{\rho^*}V^*.$
  Let $T : V\to A$  be a linear map. Then the following three conditions
are equivalent:
\begin{enumerate}[label*={\rm (\roman*)}]
\item $T$ is a $\mathcal{O}$-operator of $A$ associated to $\rho$;
\item $T -\sigma(T)$ is a skew-symmetric solution of the Malcev Yang-Baxter equation in $\widehat{A}$;
\item $T -\sigma(T)$ is a $\mathcal{O}$-operator of the Malcev algebra $\widehat{A}$ associated to $ad^{*}.$
\end{enumerate}
\end{cor}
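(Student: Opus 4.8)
The plan is to derive this corollary purely as a formal consequence of the two theorems already proved, with no new computation. First I would observe that the equivalence $\text{(i)} \Leftrightarrow \text{(ii)}$ is literally the statement of Theorem \ref{cybe==oop}: $T$ is an $\mathcal{O}$-operator of $A$ associated to $(V,\rho)$ if and only if $r = T - \sigma(T)$ is a skew-symmetric solution of the Malcev Yang-Baxter equation in $\widehat{A} = A\ltimes_{\rho^*}V^*$. So nothing needs to be done there beyond citing it.

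Next I would handle $\text{(ii)} \Leftrightarrow \text{(iii)}$ by applying Theorem \ref{cybe==dualoop} to the Malcev algebra $\widehat{A}$ in place of $A$. That theorem says: for a Malcev algebra, a tensor $\bar r \in \widehat A \otimes \widehat A$ is a skew-symmetric solution of MYBE if and only if, viewed as a map $\bar r : \widehat A^* \to \widehat A$, it satisfies $[\bar r(x^*), \bar r(y^*)] = \bar r\big(ad^*\bar r(x^*)(y^*) - ad^*\bar r(y^*)(x^*)\big)$ for all $x^*, y^* \in \widehat A^*$. Comparing this identity with the definition of an $\mathcal{O}$-operator associated to the adjoint-type representation $(\widehat A^*, ad^*)$ of $\widehat A$ — namely $[S(a),S(b)] = S\big(\rho(S(a))b - \rho(S(b))a\big)$ with $\rho = ad^*$ and $S = \bar r$ — one sees these are the same equation. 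Hence $r = T - \sigma(T)$ is a skew-symmetric solution of MYBE in $\widehat A$ if and only if it is an $\mathcal{O}$-operator of $\widehat A$ associated to $ad^*$. This gives $\text{(ii)} \Leftrightarrow \text{(iii)}$.

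The only point requiring a word of care is that Theorem \ref{cybe==dualoop} is stated for the \emph{skew-symmetric} part of the equivalence, and here $r = T - \sigma(T)$ is manifestly skew-symmetric by construction (since $\sigma^2 = \mathrm{id}$, we have $\sigma(r) = \sigma(T) - T = -r$), so the hypothesis of Theorem \ref{cybe==dualoop} is automatically met; there is nothing to check. One should also note that $ad^*$ here denotes the dual of the adjoint representation of $\widehat A$ in the sense of \eqref{eq:dual}, consistent with its use in Theorem \ref{cybe==dualoop}, so the two occurrences of ``$ad^*$'' in conditions (ii)--(iii) refer to the same operator on $\widehat A^*$.

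I do not expect any genuine obstacle: the corollary is a transitivity argument, $\text{(i)} \Leftrightarrow \text{(ii)}$ from Theorem \ref{cybe==oop} and $\text{(ii)} \Leftrightarrow \text{(iii)}$ from Theorem \ref{cybe==dualoop} applied to $\widehat A$. If anything, the mildly delicate part is purely bookkeeping — making sure that the identification of $T$ as an element of $\widehat A \otimes V^* \subset \widehat A \otimes \widehat A$ from \eqref{Tidentification}, the map $\bar r : \widehat A^* \to \widehat A$ it induces via \eqref{eq:idenTmap}, and the domain $\widehat A^*$ on which $ad^*$ acts, all line up so that the two cited identities are syntactically identical; once that is set up the proof is a one-line chain of ``if and only if''s.
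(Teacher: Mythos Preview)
Your proposal is correct and matches the paper's approach exactly: the paper presents this corollary without proof, simply remarking that it is a direct conclusion from Theorems \ref{cybe==dualoop} and \ref{cybe==oop}, which is precisely the chain $\text{(i)} \Leftrightarrow \text{(ii)}$ via Theorem \ref{cybe==oop} and $\text{(ii)} \Leftrightarrow \text{(iii)}$ via Theorem \ref{cybe==dualoop} applied to $\widehat A$ that you spell out.
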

\section{Alternative and post-alternative algebras}
In this section, we recall some basic definitions about alternative and pre-alternative algebras studied  in  \cite{Kuzmin1,BaiNi}.
\subsection{Some basic results on alternative algebras}

\begin{df}
\label{def:altalg}
  An  $\textbf{alternative algebra}$ $(A,\c)$ is a vector space $A$ equipped with a
bilinear operation $(x,y)\rightarrow x\c y$ satisfying, for all $x,y,z\in A$,
\begin{equation}
as_{A}(x,x,y)=as_{A}(y,x,x)=0,
\end{equation}
where $as_{A}(x,y,z)=(x\c y)\c z-x\c(y\c z)$ is the
$\textbf{associator}$.
\end{df}

\begin{rem}
\label{remequivdef:altalg}
 If the characteristic of the field is not 2, then an alternative
algebra $(A,\c)$ also satisfies the stronger axioms, for all $x, y, z\in A$,
\begin{align}
\label{alt1} as_A (x, y, z) + as_A (y, x, z)& = 0,\\
\label{alt2} as_A (z, x, y) + as_A (z, y, x)& = 0.
\end{align}
\end{rem}
Now, recall that an algebra $(A, \c)$  is called admissible Malcev
algebra if $(A, [\c, \c])$  is a Malcev algebra, where $[x, y] = x \c y - y \c x$.
\begin{ex}
Any alternative algebra is Malcev admissible. That is if $(A,\c)$ be an
alternative algebra, then $(A, [\c,\c])$ is a Malcev algebra, where $[x, y] = x\c y - y\c x$, for
all $x, y\in A$.
\end{ex}
\begin{df}[\cite{Schafer}]
Let $(A, \c)$ be an alternative algebra and $V$ be a vector space.
Let $\mathfrak{l},\mathfrak{r}:A \to End(V)$ be two linear maps. Then, $(V,\mathfrak{l},\mathfrak{r})$ is called a representation or
a bimodule of $A$ if, for any $x, y \in A$,
\begin{align}&\mathfrak{r}(x)\mathfrak{r}(y)+\mathfrak{r}(y)\mathfrak{r}(x)-\mathfrak{r}(x\c y)-\mathfrak{r}(y\c x)=0,\\
 &\mathfrak{l}(x\c y)+\mathfrak{l}(y\c x)-\mathfrak{l}(x)\mathfrak{l}(y)-\mathfrak{l}(y)\mathfrak{l}(x)=0,\\
&\mathfrak{l}(x\c y)+ \mathfrak{r}(y)\mathfrak{l}(x)- \mathfrak{l}(x)\mathfrak{l}(y)-\mathfrak{l}(x)\mathfrak{r}(y)=0,\\ &\mathfrak{r}(y)\mathfrak{l}(x)+\mathfrak{r}(y)\mathfrak{r}(x)-\mathfrak{l}(x)\mathfrak{r}(z)-\mathfrak{r}(x\c y)=0.
\end{align}
\end{df}

\begin{df}
$\textbf{A pre-alternative algebra}$ is a triple $(A,\prec,\succ)$, where $A$ is a vector space,
$\prec,\succ: A \otimes A \rightarrow A$
are bilinear maps   satisfying
for all $x, y, z \in A$ and  $x \c y = x \prec y + x\succ y$,
\begin{eqnarray}
&&(x\succ y) \prec z - x\succ(y \prec z) + (y \prec x) \prec z - y \prec (x \c z) = 0,\\
&&(x\succ y)\prec z - x\succ(y \prec z) + (z \c x)\succ y - z\succ(x\succ y) = 0,\\
&&(x\c y) \succ z - x\succ(y \succ z) + (y \c x) \succ z - y \succ (x \succ z) = 0,\\
&&(z\prec x)\prec y - z\prec(x \star y) + (z \prec y)\prec x - z\prec(y\c x) = 0.
\end{eqnarray}
\end{df}
\begin{prop}
Let $(A,\prec,\succ)$ be a pre-alternative algebra. Then the product  $x \c y = x \prec y + x\succ y$ defines an alternative algebra $A$. Furthermore, $(A, L_\succ, R_\prec)$, where $L_\succ(x)y=x\succ y$ and $ R_\prec(x)y=y\prec x$, gives a representation  of
the associated alternative algebra $(A, \c)$ on $A$.
\end{prop}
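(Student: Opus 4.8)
\emph{Plan of proof.} The statement has two parts, and I would prove them in order: first that $(A,\c)$ with $x\c y=x\prec y+x\succ y$ is alternative, and then that $(A,L_\succ,R_\prec)$ is a bimodule over $(A,\c)$.

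\emph{Step 1: $(A,\c)$ is alternative.} By Definition \ref{def:altalg} it suffices to prove $as_A(x,x,y)=0$ and $as_A(y,x,x)=0$ for all $x,y\in A$ (the base field has characteristic zero, so we may divide by $2$ when needed). I would expand each associator by replacing every occurrence of $\c$ with $\prec+\succ$, which turns $as_A(x,x,y)$ into the eight-term expression
\[
(x\prec x)\prec y+(x\succ x)\prec y+(x\prec x)\succ y+(x\succ x)\succ y-x\prec(x\prec y)-x\prec(x\succ y)-x\succ(x\prec y)-x\succ(x\succ y),
\]
and similarly for $as_A(y,x,x)$. The idea is then to split these eight terms into two blocks, each block being one of the four defining identities of $(A,\prec,\succ)$ specialized to repeated arguments. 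For $as_A(x,x,y)$: the first defining identity evaluated at $(x,x,y)$, together with $x\prec(x\c y)=x\prec(x\prec y)+x\prec(x\succ y)$, shows that five of the eight terms sum to zero, and the three that remain add up to $(x\c x)\succ y-x\succ(x\succ y)$, which vanishes because the third defining identity at $(x,x,y)$ reads $2\big((x\c x)\succ y-x\succ(x\succ y)\big)=0$. For $as_A(y,x,x)$ one argues symmetrically, using the fourth defining identity at $(y,x,x)$ to cancel the three terms involving only $\prec$, and the second defining identity to annihilate the remaining five. This establishes the two alternative identities, so $(A,\c)$ is an alternative algebra; in particular it also satisfies the linearized forms \eqref{alt1} and \eqref{alt2} by Remark \ref{remequivdef:altalg}.

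\emph{Step 2: $(A,L_\succ,R_\prec)$ is a bimodule of $(A,\c)$.} Here there is essentially nothing to compute. Writing $\mathfrak{l}=L_\succ$, $\mathfrak{r}=R_\prec$ (so that $\mathfrak{l}(x)z=x\succ z$ and $\mathfrak{r}(x)z=z\prec x$) and evaluating the four bimodule axioms of an alternative algebra on an arbitrary $z\in A$, a direct substitution turns them into
\[
(z\prec y)\prec x+(z\prec x)\prec y-z\prec(x\c y)-z\prec(y\c x)=0,
\]
\[
(x\c y)\succ z+(y\c x)\succ z-x\succ(y\succ z)-y\succ(x\succ z)=0,
\]
\[
(x\c y)\succ z+(x\succ z)\prec y-x\succ(y\succ z)-x\succ(z\prec y)=0,
\]
\[
(x\succ z)\prec y+(z\prec x)\prec y-x\succ(z\prec y)-z\prec(x\c y)=0.
\]
Up to renaming the free variables, these are precisely the four defining identities of the pre-alternative algebra $(A,\prec,\succ)$ (the four bimodule axioms corresponding, respectively, to the fourth, third, second and first pre-alternative identity). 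Hence $(A,L_\succ,R_\prec)$ is a representation of $(A,\c)$ on $A$.

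\emph{Main obstacle.} The argument is entirely bookkeeping; there is no conceptual difficulty. The only delicate points are: in Step 1, choosing the grouping of the eight associator terms so that each block is literally a specialization of one of the four pre-alternative identities (and remembering the factor $2$ coming from the repeated argument); and in Step 2, keeping straight which relabeling of variables matches which bimodule axiom to which defining identity. Once that dictionary is fixed, both parts follow immediately.
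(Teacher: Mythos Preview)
The paper does not supply its own proof of this proposition (it is stated as a known fact, coming from \cite{BaiNi}), so there is nothing to compare your approach against. Your Step~2 is entirely correct: evaluating the four bimodule axioms on a generic $z$ literally reproduces the four pre-alternative identities, so $(A,L_\succ,R_\prec)$ is a representation of $(A,\c)$. Your treatment of $as_A(x,x,y)$ in Step~1 is also correct: identity~(1) at $(x,x,y)$ is genuinely a five-term relation killing five of the eight summands, and identity~(3) at $(x,x,y)$, after dividing by~$2$, kills the remaining three.

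There is, however, a concrete bookkeeping error in your handling of $as_A(y,x,x)$. After identity~(4) at $(y,x,x)$ removes the three ``$\prec$-only'' terms, the five that remain are
\[
(y\succ x)\prec x+(y\prec x)\succ x+(y\succ x)\succ x-y\succ(x\prec x)-y\succ(x\succ x),
\]
and you claim a single instance of identity~(2) annihilates them. This is impossible: in any specialization of identity~(2), the five expanded monomials have their leftmost factor split between the variables playing the roles of $x$ and of $z$ (two and three occurrences, respectively), whereas all five terms above have $y$ as the leftmost factor. In particular, identity~(2) at $(y,x,x)$ produces $(x\cdot y)\succ x-x\succ(y\succ x)$ rather than $(y\cdot x)\succ x-y\succ(x\succ x)$. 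The repair is easy but does require more than one identity: for instance, work instead with the linearized form $as_A(z,x,y)+as_A(z,y,x)$, which is the sum of identity~(4) at $(z,x,y)$, identity~(2) at $(z,x,y)$ and $(z,y,x)$, together with identity~(3) at $(x,z,y)$ and $(y,z,x)$; or, equivalently, pass to the opposite pre-alternative algebra and invoke your (correct) argument for the left-alternative law. So the approach is right, but the ``symmetrically, using the second identity'' step needs one more ingredient.
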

\begin{prop}\label{prealt==>premal}
Let $(A, \prec , \succ)$ be a pre-alternative algebra.
Then the product
$x\rhd y=x\succ y-y\prec x$
defines a pre-Malcev structure in $A$.
\end{prop}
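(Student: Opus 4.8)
\section*{Proof proposal for Proposition \ref{prealt==>premal}}

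The plan is to exhibit the product $\rhd$ as the one induced by an $\mathcal{O}$-operator of the Malcev algebra underlying the alternative algebra $(A,\cdot)$, $x\cdot y=x\prec y+x\succ y$. By the preceding proposition $(A,\cdot)$ is alternative, hence Malcev-admissible, so $(A,[\cdot,\cdot])$ with $[x,y]=x\cdot y-y\cdot x$ is a Malcev algebra and $(A,L_\succ,R_\prec)$ is a bimodule of $(A,\cdot)$. A one-line computation gives $x\rhd y-y\rhd x=(x\succ y+x\prec y)-(y\succ x+y\prec x)=[x,y]$, so the bracket occurring in the pre-Malcev identity \eqref{PM} is precisely this Malcev bracket; nothing has to be proved for it.

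First I would show that $\rho:=L_\succ-R_\prec:A\to\mathrm{End}(A)$, i.e. $\rho(x)y=x\succ y-y\prec x=x\rhd y$, is a representation of the Malcev algebra $(A,[\cdot,\cdot])$ in the sense of \eqref{representation}. The cleanest route avoids manipulating \eqref{representation} directly: since $(A,L_\succ,R_\prec)$ is a bimodule of the alternative algebra $(A,\cdot)$, the semidirect product $A\ltimes_{L_\succ,R_\prec}A$ is again an alternative algebra, hence Malcev-admissible; its commutator Malcev algebra is exactly $(A,[\cdot,\cdot])\ltimes_\rho A$, because on the module part the commutator reads $[x,v]=L_\succ(x)v-R_\prec(x)v=\rho(x)v$ and $[v,w]=0$. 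That $(A,[\cdot,\cdot])\ltimes_\rho A$ is a Malcev algebra is exactly the statement that $\rho$ satisfies \eqref{representation}. Alternatively, one can substitute $L_\succ,R_\prec$ into \eqref{representation} and collapse the resulting triple composites using the four bimodule axioms together with the strong alternativity identities \eqref{alt1}--\eqref{alt2}, valid since $\mathrm{char}\,\mathbb{K}=0$.

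Next, $\mathrm{id}_A$ is an $\mathcal{O}$-operator of $(A,[\cdot,\cdot])$ associated to $(A,\rho)$: by the very definitions of $\cdot$ and $\rho$ one has $[\mathrm{id}(x),\mathrm{id}(y)]=[x,y]=x\rhd y-y\rhd x=\rho(x)y-\rho(y)x=\mathrm{id}\big(\rho(\mathrm{id}\,x)y-\rho(\mathrm{id}\,y)x\big)$. Invoking the $\mathcal{O}$-operator/pre-Malcev correspondence --- an $\mathcal{O}$-operator $T:V\to A$ of a Malcev algebra associated to $(V,\rho)$ endows $V$ with the pre-Malcev product $u\rhd v=\rho(Tu)v$; this is the analogue for Malcev algebras of Madariaga's Rota-Baxter construction recalled in the introduction (itself the case $V=A$, $\rho=ad$, $T=\mathcal{R}$), and of Kupershmidt's theorem for Lie algebras --- we conclude with $T=\mathrm{id}_A$ that $x\rhd y=\rho(x)y=x\succ y-y\prec x$ satisfies \eqref{PM}, which is the assertion.

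The main obstacle is concentrated in the representation step: making it self-contained requires either the semidirect-product observation above (short, but it uses that an alternative bimodule yields an alternative semidirect product and that alternative algebras are Malcev-admissible) or, for a bare-hands verification, a somewhat lengthy reconciliation of the triple operator composites in \eqref{representation} against the four pre-alternative/bimodule axioms. Once $\rho$ is known to be a representation, the remaining steps are immediate. An entirely alternative but more computational proof substitutes $x\rhd y=x\succ y-y\prec x$ and $[x,y]=x\cdot y-y\cdot x$ directly into \eqref{PM}, expands into monomials in $\prec$ and $\succ$, and reduces them to the four defining identities of a pre-alternative algebra; its only real difficulty is bookkeeping the comparatively large number of monomials produced by the nested $\rhd$'s.
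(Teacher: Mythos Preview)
The paper states this proposition without proof (it is essentially quoted from Madariaga's work, cited as \cite{Madariaga}), so there is nothing to compare your argument against directly.

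Your proof is correct. The route via showing that $\rho=L_\succ-R_\prec$ is a Malcev representation (using the alternative semidirect product and Malcev-admissibility) and then observing that $\mathrm{id}_A$ is an $\mathcal{O}$-operator is clean and conceptually the right one. One simplification worth noting: once $\rho$ is a representation of $(A,[\cdot,\cdot])$ and you have checked $[x,y]=x\rhd y-y\rhd x$, the representation identity \eqref{representation} evaluated at an element $t$ \emph{is} precisely the pre-Malcev identity \eqref{PM}; so the ``$\mathcal{O}$-operator/pre-Malcev correspondence'' you appeal to in the last step is, for $T=\mathrm{id}$, nothing more than this observation and needs no external reference. A minor structural caution: the semidirect-product argument and the correspondence you invoke are developed in this paper only in later sections (the $A$-bimodule semidirect product in Proposition~\ref{semidirectproductalt}, the passage $\mathfrak{l}-\mathfrak{r}$ to a Malcev representation in Proposition~\ref{repalt=repmalc}, and the weight-$0$ case of Theorem~\ref{thm:depostmal}); if you want the proof to stand at this point in the text, cite these facts from the literature or unwind the representation identity directly as above.
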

\subsection{\texorpdfstring{$A$}{}-bimodule alternative algebras, weighted \texorpdfstring{$\mathcal{O}$}{}-operators and post-alternative algebras}
\begin{df}
Let $(A,\c)$ be an alternative  algebra. Let $(V,\c_V)$ be an alternative algebra and
 $\mathfrak{l},\mathfrak{r}:A\to End(V)$ be two linear maps. We say that $(V,\c_V,\mathfrak{l},\mathfrak{r})$ is an $\textbf{A-bimodule alternative algebra}$ if $(V,\mathfrak{l},\mathfrak{r})$ is a representation of  $(A,\c)$  such that the following compatibility conditions hold $($for all $x \in A,\ a,b \in V)$
\begin{eqnarray}
\label{repKaltalgebras1}
 &&\mathfrak{r}(x)(a\c_V b)- a\c_V (\mathfrak{r}(x)b) +\mathfrak{r}(x)(b\c_V a) - b\c_V (\mathfrak{r}(x)a)=0,\\
\label{repKaltalgebras2} &&(\mathfrak{l}(x)a)\c_V b-\mathfrak{l}(x)(a\c_V b) +(\mathfrak{l}(x)b)\c_V a - \mathfrak{l}(x)(b\c_V a)=0,\\
\label{repKaltalgebras3} &&(\mathfrak{l}(x)a)\c_V b- a\c_V (\mathfrak{l}(x)b) +(\mathfrak{r}(x)a)\c_V b -\mathfrak{l}(x)(a\c_V b)=0,\\
\label{repKaltalgebras4} &&(\mathfrak{r}(x)a)\c_V b-a\c_V (\mathfrak{l}(x)b) +\mathfrak{r}(x)(a\c_V b) - a\c_V (\mathfrak{r}(x)b)=0.
 \end{eqnarray}
 \end{df}

\begin{prop}\label{semidirectproductalt}
Let $(A,\c)$ and $(V,\c_V)$ be two alternative algebras and
  $\mathfrak{l},\mathfrak{r}:A\to End(V)$ be two linear maps. Then $(V,\c_V,\mathfrak{l},\mathfrak{r})$ is an $A$-bimodule alternative algebra if and only if the direct sum $A\oplus V$ of vector spaces
 is an alternative algebra (the semi-direct sum) with the product
on $A\oplus V$ defined for all $x,y\in A, \ a,b\in V$ by
\begin{equation}
(x+a)\ast (y+b)=x\c y+\mathfrak{l}(x)b+\mathfrak{r}(y)a+a\c_V b.
\end{equation}
We denote this algebra by $A\ltimes_{\mathfrak{l},\mathfrak{r}} V$ or simply $A\ltimes V$. Further, if $(A,\c)$ is an alternative algebra, then it is easy to see that
$(A, \c, L_\c,R_\c)$ is an $A$-bimodule alternative algebra, where $L_\c$ and $R_\c$ are the
left and right multiplication operators corresponding to the multiplication $\c$.
\end{prop}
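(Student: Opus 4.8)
The plan is to expand the alternativity axioms for $(A\oplus V,\ast)$ and sort the resulting relations by how many of the three arguments come from $V$. Since $\mathbb{K}$ has characteristic $0$, by Remark~\ref{remequivdef:altalg} the algebra $(A\oplus V,\ast)$ is alternative if and only if it satisfies the two polarized identities, that is, $as_{\ast}(P,Q,R)+as_{\ast}(Q,P,R)=0$ and $as_{\ast}(R,P,Q)+as_{\ast}(R,Q,P)=0$ for all $P,Q,R$. I would therefore compute, for $P=x+a$, $Q=y+b$, $R=z+c$ with $x,y,z\in A$ and $a,b,c\in V$, the associator $as_{\ast}(P,Q,R)$ directly from the defining product. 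Its $A$-component is exactly $as_{A}(x,y,z)$ (so $A$ is a subalgebra), and its $V$-component is multilinear and breaks into a part with one argument from $V$ (``$V$-degree $1$''), a part with two arguments from $V$ (``$V$-degree $2$''), and the part $as_{\ast}(a,b,c)$ of $V$-degree $3$, which is just the associator $as_{V}(a,b,c)$ of $(V,\cdot_V)$ (so $V$ is a subalgebra).

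Then I would read off the two polarized identities block by block. In $V$-degree $0$ and $V$-degree $3$ they say precisely that $(A,\cdot)$ and $(V,\cdot_V)$ are alternative, since $\ast$ restricts to $\cdot$ on $A$ and to $\cdot_V$ on $V$. Collecting the $V$-degree $1$ terms of each polarized identity and grouping them according to the single $V$-argument they contain produces exactly the four defining relations of a representation $(V,\mathfrak{l},\mathfrak{r})$ of $(A,\cdot)$ (two of them coming out of the left polarized identity, two out of the right). Finally, collecting the $V$-degree $2$ terms and grouping them by the pair of $V$-arguments involved produces exactly the four compatibility conditions \eqref{repKaltalgebras1}--\eqref{repKaltalgebras4}. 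Carrying this correspondence out in both directions yields the stated equivalence; for the converse one simply checks that if $A$, $V$ are alternative, $(V,\mathfrak{l},\mathfrak{r})$ is a representation, and the compatibilities hold, then reassembling the four blocks gives $as_{\ast}(P,P,R)=as_{\ast}(R,P,P)=0$.

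For the last assertion, put $V=A$, $\cdot_V=\cdot$, $\mathfrak{l}=L_\cdot$, $\mathfrak{r}=R_\cdot$. Then each of \eqref{repKaltalgebras1}--\eqref{repKaltalgebras4} collapses to a sum of the form $as_{A}(u,v,w)+as_{A}(v,u,w)$ or $as_{A}(w,u,v)+as_{A}(w,v,u)$, which vanishes by \eqref{alt1}--\eqref{alt2}; that $(A,L_\cdot,R_\cdot)$ is a representation of $(A,\cdot)$ is the classical regular bimodule and follows in the same way from \eqref{alt1}--\eqref{alt2}. Hence $(A,\cdot,L_\cdot,R_\cdot)$ is an $A$-bimodule alternative algebra, and applying the equivalence just proved recovers $A\ltimes_{L_\cdot,R_\cdot}A$ as an alternative algebra.

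I expect the only real difficulty to be organizational rather than conceptual: the $V$-component of $as_{\ast}$ has roughly a dozen terms, and one must track the multidegree bookkeeping and the exact pairing of arguments carefully to see that each block matches the intended axiom --- in particular, the representation axioms as written carry an apparent slip between the indices $y$ and $z$ that has to be reconciled with the output of the expansion. Beyond this careful sorting, no new idea is needed, but one should make sure the ``if'' direction genuinely uses all four compatibility conditions and both polarized identities, so that the equivalence is not accidentally one-sided.
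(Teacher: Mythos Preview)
Your proposal is correct and follows essentially the same route as the paper: both expand the polarized associator identities $as_{\ast}(P,Q,R)+as_{\ast}(Q,P,R)$ and $as_{\ast}(R,P,Q)+as_{\ast}(R,Q,P)$ for $P=x+a$, $Q=y+b$, $R=z+c$ and read off the conditions. Your organization by $V$-degree is a bit more explicit than the paper's, which writes out the raw expansion and then simply asserts which axioms each polarized identity is equivalent to; in particular, you separate out the $V$-degree~$1$ block (the bimodule axioms for $(V,\mathfrak{l},\mathfrak{r})$) and the $V$-degree~$3$ block (alternativity of $(V,\cdot_V)$), whereas the paper's proof suppresses these and names only \eqref{alt1}, \eqref{repKaltalgebras1}, \eqref{repKaltalgebras3} (resp.\ \eqref{alt2}, \eqref{repKaltalgebras2}, \eqref{repKaltalgebras4}) in its summary. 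You also supply an argument for the final claim about $(A,\cdot,L_\cdot,R_\cdot)$, which the paper leaves as ``easy to see''. None of this is a genuinely different method---just a cleaner bookkeeping of the same computation.
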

\begin{proof}
For any $x,y,z\in A, a,b,c\in V$
\begin{align*}
&as_{A\oplus V} (x+a, y+b, z+c) + as_{A\oplus V} (y+b, x+a, z+c)=\\
&((x+a) \ast (y+b)) \ast (z+c)-(x+ a) \ast ((y+b) \ast (z+c)) + ((y+b) \ast (x+a)) \ast (z+c) \\
&\quad - (y+b) \ast ((x+a) \ast (z+c))\\
&=(x\c y+\mathfrak{l}(x)b+\mathfrak{r}(y)a+a\c_V b)\ast (z+c)-(x+ a)\ast (y\c z +\mathfrak{l}(y)c+\mathfrak{r}(z)b+b\c_V c)\\
&\quad+(y\c x+\mathfrak{l}(y)a+\mathfrak{r}(x)b+b\c_V a)\ast (z+c)-(y+b) \ast (x\c z +\mathfrak{l}(x)c+\mathfrak{r}(z)a+a\c_V c)\\
&\quad-x\c(y\c z)-\mathfrak{l}(x)\big(\mathfrak{l}(y)c+\mathfrak{r}(z)b+b\c_V c\big) -\mathfrak{r}(y\c z)a-a\c_V \big(\mathfrak{l}(y)c+\mathfrak{r}(z)b+b\c_V c\big)\\
&\quad+(y\c x)\c z+\mathfrak{l}(y\c x)c+\mathfrak{r}(z)\big(\mathfrak{l}(y)a+\mathfrak{r}(x)b+b\c_V a\big) +\big(\mathfrak{l}(y)a+\mathfrak{r}(x)b+b\c_V a\big)\c_V c\\
&\quad-y\c(x\c z)-\mathfrak{l}(y)\big(\mathfrak{l}(x)c+\mathfrak{r}(z)a+a\c_V c\big)-\mathfrak{r}(x\c z)b- b\c_V \big(\mathfrak{l}(x)c+\mathfrak{r}(z)a+a\c_V c\big).
\end{align*}
Hence $as_{A\oplus V} (x+a, y+b, z+c) + as_{A\oplus V} (y+b, x+a, z+c)=0$ if and only if  \eqref{alt1}, \eqref{repKaltalgebras1} and \eqref{repKaltalgebras3} hold.\\
 Analogously,
$as_{A\oplus V} (z+c,x+a, y+b) + as_{A\oplus V} (z+c,y+b,x+a)= 0$
if and only if \eqref{alt2}, \eqref{repKaltalgebras2} and \eqref{repKaltalgebras4} hold.
\end{proof}

\begin{df}[\cite{BBGN2012}]\label{postaltr}
  A $\textbf{post-alternative algebra}$ $(A, \prec , \succ , \cdot  )$ is a
vector space $A$ equipped with bilinear operations $\prec , \succ , \cdot :A\otimes A\rightarrow A$
obeying the following equations for $\star= \prec+\succ+\c$  and all $x, y, z\in A$,
\begin{eqnarray}
&&(x \c y) \c z- x \c (y \c z) + (y \c x) \c z  - y \c(x \c z)=0, \label{postaltr1}\\
&&(z \c x)\c y- z \c (x\c y) + (z\c y) \c x -z\c (y \c x)=0, \label {postaltr2}\\
&&(x \c y) \prec z -x \c (y \prec z) + (y \c x) \prec z  - y \c (x \prec z)=0, \label{postaltr3}\\
&&(x \succ y) \c z- x \succ (y \c z) + (x \succ z) \c y  - x \succ (z \c y)=0, \label{postaltr4}\\
&&(y \succ x) \c z- x \c (y \succ z)+ (x \prec y) \c z  - y \succ (x \c z)=0, \label{postaltr5}\\
&&(z \prec x)\c y- z \c (x \succ y)+(z\c y) \prec x - z \c (y \prec x)  =0, \label{postaltr6}\\
&&(x\succ y) \prec z - x\succ(y \prec z) + (y \prec x) \prec z - y \prec (x \star z) = 0,\label{postaltr7}\\
&&(x\succ y)\prec z - x\succ(y \prec z) + (z \star x)\succ y - z\succ(x\succ y) = 0,\label{postaltr8}\\
&&(x\star y) \succ z - x\succ(y \succ z) + (y \star x) \succ z - y \succ (x \succ z) = 0,\label{postaltr9}\\
&&(z\prec x)\prec y - z\prec(x \star y) + (z \prec y)\prec x - z\prec(y\star x) = 0.\label{postaltr10}
\end{eqnarray}
\end{df}
\begin{rem}
 Let $(A, \prec , \succ , \cdot  )$ be a post-alternative algebra. If the operation $\c$ is trivial, then it
is a pre-alternative algebra.
\end{rem}

Let $(A, \prec , \succ , \cdot  )$ be a post-alternative algebra, it is obvious that $(A,\c )$ is an alternative algebra.
On the other hand, it is straightforward to get the following conclusion:
\begin{thm}
If $(A, \prec , \succ , \cdot  )$ is a post-alternative algebra, then with a new bilinear operation
$\star: A\times A\to A$ on $A$ defined for all $x,y\in A$ by
\begin{equation}
x\star y= x\prec y+ x\succ y+ x\c y,
\end{equation}
$(A, \star)$ becomes an alternative algebra. It is called the associated alternative algebra of $(A, \prec , \succ , \cdot  )$.
\end{thm}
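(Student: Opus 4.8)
The plan is to deduce the two linearized alternative identities for $\star$ directly from the defining relations \eqref{postaltr1}--\eqref{postaltr10}. Since $\operatorname{char}\mathbb{K}=0$, it suffices to prove that the associator $as_\star$ of $\star$ satisfies the fully linearized axioms \eqref{alt1} and \eqref{alt2}, i.e.
\[
as_\star(x,y,z)+as_\star(y,x,z)=0,\qquad as_\star(z,x,y)+as_\star(z,y,x)=0\qquad(x,y,z\in A),
\]
since substituting $y=x$ in the first relation gives $2\,as_\star(x,x,z)=0$, and in the second $2\,as_\star(z,x,x)=0$, which are precisely the left- and right-alternative laws of Definition \ref{def:altalg} (compare Remark \ref{remequivdef:altalg}). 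First I would expand $u\star v=u\prec v+u\succ v+u\c v$ inside $as_\star(x,y,z)=(x\star y)\star z-x\star(y\star z)$; by trilinearity this rewrites $as_\star(x,y,z)$ as a sum of eighteen monomials of the shapes $(x\ast_1 y)\ast_2 z$ and $-\,x\ast_2(y\ast_1 z)$ with $\ast_1,\ast_2\in\{\prec,\succ,\c\}$, so that each of the two identities above becomes an identity among thirty-six such monomials.

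The heart of the argument is then a regrouping: I would show that these thirty-six monomials assemble, sign by sign, into left-hand sides of the post-alternative axioms, after first re-expanding every inner $\star$ occurring in \eqref{postaltr7}--\eqref{postaltr10} via $\star=\prec+\succ+\c$. Writing $L_k$ for the left-hand side of \eqref{postaltrk}, $L_5(x,y,z)$ for $L_5$ itself and $L_5(y,x,z)$ for the same expression with $x$ and $y$ interchanged (similarly for $L_7$), I expect
\[
as_\star(x,y,z)+as_\star(y,x,z)=L_1+L_3+L_9+L_5(x,y,z)+L_5(y,x,z)+L_7(x,y,z)+L_7(y,x,z),
\]
where every summand on the right vanishes by the post-alternative axioms, hence so does the left-hand side. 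By the evident left/right symmetry among the ten axioms, the second identity follows in the same way from $L_2$, $L_4$, $L_{10}$ together with the two argument-instances of $L_6$ and of $L_8$. The purely $\c$-monomials are absorbed by $L_1$ and $L_2$, which say exactly that $(A,\c)$ is an alternative algebra.

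I expect the only real difficulty to be the sign-careful bookkeeping of this matching: one must track which of $\prec,\succ,\c$ occupies the inner and which the outer slot of each monomial, and re-expand the products $x\star z$, $x\star y$, $y\star x$ hidden inside \eqref{postaltr5}, \eqref{postaltr7}, \eqref{postaltr9}, \eqref{postaltr10}. The one genuinely non-obvious feature is that the left-alternative linearization is not a combination of ``left-type'' axioms alone: it requires both the $(x,y,z)$- and the $(y,x,z)$-instance of \eqref{postaltr5} and of \eqref{postaltr7}; once this regrouping is spotted, all thirty-six monomials cancel in pairs. Having shown $(A,\star)$ alternative, the terminology ``associated alternative algebra'' is then justified, in parallel with the pre-alternative case treated above.
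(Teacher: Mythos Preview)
Your proposal is correct and follows essentially the same approach as the paper: expand $as_\star(x,y,z)+as_\star(y,x,z)$ into the thirty-six monomials and cancel them against the post-alternative axioms, then handle the right-alternative linearization symmetrically. Your version is actually more transparent than the paper's, since you identify explicitly which axioms absorb which terms (the paper simply writes out the expansion and asserts that it vanishes), and your claimed decomposition $L_1+L_3+L_9+L_5(x,y,z)+L_5(y,x,z)+L_7(x,y,z)+L_7(y,x,z)$ checks out term by term.
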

\begin{proof}
In fact, for any $x,y,z\in A$, we have
\begin{align*}
& as_A (x,y,z)+ as_A (y,x,z)=
(x\star y)\star z-x\star(y\star z)+ (y\star x)\star z-y\star(x\star z)\\
 &=(x\star y)\prec z+(x\star y)\succ z+(x\star y)\c z -x\prec(y\star z)-x\succ(y\star z)-x\c(y\star z)\\
 &\quad+(y\star x)\prec z+(y\star x)\succ z+(y\star x)\c z-y\prec(x\star z)-y\succ(x\star z)-y\c(x\star z)\\
 &=(x\prec y)\prec z+(x\succ y)\prec z+(x\c y)\prec z+(x\star y)\succ z+(x\prec y)\c z+(x\succ y)\c z\\
 &\quad+(x\c y)\c z-x\prec(y\star z)-x\succ(y\prec z)- x\succ(y\succ z)-x\succ(y\c z) -x\c(y\prec z)\\
&\quad- x\c(y\succ z)-x\c(y\c z)+(y\prec x)\prec z+(y\succ x)\prec z+(y\c x)\prec z+(y\star x)\succ z\\
&\quad+(y\prec x)\c z+(y\succ x)\c z +(y\c x)\c z-y\prec(x\star z)-y\succ(x\prec z)- y\succ(x\succ z)\\
&\quad-y\succ(x\c z) -y\c(x\prec z)- y\c(x\succ z)-y\c(x\c z) =0,
\end{align*}
and then replacing $(x,y,z)$ in this computation by $(z,x,y)$ yields
$as_A (z,x,y)+ as_A (z,y,x)=0$, which completes the proof according to Definition \ref{def:altalg} and Remark \ref{remequivdef:altalg}.
\end{proof}
The following terminology is motivated by the notion of $\lambda$-weighted $\mathcal{O}$-operator as a generalization of
(the operator form of) the classical Yang-Baxter equation in \cite{Bai2,K2}.
\begin{df}\label{o-opALT}
Let $(A, \cdot)$ be an alternative algebra and $(V,\c_V,\mathfrak{l},\mathfrak{r})$ be an $A$-bimodule alternative algebra. A linear map $T : V \to  A $ is called a $\lambda$-weighted $\mathcal{O}$-operator associated to $(V,\c_V,\mathfrak{l},\mathfrak{r})$
 if $T$ satisfies, for all $a, b \in V$,
\begin{equation}\label{oopaltalg}
 T (a) \cdot T (b)= T \big(\mathfrak{l}(T(a))b + \mathfrak{r}(T (b))a+ \lambda a\c_V b \big).
\end{equation}
\end{df}
When $(V,\c_V,\mathfrak{l},\mathfrak{r}) = (A,\c ,L_\c,R_\c)$, the condition \eqref{oopaltalg}  becomes
\begin{equation}\label{rotabaxteraltalg}
\mathcal{R}(x)\c \mathcal{R}(y)=\mathcal{R}\big(\mathcal{R}(x)\c y+ x\c \mathcal{R}(y) + \lambda x\c y\big).
\end{equation}
The property \eqref{rotabaxteraltalg} implies that $\mathcal{R}:A\to A$ is a $\lambda$-weighted Rota-Baxter operator  on the alternative algebra $(A, \c)$.

\begin{thm}
Let $(A, \cdot)$ be an alternative algebra and $(V,\c_V,\mathfrak{l},\mathfrak{r})$ be an $A$-bimodule alternative algebra. Let $T:V \to A$ be a $\lambda$-weighted $\mathcal{O}$-operator  associated to $(V,\c_V,\mathfrak{l},\mathfrak{r})$.
Define three new bilinear operations $\prec,\succ,\circ
:V\otimes V\to V$ on $V$ as follows:
\begin{equation}
  \label{alt==>postalt} a\succ b= \mathfrak{l}(T (a))b,\quad a\prec b= \mathfrak{r}(T (b))a, \quad a\circ b= \lambda a\c_V b.
\end{equation}
Then $(V,\prec,\succ, \circ)$  becomes a post-alternative algebra and $T$ is a homomorphism of alternative algebras.
\label{thm:depostalt}
\end{thm}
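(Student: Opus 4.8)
The plan is to prove the two assertions in turn: that $(V,\prec,\succ,\circ)$ satisfies the ten defining identities \eqref{postaltr1}--\eqref{postaltr10} of a post-alternative algebra, and that $T$ is a homomorphism of alternative algebras. It is convenient to record the homomorphism fact first, since it feeds into the rest of the argument. Writing $\star=\prec+\succ+\circ$, one has $a\star b=\mathfrak{l}(T(a))b+\mathfrak{r}(T(b))a+\lambda\,a\c_V b$ for all $a,b\in V$, so the defining relation \eqref{oopaltalg} of a $\lambda$-weighted $\mathcal{O}$-operator is exactly $T(a)\c T(b)=T(a\star b)$. Once $(V,\prec,\succ,\circ)$ is known to be post-alternative, it has (by the Theorem above on the associated alternative algebra) an associated alternative algebra $(V,\star)$, and the identity just displayed says precisely that $T\colon(V,\star)\to(A,\c)$ is a homomorphism of alternative algebras. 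In addition, it permits the replacement of $T(u\star v)$ by $T(u)\c T(v)$ everywhere in the verification of the ten identities.

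To verify those identities I would substitute the definitions \eqref{alt==>postalt} into each one and sort them according to which ambient structure they involve. Identities \eqref{postaltr1} and \eqref{postaltr2} contain only $\circ$: substituting $u\circ v=\lambda\,u\c_V v$ factors out $\lambda^{2}$ and leaves $as_{V}(x,y,z)+as_{V}(y,x,z)$, respectively $as_{V}(z,x,y)+as_{V}(z,y,x)$, which vanish because $(V,\c_V)$ is alternative (Remark \ref{remequivdef:altalg}). Identities \eqref{postaltr3}--\eqref{postaltr6} contain $\circ$ together with exactly one of $\prec,\succ$: substitution factors out $\lambda$ and leaves, in order, the four compatibility conditions \eqref{repKaltalgebras1}, \eqref{repKaltalgebras2}, \eqref{repKaltalgebras3}, \eqref{repKaltalgebras4} defining the $A$-bimodule alternative algebra $(V,\c_V,\mathfrak{l},\mathfrak{r})$, each read with the $A$-slot filled by $T$ of the appropriate element.

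Identities \eqref{postaltr7}--\eqref{postaltr10} involve only $\prec$, $\succ$ and $\star$. Here I substitute $a\succ b=\mathfrak{l}(T(a))b$ and $a\prec b=\mathfrak{r}(T(b))a$, use $T(u\star v)=T(u)\c T(v)$ to rewrite the $\star$-terms, and reduce to the four defining identities of the bimodule $(V,\mathfrak{l},\mathfrak{r})$ over $(A,\c)$ evaluated at arguments in $T(V)$: the relation $\mathfrak{r}(u)\mathfrak{r}(v)+\mathfrak{r}(v)\mathfrak{r}(u)=\mathfrak{r}(uv)+\mathfrak{r}(vu)$ handles \eqref{postaltr10}, the relation $\mathfrak{l}(uv)+\mathfrak{l}(vu)=\mathfrak{l}(u)\mathfrak{l}(v)+\mathfrak{l}(v)\mathfrak{l}(u)$ handles \eqref{postaltr9}, the relation $\mathfrak{r}(v)\mathfrak{l}(u)+\mathfrak{r}(v)\mathfrak{r}(u)=\mathfrak{l}(u)\mathfrak{r}(v)+\mathfrak{r}(uv)$ handles \eqref{postaltr7}, and the relation $\mathfrak{l}(uv)+\mathfrak{r}(v)\mathfrak{l}(u)=\mathfrak{l}(u)\mathfrak{l}(v)+\mathfrak{l}(u)\mathfrak{r}(v)$ (combined, where needed, with the $\mathfrak{l}\mathfrak{l}$-relation above) handles \eqref{postaltr8}.

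Each single computation is short; the real effort is bookkeeping, namely matching each of the ten identities with the correct one of the four bimodule or four compatibility relations and tracking the factor $\lambda$ or $\lambda^{2}$. I expect the block \eqref{postaltr7}--\eqref{postaltr10} to be the most delicate, because there the $\star$-terms genuinely mix all three operations and the homomorphism identity $T(u\star v)=T(u)\c T(v)$ must be used before any bimodule relation becomes visible; \eqref{postaltr8} is the one that does not reduce verbatim to a single axiom. Once the ten verifications are assembled, $(V,\prec,\succ,\circ)$ is a post-alternative algebra, and together with the homomorphism fact noted at the outset this completes the proof.
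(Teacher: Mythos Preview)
Your proposal is correct and follows essentially the same route as the paper: substitute the definitions \eqref{alt==>postalt}, factor out the appropriate power of $\lambda$, and reduce each post-alternative identity to one of the ambient axioms (alternativity of $(V,\c_V)$ for \eqref{postaltr1}--\eqref{postaltr2}, the $A$-bimodule-alternative compatibility conditions for \eqref{postaltr3}--\eqref{postaltr6}, and the alternative-bimodule axioms together with $T(u\star v)=T(u)\c T(v)$ for \eqref{postaltr7}--\eqref{postaltr10}). The paper only writes out \eqref{postaltr3}--\eqref{postaltr5} explicitly and dismisses the rest with ``similarly''; your grouping of the ten identities by which structural axioms they invoke, and your explicit treatment of the homomorphism claim via the $\mathcal{O}$-operator identity, are in fact more complete than what the paper records.
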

\begin{proof}
Since $A$ is an alternative algebra, \eqref{postaltr1} and \eqref{postaltr2} obviously hold. Furthermore, for any
$a,b,c\in V$, we have
\begin{align*}
&(a \circ b) \prec c -a \circ (b \prec c) + (b \circ a) \prec c  - b \circ (a \prec c)\\
&=(\lambda a\c_V b)\prec c-a \circ (\mathfrak{r}(T(c))b)+(\lambda b\c_V a)\prec c- b \circ (\mathfrak{r}(T(c))a)\\
&=\lambda\big((\mathfrak{r}(T(c))a)\c_V b-a\c_V (\mathfrak{r}(T(c))b)+(\mathfrak{r}(T(c))b)\c_V a-b\c_V (\mathfrak{r}(T(c))a) \big)=0.
\end{align*}
So, \eqref{postaltr3} holds. Moreover, \eqref{postaltr4} holds. Indeed,
\begin{align*}
    &(a \succ b) \circ c- a \succ (b \circ c) + (a \succ c) \circ b  - a \succ (c \circ b)\\
    &=(\mathfrak{l}(T (a))b) \circ c- a \succ (\lambda b\c_V c) + (\mathfrak{l}(T (a))c)\circ b
    - a \succ (\lambda c\c_V b)\\
    &= \lambda\big((\mathfrak{l}(T (a))b)\c_V c- \mathfrak{l}(T (a))(b\c_V c)  + (\mathfrak{l}(T (a))c)\c_V b- \mathfrak{l}(T (a))(c\c_V b)\big)=0.
\end{align*}
To prove identity \eqref{postaltr5}, we compute as follows
\begin{align*}
&(b \succ a) \circ c- a \circ (b \succ c)+ (a \prec b) \circ c  - b \succ (a \circ c)\\
&= (\mathfrak{l}(T (b))a)\circ c- a \circ (\mathfrak{l}(T (b))c)+(\mathfrak{r}(T (b))a)\circ c  - b \succ (\lambda a \c_V  c)\\
&=\lambda\big(\mathfrak{l}(T (b))a)\c_V c- a \c_V (\mathfrak{l}(T (b))c)+(\mathfrak{r}(T (b))a)\c_V c-\mathfrak{l}(T (b))(a \c_V  c)\big)=0.
\end{align*}
The other identities can be shown similarly.
\end{proof}
\begin{cor}
\label{cor:postaltRB}
Let $(A, \c)$ be an alternative
algebra and $\mathcal{R}: A\rightarrow A$ be a $\lambda$-weighted Rota-Baxter operator for $A$.
Then $(A,\prec,\succ,\circ)$ is a post-alternative algebra with the operations
    \begin{eqnarray*}
        x \prec y = x \c \mathcal{R}(y),\quad
        x \succ y = \mathcal{R}(x) \c y,
        \quad x \circ y = \lambda x \c y.
\end{eqnarray*}
\end{cor}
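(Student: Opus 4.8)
The plan is to deduce this statement directly from Theorem~\ref{thm:depostalt} by specializing the $A$-bimodule alternative algebra to the regular one. By the last assertion of Proposition~\ref{semidirectproductalt}, $(A,\c,L_\c,R_\c)$ is an $A$-bimodule alternative algebra, where $L_\c(x)y=x\c y$ and $R_\c(x)y=y\c x$. With this choice of $(V,\c_V,\mathfrak{l},\mathfrak{r})=(A,\c,L_\c,R_\c)$, the defining identity \eqref{oopaltalg} of a $\lambda$-weighted $\mathcal{O}$-operator reduces to \eqref{rotabaxteraltalg}, i.e. to the condition that $\mathcal{R}$ be a $\lambda$-weighted Rota-Baxter operator on $(A,\c)$. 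Hence $\mathcal{R}$ is precisely a $\lambda$-weighted $\mathcal{O}$-operator $T=\mathcal{R}:A\to A$ associated to $(A,\c,L_\c,R_\c)$, and Theorem~\ref{thm:depostalt} applies.

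Next I would substitute $T=\mathcal{R}$, $\mathfrak{l}=L_\c$, $\mathfrak{r}=R_\c$, $\c_V=\c$ into the three operations \eqref{alt==>postalt} produced by Theorem~\ref{thm:depostalt}. This gives, for all $x,y\in A$,
$$x\succ y=\mathfrak{l}(T(x))y=L_\c(\mathcal{R}(x))y=\mathcal{R}(x)\c y,\qquad x\prec y=\mathfrak{r}(T(y))x=R_\c(\mathcal{R}(y))x=x\c\mathcal{R}(y),$$
and $x\circ y=\lambda\,x\c_V y=\lambda\,x\c y$, which are exactly the operations in the statement. Theorem~\ref{thm:depostalt} then guarantees that $(A,\prec,\succ,\circ)$ is a post-alternative algebra, and as a by-product that $\mathcal{R}$ is a homomorphism of alternative algebras from the associated alternative structure onto $(A,\c)$.

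There is essentially no obstacle here: the only point deserving a line of verification is the claim, used via Proposition~\ref{semidirectproductalt}, that $(A,\c,L_\c,R_\c)$ satisfies the bimodule compatibility conditions \eqref{repKaltalgebras1}--\eqref{repKaltalgebras4}. After replacing $\mathfrak{l},\mathfrak{r}$ by $L_\c,R_\c$ and $\c_V$ by $\c$, each of these conditions is a rewriting of one of the linearized alternative identities \eqref{alt1}--\eqref{alt2} (equivalently of $as_A(x,x,y)=as_A(y,x,x)=0$), and so holds in any alternative algebra. I would simply record this remark and then invoke Theorem~\ref{thm:depostalt} verbatim, with no further computation needed.
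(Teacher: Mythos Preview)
Your proposal is correct and is exactly the approach the paper intends: the corollary is placed immediately after Theorem~\ref{thm:depostalt} and is obtained by specializing that theorem to the regular $A$-bimodule alternative algebra $(A,\c,L_\c,R_\c)$, where a $\lambda$-weighted $\mathcal{O}$-operator is precisely a $\lambda$-weighted Rota-Baxter operator. Your check that the compatibility conditions reduce to the linearized alternative identities is a welcome but inessential extra, since the paper already records this fact in Proposition~\ref{semidirectproductalt}.
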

\section{Weighted \texorpdfstring{$\mathcal{O}$}{}-operators and post-Malcev algebras}
We start this section by introducing the notion of post-Malcev algebra  together with some of
its basic properties. We will also briefly discuss the post-Malcev
algebra structure underneath the $\lambda$-weighted $\mathcal{O}$-operators. We then show that there is a close relationship between post-Malcev algebras and post-alternative algebras in parallel to the relationship between pre-Malcev and pre-alternative algebras.
\subsection{\texorpdfstring{$A$}{}-module Malcev algebras and weighted \texorpdfstring{$\mathcal{O}$}{}-operators }
Now, we  extend the concept of a module to that of an
$A$-module algebra by replacing the $\mathbb{K}$-module $V$ by a
Malcev algebra. Next, we introduce $\lambda$-weighted $\mathcal{O}$-operators on Malcev algebras and study
some basic properties.

\begin{df}
Let $(A,[\c,\c])$ and $(V,[\c,\c]_V)$ be two Malcev algebras. Let
 $\rho:A  \to End(V)$ be a linear map such that  $(V,\rho)$ is a representation of  $(A,[\c,\c])$  and the following compatibility conditions hold for all $x, y, \in A,\ a,b, c \in V:$
\begin{align}\label{repKalgebras1}
&\r([x, y])[a,b]_V =\r(x)[\r(y)a, b]_V- [\r(y)\r(x)a, b]_V - [\r(x)\r(y)b, a]_V + \r(y)[\r(x)b, a]_V ,\\
\label{repKalgebras2} &[\r(x)a, \r(y)b]_V = [\r([x, y])a, b]_V - \r(x)[\r(y)a, b]_V + \r(y)\r(x)[a, b]_V + [\r(y)\r(x)b, a]_V,\\
\label{repKalgebras3} &[\r(x)a, [b, c ]_V ]_V= [[\r(x)b, a]_V, c]_V - \r(x)[[b, a]_V,c ]_V  - [\r(x)[a, c]_V , b]_V - [[\r(x)c, b]_V, a]_V.
\end{align}
Then $(V,[\c,\c]_V,\rho)$ is called an $\textbf{A-module Malcev algebra}$.
\end{df}
In the sequel, an $A$-module Malcev algebra is denoted by $(V; [\c,\c]_V,\r)$. It is straightforward to get the following:
\begin{prop}
 Let $(A,[\c,\c])$ and $(V,[\c,\c]_V)$ be two  Malcev algebras and $(V; [\c,\c]_V,\rho)$ be an $A$-module Malcev algebra. Then $(A\oplus V, [\c,\c]_\r) $ carries a new Malcev algebra structure with bracket
\begin{align} \label{semidirectproductMalcev}
[x+a,y+b]_{\rho}=&[x,y]+\rho(x)b-\rho(y)a+[a,b]_V, \quad \forall x,y\in A,\quad a,b\in V.
\end{align}
This is called the semi-direct product, often denoted by  $A\ltimes_{\r} V$ or simply $A\ltimes V$.
\end{prop}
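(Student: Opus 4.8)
The plan is to check the two axioms of a Malcev algebra for the bracket $[\c,\c]_{\r}$ of \eqref{semidirectproductMalcev}. Anti-symmetry is immediate: $[\c,\c]$ and $[\c,\c]_V$ are anti-symmetric, and the term $\rho(x)b-\rho(y)a$ changes sign under $(x+a)\leftrightarrow(y+b)$, so $[X,Y]_{\r}=-[Y,X]_{\r}$ for all $X,Y\in A\oplus V$. It then remains to establish the Malcev identity \eqref{maltsev} for $[\c,\c]_{\r}$; since \eqref{maltsev} is equivalent to Sagle's identity, it suffices to verify that for all $X_1,X_2,X_3,X_4\in A\oplus V$,
\[
[[X_1,X_3]_{\r},[X_2,X_4]_{\r}]_{\r}=[[[X_1,X_2]_{\r},X_3]_{\r},X_4]_{\r}+[[[X_2,X_3]_{\r},X_4]_{\r},X_1]_{\r}+[[[X_3,X_4]_{\r},X_1]_{\r},X_2]_{\r}+[[[X_4,X_1]_{\r},X_2]_{\r},X_3]_{\r}.
\]

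Writing $X_i=x_i+a_i$ with $x_i\in A$ and $a_i\in V$, both sides are multilinear in $X_1,\dots,X_4$, so each decomposes into $2^{4}$ summands indexed by the set $S\subseteq\{1,2,3,4\}$ of positions contributing their $V$-component, the others contributing their $A$-component; in the $S$-summand the operation $[\c,\c]_V$ is applied exactly $\max(|S|-1,0)$ times while the remaining $A$-entries are absorbed by $\rho$. I would match the two sides summand by summand according to $|S|$. For $S=\emptyset$ both sides reduce to Sagle's identity for $x_1,\dots,x_4$ in $(A,[\c,\c])$, and for $S=\{1,2,3,4\}$ to Sagle's identity for $a_1,\dots,a_4$ in $(V,[\c,\c]_V)$, both of which hold by assumption. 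For $|S|=1$ the summand is linear in the single surviving $a_j$ and involves only applications of $\rho$, so the required equality follows from the module identity \eqref{representation}, i.e. from $(V,\rho)$ being a representation. For $|S|=2$, once the two surviving $V$-entries are brought into standard position using the anti-symmetry of $[\c,\c]_V$, the equality of the two groups of terms is a permuted instance of the compatibility conditions \eqref{repKalgebras1} and \eqref{repKalgebras2}; and for $|S|=3$ it is a permuted instance of \eqref{repKalgebras3}. Note that the count of applications of $[\c,\c]_V$ matches: $0$ for $|S|=1$ as in \eqref{representation}, $1$ for $|S|=2$ as in \eqref{repKalgebras1}--\eqref{repKalgebras2}, and $2$ for $|S|=3$ as in \eqref{repKalgebras3}.

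The routine but genuinely lengthy part, and the place where I expect the main obstacle, is the bookkeeping for $|S|=2$ and $|S|=3$: expanding the nested brackets $[[[X_i,X_j]_{\r},X_k]_{\r},X_\ell]_{\r}$ produces many monomials of the shapes $\rho(\cdots)[\,\cdot\,,\cdot\,]_V$ and $[\rho(\cdots)\cdot\,,\cdot\,]_V$, and one has to collect them, relabel their $A$- and $V$-arguments, and invoke anti-symmetry repeatedly until each collected group coincides with one of \eqref{representation}, \eqref{repKalgebras1}, \eqref{repKalgebras2}, \eqref{repKalgebras3}. When the matching is done, every group vanishes, Sagle's identity for $[\c,\c]_{\r}$ follows, and together with anti-symmetry this proves that $(A\oplus V,[\c,\c]_{\r})$ is a Malcev algebra, i.e. the claimed semi-direct product; in fact the conditions \eqref{repKalgebras1}--\eqref{repKalgebras3} are precisely the obstruction to the Malcev identity for $[\c,\c]_{\r}$, so no further hypotheses on $(V;[\c,\c]_V,\rho)$ are required.
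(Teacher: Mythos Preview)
Your proposal is correct and follows essentially the same approach as the paper: both verify Sagle's identity for $[\c,\c]_{\r}$ by multilinear expansion and match the resulting terms against Sagle's identity in $A$ and in $V$, the representation identity \eqref{representation}, and the compatibility conditions \eqref{repKalgebras1}--\eqref{repKalgebras3}. The only difference is presentational: the paper carries out the full explicit expansion of the five nested brackets, whereas you organize the same computation by the number $|S|$ of $V$-components, which is a convenient bookkeeping device but not a genuinely different argument.
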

\begin{proof}
For $x,y,z,t\in A$ and $a,b,c,d\in V$,
  \begin{align*}
  &
  [[x+ a, z + c]_\r, [y + b, t + d]_\r]_\r=
  [[x, z], [y, t]] + \r([x, z])\r(y)d -  \r([x, z])\r(t)b\\
  &\quad + \r([x, z])[b, d]_V - \r([y, t])\r(x)c +  \r([y, t])\r(z)a - \r([y, t])[a, c]_V + [\r(x)c, \r(y)d]_V\\
   & \quad-[\r(x)c, \r(t)b]_V  + [\r(x)c, [b, d]_V ]_V- [\r(z)a, \r(y)d]_V + [\r(z)a, \r(t)b]_V - [\r(z)a, [b, d]_V ]_V\\
   & \quad+ [[a, c]_V, \r(y)d]_V -[[a, c]_V, \r(t)b]_V + [[a, c]_V, [b, d]_V]_V ,
\\[0.1cm]
   &[[[x + a, y + b]_\r, z+ c]_\r, t+ d]_\r= [[[x, y], z], t] + \r([[x, y], z])d - \r(t)\r([x, y])c\\
   &\quad +  \r(t)\r(z)\r(x)b - \r(t)\r(z)\r(y)a + \r(t)\r(z)[a, b]_V - \r(t)[\r(x)b, c]_V + \r(t)[\r(y)a, c]_V \\
   &\quad- \r(t)[[a, b]_V,c ]_V + [\r([x, y])c, d]_V - [\r(z)\r(x)b, d]_V + [\r(z)\r(y)a, d]_V - [\r(z)[a, b]_V, d]_V\\
   &\quad + [[\r(x)b, c]_V, d]_V - [[\r(y)a, c]_V, d]_V + [[[a, b]_V, c]_V, d]_V,
\\[0.1cm]
    &[[[y + b, z + c]_\r, t+ d]_\r, x+ a]_\r=[[[y, z], t], x] + \r([[y, z], t])a - \r(x)\r([y, z])d\\
    &\quad +  \r(x)\r(t)\r(y)c -  \r(x)\r(t)\r(z)b + \r(x)\r(t)[b, c]_V - \r(x)[\r(y)c, d]_V + \r(x)[\r(z)b, d]_V\\
    &\quad - \r(x)[[b, c]_V, d]_V + [\r([y, z])d, a]_V - [\r(t)\r(y)c, a]_V + [\r(t)\r(z)b, a]_V - [\r(t)[b, c]_V, a]_V\\
    &\quad + [[\r(y)c, d]_V, a]_V- [[\r(z)b, d]_V, a]_V + [[[b, c]_V, d]_V, a]_V,
\\[0.1cm]
    &[[[z + c, t + d]_\r, x+ a]_\r, y+ b]_\r=[[[z, t], x], y] + \r([[z, t], x])b - \r(y)\r([z, t])a\\
    &\quad +  \r(y)\r(x)\r(z)d -  \r(y)\r(x)\r(t)c + \r(y)\r(x)[c, d]_V - \r(y)[\r(z)d, a]_V + \r(y)[\r(t)c, a]_V\\
    &\quad - \r(y)[[c, d]_V, a]_V + [\r([z, t])a, b]_V - [\r(x)\r(z)d, b]_V + [\r(x)\r(t)c, b]_V - [\r(x)[c, d]_V , b]_V\\
    &\quad + [[\r(z)d, a]_V, b]_V - [[\r(t)c, a]_V , b]_V + [[[c, d]_V, a]_V , b]_V,
\\[0.1cm]
    &[[[t + d, x + a]_\r, y+ b]_\r, z+ c]_\r=[[[t, x], y], z] + \r([[t, x], y])c - \r(z)\r([t, x])b\\
    &\quad +  \r(z)\r(y)\r(t)a -  \r(z)\r(y)\r(x)d + \r(z)\r(y)[d, a]_V - \r(z)[\r(t)a, b]_V + \r(z)[\r(x)d, b]_V\\
    &\quad - \r(z)[[d, a]_V, b]_V + [\r([t, x])b, c]_V - [\r(y)\r(t)a, c]_V + [\r(y)\r(x)d, c]_V - [\r(y)[d, a]_V, c]_V\\
    &\quad + [[\r(t)a, b]_V, c]_V - [[\r(x)d, b]_V , c]_V + [[[d, a]_V, b]_V , c]_V.
    \end{align*}
    Then $A\oplus V$ is  a  Malcev algebra if and only if $(V,\rho)$ is a representation on $A$ satisfying \eqref{repKalgebras1}-\eqref{repKalgebras3}.
    \end{proof}
    \begin{ex}
It is known  that $(A,ad)$ is a representation of $A$ called the adjoint representation. Then $(A, [\cdot,\cdot],ad)$ is an $A$-module Malcev algebra.
\end{ex}
\begin{prop}\label{repalt=repmalc}
Let $(A,\c)$ be an alternative algebra. Then
the triplet $(V; [\c,\c]_V,\mathfrak{l}-\mathfrak{r})$ defines an $A$-module Malcev admissible algebra of $(A,[\c,\c])$.
\end{prop}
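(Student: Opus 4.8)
The plan is to reduce the claim entirely to the $A$-bimodule alternative algebra axioms and the Malcev-admissibility of $(A,\c)$, in complete parallel with the way an alternative bimodule $(V,\mathfrak{l},\mathfrak{r})$ produces a Malcev module via $\rho=\mathfrak{l}-\mathfrak{r}$. Concretely, set $\rho(x)=\mathfrak{l}(x)-\mathfrak{r}(x)$ and $[a,b]_V=a\c_V b-b\c_V a$. One first checks that $(V,\rho)$ is a representation of the Malcev algebra $(A,[\c,\c])$, i.e. that $\rho$ satisfies \eqref{representation}; this is the exact Malcev-algebra analogue of the classical fact that $\mathfrak{l}-\mathfrak{r}$ turns an associative (here: alternative) bimodule into a Lie (here: Malcev) module, and it uses only the four bimodule identities in Definition (representation of an alternative algebra) together with the alternative identities \eqref{alt1}, \eqref{alt2}. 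Then one verifies the three compatibility conditions \eqref{repKalgebras1}--\eqref{repKalgebras3} by substituting $\rho=\mathfrak{l}-\mathfrak{r}$, $[\,\cdot,\cdot\,]_V=\c_V-\c_V^{\mathrm{op}}$, expanding, and matching the resulting terms against linear combinations of the $A$-bimodule alternative algebra identities \eqref{repKaltalgebras1}--\eqref{repKaltalgebras4} (and the bimodule identities for $(V,\mathfrak{l},\mathfrak{r})$).

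A cleaner route, which I would actually take to avoid the brute-force bookkeeping, is to invoke the semidirect-product characterizations already established. By Proposition \ref{semidirectproductalt}, $(V,\c_V,\mathfrak{l},\mathfrak{r})$ being an $A$-bimodule alternative algebra is equivalent to $A\ltimes_{\mathfrak{l},\mathfrak{r}}V$ being an alternative algebra. That alternative algebra is then Malcev-admissible (by the Example following Remark \ref{remequivdef:altalg}), so its commutator bracket makes $A\oplus V$ a Malcev algebra. A direct computation of that commutator bracket on $A\oplus V$ gives exactly
\[
[x+a,\,y+b]=[x,y]+(\mathfrak{l}(x)-\mathfrak{r}(x))b-(\mathfrak{l}(y)-\mathfrak{r}(y))a+[a,b]_V,
\]
i.e. it is the bracket $[\,\cdot,\cdot\,]_\rho$ of \eqref{semidirectproductMalcev} with $\rho=\mathfrak{l}-\mathfrak{r}$. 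Hence $A\ltimes_{\mathfrak{l}-\mathfrak{r}}V$ (in the Malcev sense) is a Malcev algebra, and by the Proposition characterizing the Malcev semidirect product this is equivalent to $(V;[\,\cdot,\cdot\,]_V,\mathfrak{l}-\mathfrak{r})$ being an $A$-module Malcev algebra. One also notes that $(V,[\,\cdot,\cdot\,]_V)$ is itself a Malcev algebra, being the commutator algebra of the alternative algebra $(V,\c_V)$.

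The main obstacle is essentially notational rather than conceptual: one must make sure the two semidirect-product constructions (the alternative one of Proposition \ref{semidirectproductalt} and the Malcev one of \eqref{semidirectproductMalcev}) are compatible, namely that taking commutators of the alternative multiplication $\ast$ on $A\oplus V$ genuinely reproduces $[\,\cdot,\cdot\,]_\rho$ with $\rho=\mathfrak{l}-\mathfrak{r}$ and $[\,\cdot,\cdot\,]_V$ the commutator of $\c_V$; this is a one-line check from the formula for $\ast$. Given that, the equivalence ``$A$-bimodule alternative $\Rightarrow$ semidirect alternative $\Rightarrow$ (Malcev-admissible) semidirect Malcev $\Rightarrow$ $A$-module Malcev'' closes the argument without ever expanding \eqref{repKalgebras1}--\eqref{repKalgebras3} by hand. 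If one prefers a self-contained verification, the only genuinely delicate point is keeping track of the antisymmetrizations when expanding $[\rho(x)a,[b,c]_V]_V$ in \eqref{repKalgebras3}, where Sagle's identity for $(V,[\,\cdot,\cdot\,]_V)$ and identities \eqref{repKaltalgebras1}--\eqref{repKaltalgebras4} must be combined; the previous route sidesteps this.
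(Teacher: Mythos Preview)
Your ``cleaner route'' via the semidirect-product characterizations is exactly the proof given in the paper: invoke Proposition \ref{semidirectproductalt} to get that $A\ltimes_{\mathfrak{l},\mathfrak{r}}V$ is alternative, take its commutator bracket, identify it with $[\,\cdot,\cdot\,]_\rho$ for $\rho=\mathfrak{l}-\mathfrak{r}$, and conclude by \eqref{semidirectproductMalcev}. The one-line check of the commutator that you flag as the only real point is precisely the computation the paper writes out.
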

\begin{proof}
 By Proposition \ref{semidirectproductalt},
$A\ltimes_{\mathfrak{l},\mathfrak{r}} V$ is an alternative algebra. For its associated Malcev
 algebra $(A \oplus V, \overbrace{[\c, \c]})$, we have
\begin{align*}
 \overbrace{[x+a, y+b]}& =(x+a)\ast (y+b) - (y+b)\ast (x+a) \\
&= x \cdot y+ \mathfrak{l}(x)b+ \mathfrak{r}(y)a + a\c_V b- y \cdot x - \mathfrak{l}(y)a - \mathfrak{r}(x)b-b\c_V a  \\
&= [x, y] + (\mathfrak{l}-\mathfrak{r})(x)b - (\mathfrak{l} -\mathfrak{r})(y)a+[a,b]_{V}.
\end{align*}
According to \eqref{semidirectproductMalcev}, we deduce that $(V; [\c,\c]_V,\mathfrak{l}-\mathfrak{r})$ is an $A$-module Malcev admissible algebra of $(A,[\c, \c])$.
\end{proof}

\begin{df}
 Let $(A,[\c,\c])$ be a Malcev algebra and  $(V;  [\c,\c]_V,\r)$  be an $A$-module Malcev algebra. A linear map $T:V\to A$ is said to be a \textbf{$\lambda$-weighted $\mathcal{O}$-operator}
  associated to $(V; [\c,\c]_V,\r)$ if for all $a,b\in V$,
 \begin{align} \label{O-operatormalcev}
    &[T(a),T(b)]=T\big(\rho(T(a))b-\rho(T(b))a+\lambda[a,b]_V\big).
 \end{align}
\end{df}
Obviously, a $\lambda$-weighted $\mathcal{O}$-operator associated to $(A, [\c,\c],ad)$ is just  a $\lambda$-weighted Rota-Baxter operator on $A$.
A $\lambda$-weighted $\mathcal{O}$-operator can be viewed as the relative version of a Rota-Baxter operator in the sense
that the domain and range of an $\mathcal{O}$-operator might be different.
\begin{ex}\label{exam-wrb}
\begin{enumerate}[label*={\rm (\roman*)}]
\item A Rota-Baxter operator on $A$ is simply a $0$-weighted $\mathcal{O}$-operator.

\item The identity map $id : A \rightarrow A$ is a $(-1)$-weighted $\mathcal{O}$-operator.

\item If $f : A\rightarrow A$ is a Malcev algebra homomorphism and $f^2 = f$ (idempotent condition), then $f$ is a $(-1)$-weighted $\mathcal{O}$-operator.

\item If $T$ is a $\lambda$-weighted $\mathcal{O}$-operator, then for any $\nu \in \mathbb{K}$, the map $\nu T$ is a $(\nu \lambda)$-weighted $\mathcal{O}$-operator.

\item If $T$ is a $\lambda$-weighted $\mathcal{O}$-operator, then $- \lambda id
- T$ is a $\lambda$-weighted $\mathcal{O}$-operator.
\end{enumerate}
\end{ex}

In the following, we characterize $\lambda$-weighted $\mathcal{O}$-operators in terms of their graph.
\begin{prop}
Let   $(V; [\c,\c]_V,\r)$  be an $A$-module Malcev algebra. Then
 a linear map $T:V\rightarrow A$ is  a $\lambda$-weighted $\mathcal{O}$-operator  associated to $(V,[\c,\c]_V,\r)$ if and only if the graph $$\mathrm{Gr}(T)=\{T(a)+a|~a\in V\}$$ of the map $T$
is a  subalgebra of the  semi-direct product $A\ltimes_{\r} V$.
\end{prop}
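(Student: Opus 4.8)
The plan is to prove the two implications by a direct unraveling of the two structures involved. First I would take an element of $\mathrm{Gr}(T)$, namely $T(a)+a$ for $a\in V$, and compute the bracket of two such elements inside the semi-direct product $A\ltimes_\r V$ using the formula \eqref{semidirectproductMalcev}. Setting $x=T(a)$, $y=T(b)$ in that formula gives
\[
[T(a)+a,\ T(b)+b]_\r = [T(a),T(b)] + \rho(T(a))b - \rho(T(b))a + [a,b]_V.
\]
For $\mathrm{Gr}(T)$ to be a subalgebra, this element must again lie in $\mathrm{Gr}(T)$, i.e. its $A$-component must be the image under $T$ of its $V$-component. The $V$-component is $\rho(T(a))b-\rho(T(b))a+[a,b]_V$, so the subalgebra condition reads
\[
[T(a),T(b)] = T\big(\rho(T(a))b - \rho(T(b))a + [a,b]_V\big).
\]

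Here the main subtlety arises: the defining identity \eqref{O-operatormalcev} for a $\lambda$-weighted $\mathcal{O}$-operator carries the weight $\lambda$ in front of $[a,b]_V$, whereas the naive graph computation produces $[a,b]_V$ with coefficient $1$. So the correct statement requires the semi-direct product to be formed with the $V$-bracket rescaled by $\lambda$; that is, one should take the graph of $T$ in $A\ltimes_\r (V,\lambda[\cdot,\cdot]_V)$, or equivalently note that $(V,\lambda[\cdot,\cdot]_V,\rho)$ is again an $A$-module Malcev algebra (the rescaling is compatible with all three axioms \eqref{repKalgebras1}--\eqref{repKalgebras3}, since these are homogeneous of degree one in the $V$-bracket). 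I would point this out explicitly, since it is exactly parallel to the weighted situation for $\mathcal{O}$-operators on Lie algebras and to the alternative-algebra case treated in Section 3. With that convention in place, the $V$-component of the bracket becomes $\rho(T(a))b-\rho(T(b))a+\lambda[a,b]_V$ and the subalgebra condition becomes precisely \eqref{O-operatormalcev}.

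Granting the convention, both directions are then immediate from the displayed identity. For the forward direction, if $T$ satisfies \eqref{O-operatormalcev}, the computation above shows $[T(a)+a,T(b)+b]_\r = T(c)+c$ with $c=\rho(T(a))b-\rho(T(b))a+\lambda[a,b]_V\in V$, so $\mathrm{Gr}(T)$ is closed under the bracket, hence a subalgebra (it is visibly a linear subspace, being the image of the injective linear map $a\mapsto T(a)+a$). For the converse, if $\mathrm{Gr}(T)$ is a subalgebra, then for every $a,b\in V$ the element $[T(a)+a,T(b)+b]_\r$ lies in $\mathrm{Gr}(T)$; since an element $x+v$ of $A\oplus V$ belongs to $\mathrm{Gr}(T)$ if and only if $x=T(v)$, matching the $A$-component against $T$ applied to the $V$-component yields \eqref{O-operatormalcev}. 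The only real obstacle is bookkeeping the weight $\lambda$ correctly; no genuinely hard estimate or construction is needed, and the argument is essentially a one-line identification once the right semi-direct product is used.
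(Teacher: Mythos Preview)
Your approach is exactly the paper's: compute the semi-direct product bracket of $T(a)+a$ and $T(b)+b$, and observe that membership in $\mathrm{Gr}(T)$ is equivalent to \eqref{O-operatormalcev}. The paper's proof simply writes $\lambda[a,b]_V$ in that computation without comment (tacitly using the rescaled $V$-bracket), so your explicit discussion of the weight discrepancy is in fact more careful than the original.
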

\begin{proof}
 Let $T:V\rightarrow A$ be a linear map. For all $a,b\in V$, we have
 \begin{align*}
    [T(a)+a, T(b)+b]_{\rho}=&[T(a),T(b)]+ \rho(T(a))b-\rho(T(b))a+\lambda[a,b]_V,
 \end{align*}
 which implies that the graph $\mathrm{Gr}(T)=\{T(a)+a|~a\in V\}$ is a  subalgebra of  the Malcev algebra $A\ltimes V$ if and only if
 $T$ satisfies
 \begin{align*}
    &[T(a),T(b)]=T(\rho(T(a))b-\rho(T(b))a+\lambda[a,b]_V,
 \end{align*}
 which means that $T$ is a  $\lambda$-weighted $\mathcal{O}$-operator.
\end{proof}
As a consequence of the above proposition, we get the following.
\begin{cor}\label{cor:induced-malc}
Let $T : V \rightarrow A$ be a $\lambda$-weighted $\mathcal{O}$-operator. Since $\mathrm{Gr} (T)$ is isomorphism to $V$ as a vector space, we get that $V$ inherits a new Malcev algebra structure with the bracket
\begin{align*}
[a,b]_T : = \rho (T(a))b - \rho (T(b))a + \lambda [a,b]_V, ~ \text{ for } a, b \in V.
\end{align*}
In other words, $(V, [\c,\c]_T)$ is a Malcev algebra, denoted by $V_T$ (called the induced Malcev algebra). Moreover, $T: V_T \rightarrow A$ is a homomorphism of Malcev algebras.
\end{cor}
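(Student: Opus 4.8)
The plan is to obtain $(V,[\c,\c]_T)$ by transporting, along the canonical linear isomorphism, the Malcev algebra structure that $\mathrm{Gr}(T)$ already carries as a subalgebra of the semi-direct product $A\ltimes_\rho V$. First I would record that, by the previous proposition, the hypothesis that $T$ is a $\lambda$-weighted $\mathcal{O}$-operator is exactly equivalent to $\mathrm{Gr}(T)=\{T(a)+a\mid a\in V\}$ being a subalgebra of $A\ltimes_\rho V$; in particular $\mathrm{Gr}(T)$, equipped with the restricted bracket $[\c,\c]_\rho$, is itself a Malcev algebra. Then I would observe that the linear map $\phi:V\to\mathrm{Gr}(T)$, $a\mapsto T(a)+a$, is a bijection: surjectivity is immediate from the definition of $\mathrm{Gr}(T)$, and injectivity follows since the $V$-component of $T(a)+a$ equals $a$. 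Hence $\phi$ is an isomorphism of vector spaces, which is the identification mentioned in the statement.

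The second step is the bracket computation. As in the proof of the previous proposition, for $a,b\in V$ one has $[\phi(a),\phi(b)]_\rho=[T(a),T(b)]+\big(\rho(T(a))b-\rho(T(b))a+\lambda[a,b]_V\big)$, where the first summand lies in $A$ and the parenthesised term lies in $V$. Setting $c:=\rho(T(a))b-\rho(T(b))a+\lambda[a,b]_V$, the defining identity \eqref{O-operatormalcev} of a $\lambda$-weighted $\mathcal{O}$-operator says precisely that $[T(a),T(b)]=T(c)$, so $[\phi(a),\phi(b)]_\rho=T(c)+c=\phi(c)$. Therefore the bracket on $V$ obtained by pulling back $[\c,\c]_\rho|_{\mathrm{Gr}(T)}$ through $\phi$ is $[a,b]_T:=\phi^{-1}\big([\phi(a),\phi(b)]_\rho\big)=c$, which is exactly the formula asserted. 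Since a bracket transported along a linear isomorphism from a Malcev algebra again satisfies the Malcev identity (equivalently Sagle's identity), $(V,[\c,\c]_T)=V_T$ is a Malcev algebra and $\phi:V_T\to\mathrm{Gr}(T)\subseteq A\ltimes_\rho V$ is an isomorphism of Malcev algebras.

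Finally, the homomorphism assertion is immediate from \eqref{O-operatormalcev} applied once more: $T([a,b]_T)=T\big(\rho(T(a))b-\rho(T(b))a+\lambda[a,b]_V\big)=[T(a),T(b)]$, so $T:V_T\to A$ preserves brackets. I do not expect any genuine obstacle here: all of the substance is already packaged in the previous proposition, which converts the $\mathcal{O}$-operator axiom into the closure of $\mathrm{Gr}(T)$ under $[\c,\c]_\rho$. The only point requiring care is the bookkeeping of the $A$- and $V$-components in the semi-direct product, and checking that the $A$-component $[T(a),T(b)]$ is precisely $T$ applied to the $V$-component $c$ — which is exactly where \eqref{O-operatormalcev} enters.
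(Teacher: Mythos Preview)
Your proposal is correct and follows exactly the approach the paper intends: the paper gives no separate proof of this corollary, treating it as an immediate consequence of the preceding proposition (that $\mathrm{Gr}(T)$ is a subalgebra of $A\ltimes_\rho V$), and you have simply spelled out the transport-of-structure argument and the verification that $T$ preserves brackets. There is nothing to add.
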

Let $T, T': (A,[\cdot,\cdot])\to (V,[\cdot,\cdot]_V)$ be two $\lambda$-weighted $\mathcal{O}$-operators. A $\textbf{homomorphism}$ from $T$ to $T'$ consists of Malcev algebra homomorphisms $\phi: A\to A$ and   $\psi: V\to V$ such that
\begin{eqnarray}
 \label{condition-1}\phi\circ T&=&T'\circ\psi,\\
  \label{condition-2}\psi(\rho(x)a)&=&\rho(\phi(x))(\psi(a)),\quad \forall x\in A, a\in V.
\end{eqnarray}
In particular, if both $\phi$ and $\psi$ are invertible, $(\phi, \psi)$ is called an \textbf{ isomorphism} from $T$ to $T'$.
\begin{prop}
Let $(\phi, \psi)$ be a homomorphism of $\lambda$-weighted $\mathcal{O}$-operators from $T$ to $T'$. Then $\psi: V \rightarrow V$ is a homomorphism of induced Malcev algebras from $(V, [\c,\c]_T)$ to $(V, [\c,\c]_{T'})$.
\end{prop}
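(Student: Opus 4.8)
The plan is to verify directly that $\psi$ intertwines the two induced brackets, i.e.\ that $\psi([a,b]_T) = [\psi(a),\psi(b)]_{T'}$ for all $a,b\in V$, using only the description of the induced bracket from Corollary~\ref{cor:induced-malc}, the two defining conditions \eqref{condition-1} and \eqref{condition-2} of a homomorphism of $\lambda$-weighted $\mathcal{O}$-operators, and the assumption that $\psi$ is itself a Malcev algebra homomorphism. Since $\psi$ is already linear, once this identity is established the proposition follows.

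First I would expand the left-hand side by the formula $[a,b]_T = \rho(T(a))b - \rho(T(b))a + \lambda[a,b]_V$ and use linearity of $\psi$ to get
\[
\psi([a,b]_T) = \psi(\rho(T(a))b) - \psi(\rho(T(b))a) + \lambda\,\psi([a,b]_V).
\]
Next, to each of the first two terms I would apply \eqref{condition-2}, namely $\psi(\rho(x)c) = \rho(\phi(x))\psi(c)$, with $x=T(a)$ and $x=T(b)$, turning them into $\rho(\phi(T(a)))\psi(b)$ and $\rho(\phi(T(b)))\psi(a)$; then \eqref{condition-1}, i.e.\ $\phi\circ T = T'\circ\psi$, replaces $\phi(T(a))$ by $T'(\psi(a))$ and $\phi(T(b))$ by $T'(\psi(b))$. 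For the remaining term, since $\psi$ is a Malcev algebra homomorphism we have $\psi([a,b]_V) = [\psi(a),\psi(b)]_V$. Collecting the pieces gives
\[
\psi([a,b]_T) = \rho(T'(\psi(a)))\psi(b) - \rho(T'(\psi(b)))\psi(a) + \lambda[\psi(a),\psi(b)]_V,
\]
which is precisely $[\psi(a),\psi(b)]_{T'}$ by the definition of the Malcev algebra $V_{T'}$ induced by $T'$.

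I do not expect any genuine obstacle here: the statement is a purely formal consequence of the two compatibility conditions. The only point worth keeping in mind is the bookkeeping of which hypothesis handles which summand of the induced bracket: the $\rho$-terms are controlled by \eqref{condition-1} and \eqref{condition-2}, whereas the $\lambda[\c,\c]_V$-term is handled by the assumption that $\psi$ respects $[\c,\c]_V$. In particular, the hypothesis that $\phi$ is a Malcev algebra homomorphism is not used in this proof.
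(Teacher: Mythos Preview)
Your proof is correct and follows exactly the same direct computation as the paper: expand $[a,b]_T$, apply \eqref{condition-2} and then \eqref{condition-1} to the $\rho$-terms, and use that $\psi$ preserves $[\c,\c]_V$ for the remaining term. Your additional remark that the Malcev-homomorphism property of $\phi$ is never invoked is accurate and not mentioned in the paper.
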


\begin{proof}
For any $a, b \in V$, we have
\begin{align*}
\psi ([a,b]_T) =~& \psi \big( \rho (T(a))b - \rho (T(b))a + \lambda [a,b]_V  \big) \\
=~& \rho (\phi(T (a)))(\psi (b)) - \rho (\phi(T (b)))(\psi (a)) + \lambda [\psi (a), \psi (b)]_V \\
=~& \rho(T' (\psi (a)))(\psi (b))  - \rho(T'(\psi (b)))(\psi (a)) + \lambda [\psi (a), \psi (b)]_V = [ \psi (a), \psi (b) ]_{T'}.
\end{align*}
This shows that $\psi : (V, [\c,\c]_T) \rightarrow (V, [\c,\c]_{T'})$ is a homomorphism of Malcev algebras.
\end{proof}
In the sequel, we characterize $\lambda$-weighted $\mathcal{O}$-operators associated to $(V; [\c,\c]_V,\r)$ in terms of the Nijenhuis operators.  Recall  that a Nijenhuis operator on a Malcev algebra $(A,[\c,\c])$ is a linear map $N:A\rightarrow A$  satisfying, for all  $x,y\in A$,   $$[N(x),N(y)]=N\big([N(x),y]-[N(y),x]-N([x,y])\big).$$
\begin{prop}
Let  $(V; [\c,\c]_V,\r)$  be an $A$-module Malcev algebra. Then
 a linear map $T:V\rightarrow A$ is  a $\lambda$-weighted $\mathcal{O}$-operator  associated to $(V; [\c,\c]_V,\r)$ if and only if
$$N_T=\begin{bmatrix}
   id & T \\
    0  & 0
\end{bmatrix}:A\oplus V\rightarrow A\oplus V $$
is a Nijenhuis operator on the semidirect product Malcev algebra $A\ltimes V$.
\end{prop}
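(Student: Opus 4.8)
The plan is to expand the Nijenhuis identity for $N_T$ over the semi-direct product Malcev algebra $A\ltimes_\rho V$ — which is a Malcev algebra precisely because $(V;[\c,\c]_V,\rho)$ is an $A$-module Malcev algebra, by the proposition establishing the bracket \eqref{semidirectproductMalcev} — and to check that it collapses to the defining relation \eqref{O-operatormalcev} of a $\lambda$-weighted $\mathcal{O}$-operator. First I would make $N_T$ explicit on the decomposition $A\oplus V$: for $x\in A$ and $a\in V$ one has $N_T(x+a)=x+T(a)\in A$. So $N_T$ takes values in the subalgebra $A$, and it is idempotent, $N_T^2=N_T$; concretely it is the projection of $A\ltimes V$ onto $A$ along $\ker N_T=\{\,a-T(a)\mid a\in V\,\}$.

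For the direct verification I would fix $u=x+a$ and $v=y+b$ in $A\ltimes V$ and compute, using \eqref{semidirectproductMalcev} and the linearity of $\rho$ and of $T$, the four elements $[N_T u,N_T v]_\rho$, $[N_T u,v]_\rho$, $[N_T v,u]_\rho$ and $N_T([u,v]_\rho)$, recording their $A$- and $V$-components separately. Since $N_T u,\,N_T v\in A$, the first reduces to $[x+T(a),\,y+T(b)]\in A$; and every application of $N_T$ lands in $A$, so all $V$-components in sight vanish and the $V$-block imposes nothing. Assembling $N_T\big([N_T u,v]_\rho-[N_T v,u]_\rho-N_T([u,v]_\rho)\big)$ and equating with $[N_T u,N_T v]_\rho$ in the $A$-block, every term containing $x$ or $y$ appears identically on both sides and cancels — this is exactly where the ``identity block'' shape of $N_T$ is used — leaving
\[
[T(a),T(b)]=T\big(\rho(T(a))b-\rho(T(b))a+\lambda[a,b]_V\big)\qquad\text{for all }a,b\in V,
\]
that is, \eqref{O-operatormalcev}. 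Hence $N_T$ is a Nijenhuis operator on $A\ltimes V$ if and only if $T$ is a $\lambda$-weighted $\mathcal{O}$-operator.

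A shorter, essentially computation-free route is also available: since $N_T^2=N_T$, a standard fact valid for any algebra with anticommutative product (proved using only bilinearity and anticommutativity) says that an idempotent $N$ is a Nijenhuis operator iff both $\mathrm{Im}\,N$ and $\ker N$ are subalgebras. Here $\mathrm{Im}\,N_T=A$ is automatically a subalgebra of $A\ltimes V$, while $\ker N_T=\{\,a-T(a)\mid a\in V\,\}=\mathrm{Gr}(-T)$; by the preceding characterization of $\lambda$-weighted $\mathcal{O}$-operators via their graph, this kernel is a subalgebra exactly when $-T$ is a $\lambda$-weighted $\mathcal{O}$-operator, which — after matching the sign convention on the weight, cf. Example \ref{exam-wrb} — is the assertion of the proposition.

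Neither argument hides a real difficulty. The first is routine multilinear bookkeeping, and the only step demanding attention is confirming that every $x$- and $y$-dependent contribution genuinely cancels in the $A$-block, so that what survives is purely an identity for $T$ on $V$; one must also keep the normalization of the $\lambda[a,b]_V$-term in \eqref{semidirectproductMalcev} and in \eqref{O-operatormalcev} consistent throughout. That bookkeeping is the main — and quite mild — obstacle.
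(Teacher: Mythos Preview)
Your first, computational route is exactly what the paper does: expand both sides of the Nijenhuis identity for $N_T$ on general elements $x+a,\,y+b$ of $A\ltimes V$, use $N_T^2=N_T$, and observe that all $x,y$-dependent contributions cancel so that only \eqref{O-operatormalcev} survives.

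Your second route, through the image/kernel criterion for idempotent Nijenhuis operators, is not in the paper and is cleaner: it replaces the line-by-line expansion with the single observation that $\mathrm{Im}\,N_T=A$ and $\ker N_T=\mathrm{Gr}(-T)$, and then invokes the graph characterization of $\mathcal{O}$-operators proved just above. That makes the logical dependence on the preceding proposition visible and reduces the argument to two invocations rather than one long display.

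One genuine point, though, is the normalization you flag. With the semidirect bracket \eqref{semidirectproductMalcev} as written (no $\lambda$ on $[a,b]_V$), the computation actually produces
\[
[T(a),T(b)]=T\big(\rho(T(a))b-\rho(T(b))a-[a,b]_V\big),
\]
i.e.\ the $(-1)$-weighted condition; and your kernel argument agrees, since $-T$ being an $\mathcal{O}$-operator of weight $\mu$ is equivalent (Example~\ref{exam-wrb}(iv)) to $T$ being one of weight $-\mu$, not $\mu$. The paper's own proof silently inserts a factor $\lambda$ into the $V$-component of the semidirect bracket; that is what makes its last line match \eqref{O-operatormalcev}. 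So your hedge ``after matching the sign convention on the weight'' is not a harmless convention fix but is absorbing an inconsistency between \eqref{semidirectproductMalcev} and the way it is used here (and in the graph proposition). To make the statement literally correct one must either replace $[a,b]_V$ by $-\lambda[a,b]_V$ in the semidirect bracket or restrict to a specific weight; your proof sketch is otherwise sound.
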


\begin{proof}
For all $x,y\in A,$ $a,b\in V$, on the one hand, we have
\begin{align*}
& [N_T(x+a),N_T(y+b)]_{\rho}= [x+T(a),y+T(b)]_{\rho}=[x,y]+[x,T(b)]-[T(a), y] + [T(a),T(b)].
\end{align*}
On the other hand, since $\overline{T}^2=\overline{T},$ we have
\begin{align*}
&
N_T\big([N_T(x+a),y+b]_{\rho}-[N_T(y+b),x+a]_{\rho}-N_T([x+a,y+b]_{\rho})\big)\\
&=N_T\big([N_T(x+a),y+b]_{\rho}-[N_T(y+b),x+a]_{\rho}\big)-N_T([x+a,y+b]_{\rho})\\
& =N_T \big([x +(a),y+b]-[y+T(b),x+a]\big)-N_T\big([x,y]+\r(x)b-\r(y)a+\lambda[a,b]_V\big)\\
& =[x,y]+[x,T(b)]-[T(a), y]+ T\big(\rho(T(a))b-\rho(T(b))a +\lambda[a,b]_V\big),
\end{align*}
which implies that $N_T$ is a Nijenhuis operator on the semi-direct product Malcev algebra $A\ltimes V$ if and only if \eqref{O-operatormalcev} is satisfied.
\end{proof}

\subsection{Definition and Constructions of post-Malcev algebras}
In this section, we introduce the notion of post-Malcev algebras. We show that post-Malcev algebras arise naturally from a $\lambda$-weighted $\mathcal{O}$-operators. Therefore, post-Malcev algebras can be viewed as the underlying
algebraic structures of $\lambda$-weighted $\mathcal{O}$-operators  on Malcev algebras. Finally, we study some properties of post-Malcev algebras.

\begin{df}\label{def:postmalcev}
A $\textbf{post-Malcev algebra}$ $(A,[\c,\c],\rhd)$ is a Malcev algebra $(A,[\c,\c])$
together with a bilinear map $\rhd : A\otimes A \to A$ such that  for all $x,y,z \in A$,
and $\{x, y\} = x\rhd y- y\rhd x + [x, y]$,
\begin{align}
\label{postmalcevalgebras1}
&\{x, z\}\rhd [y, t] = x\rhd[z\rhd y, t]- [z\rhd(x\rhd y), t] - [x\rhd(z\rhd t), y] + z\rhd [x\rhd t, y],
\\
\label{postmalcevalgebras2}
&[x\rhd z, y\rhd t]= [\{x, y\}\rhd z, t] -  x\rhd[y\rhd z, t] +  y\rhd(x\rhd[z, t]) + [y\rhd(x\rhd t), z], \\
\label{postmalcevalgebras3}
&[x\rhd z, [y, t]] = [[x\rhd y, z], t] - x\rhd [[y, z], t] - [x\rhd[z, t], y] - [[x\rhd t, y], z],
\\
\label{postmalcevalgebras4}
&\{y,z\}\rhd(x\rhd t)=\{\{x,y\},z\}\rhd t+y\rhd(\{x,z\}\rhd t)+x\rhd(y\rhd( z\rhd t))-z\rhd(x\rhd( y\rhd t)).
\end{align}
\end{df}
\begin{ex}~~
\begin{enumerate}
\item A pre-Malcev algebra is a post-Malcev algebra with an abelian Malcev algebra $(A, [\cdot, \cdot] = 0, \rhd)$.
\textup{(}See   \textup{\cite{Fattoum,Madariaga}} for more details.\textup{)}
  \item Post-Malcev algebras generalize post-Lie algebras.
  \item  If $(A,[\c,\c])$ is a  Malcev algebra, then $(A,[\c,\c],\rhd)$ is a post-Malcev algebra, where $x\rhd y=[y,x]$ for all $x,y\in A$.
\end{enumerate}
\end{ex}

Let $(A, [\cdot, \cdot], \rhd)$ and $(A', [\cdot, \cdot]' , \rhd')$ be two post-Malcev algebras. A
homomorphism of post-Malcev algebras is a linear map $f : A \to A'$ such that
$ f([x, y]) = [f(x), f(y)]'$ and $f(x \rhd y) = f(x) \rhd' f(y).$
\begin{prop}\label{postmalcev==>malcev}
Let $(A,[\c,\c],\rhd)$ be a post-Malcev algebra. Then the bracket
\begin{equation}\label{malcevbracket}
      \{x,y\}=x\rhd y- y\rhd x+ [x,y]
\end{equation}
defines a Malcev algebra structure on $A$. We denote this algebra by $A^C$ and we call it the sub-adjacent  Malcev algebra of $A$.
\end{prop}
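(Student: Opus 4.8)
The plan is to verify directly that the bracket $\{x,y\} = x\rhd y - y\rhd x + [x,y]$ satisfies the Malcev identity \eqref{maltsev}, namely $J_{\{\cdot,\cdot\}}(x,y,\{x,z\}) = \{J_{\{\cdot,\cdot\}}(x,y,z),x\}$, where $J_{\{\cdot,\cdot\}}$ is the Jacobian formed with $\{\cdot,\cdot\}$. Antisymmetry of $\{\cdot,\cdot\}$ is immediate from its definition, so the entire content lies in the Malcev identity. Rather than expanding everything in terms of $\rhd$ from the start, I would first expand $J_{\{\cdot,\cdot\}}(x,y,z)$ in the form
\[
J_{\{\cdot,\cdot\}}(x,y,z) = \{\{x,y\},z\} + \{\{z,x\},y\} + \{\{y,z\},x\},
\]
keeping some brackets in the "mixed" form $\{u\rhd v,\,w\}$, $\{u,\,v\rhd w\}$, etc., so that the four defining axioms \eqref{postmalcevalgebras1}--\eqref{postmalcevalgebras4} can be recognized as substitutable pieces. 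The Sagle identity form quoted after \eqref{maltsev} may be more convenient than \eqref{maltsev} itself, since \eqref{postmalcevalgebras1}--\eqref{postmalcevalgebras4} are already written with four-term right-hand sides reminiscent of Sagle's identity; I would aim to show
\[
[[u,w]_{\{\cdot\}},[v,t]_{\{\cdot\}}]_{\{\cdot\}} = [[[u,v]_{\{\cdot\}},w]_{\{\cdot\}},t]_{\{\cdot\}} + [[[v,w]_{\{\cdot\}},t]_{\{\cdot\}},u]_{\{\cdot\}} + [[[w,t]_{\{\cdot\}},u]_{\{\cdot\}},v]_{\{\cdot\}} + [[[t,u]_{\{\cdot\}},v]_{\{\cdot\}},w]_{\{\cdot\}},
\]
writing $[\cdot,\cdot]_{\{\cdot\}}$ for $\{\cdot,\cdot\}$.

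The key steps, in order: (1) substitute $\{x,y\} = x\rhd y - y\rhd x + [x,y]$ into both sides and expand fully, organizing the resulting terms by "type" — purely bracketed terms $[[[\cdot,\cdot],\cdot],\cdot]$, purely $\rhd$-terms such as $u\rhd(v\rhd(w\rhd t))$, and mixed terms with one or two occurrences of $[\cdot,\cdot]$ interleaved with $\rhd$; (2) the purely bracketed terms cancel among themselves by the Sagle identity for $(A,[\cdot,\cdot])$, which holds since $(A,[\cdot,\cdot])$ is assumed to be a Malcev algebra; (3) collect the remaining terms into groups matching the left-hand sides of \eqref{postmalcevalgebras1}, \eqref{postmalcevalgebras2}, \eqref{postmalcevalgebras3} and \eqref{postmalcevalgebras4}, and replace each such group by the corresponding right-hand side; (4) check that after these substitutions everything cancels. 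A cleaner alternative I would try first is to use Theorem \ref{cybe==oop}/Corollary \ref{cor:induced-malc}: if $(A,[\cdot,\cdot],\rhd)$ arises from a $\lambda$-weighted $\mathcal{O}$-operator $T$ (with $x\rhd y = \rho(T(x))y$ and $\lambda[\cdot,\cdot]_V$ the bracket), then $\{x,y\}$ is exactly the induced bracket $[\cdot,\cdot]_T$ on $V_T$, which Corollary \ref{cor:induced-malc} already certifies to be Malcev; but since the definition of post-Malcev algebra is abstract and not every post-Malcev algebra need a priori come from an $\mathcal{O}$-operator, the honest route is the direct verification, and I expect the paper takes that route.

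The main obstacle is purely combinatorial bookkeeping: the fully expanded identity has on the order of a hundred terms, and the four axioms \eqref{postmalcevalgebras1}--\eqref{postmalcevalgebras4} must each be applied, possibly with permuted arguments (cyclic relabelings $x\to y\to z\to x$ and the swap built into the Malcev/Sagle identity), to hit every group. The delicate point is matching the correct permutation of variables in each axiom to the group of terms at hand — several groups will look structurally identical and differ only in which variable plays which role, so it is easy to misapply an axiom. I would mitigate this by fixing once and for all the cyclic and reflection symmetries of the Sagle identity, expanding $J_{\{\cdot,\cdot\}}(x,y,z)$ symbolically, and then doing the substitutions in a fixed order (axiom \eqref{postmalcevalgebras3} first, since it has no $\{\cdot,\cdot\}$ on the left and is easiest to spot, then \eqref{postmalcevalgebras1}, \eqref{postmalcevalgebras2}, and finally \eqref{postmalcevalgebras4}), checking at each stage that the count of remaining non-bracketed terms strictly decreases.
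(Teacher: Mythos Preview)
Your proposal is correct and matches the paper's approach essentially exactly: the paper also verifies Sagle's identity for $\{\cdot,\cdot\}$ by fully expanding the five quadruple brackets $\{\{x,z\},\{y,t\}\}$, $\{\{\{x,y\},z\},t\}$, $\{\{\{y,z\},t\},x\}$, $\{\{\{z,t\},x\},y\}$, $\{\{\{t,x\},y\},z\}$ in terms of $\rhd$ and $[\cdot,\cdot]$, and then invokes the Malcev identity for $[\cdot,\cdot]$ together with the four post-Malcev axioms \eqref{postmalcevalgebras1}--\eqref{postmalcevalgebras4} to conclude the difference vanishes. Your anticipated bookkeeping difficulty is real --- the paper simply displays the five expansions and asserts cancellation without exhibiting the term-by-term grouping you describe --- so your plan of organizing terms by type and applying the axioms in a fixed order is, if anything, more detailed than what the paper records.
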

\begin{proof}
The skew symmetry  is obvious. For all $x, y, z, t \in A$, we have
\begin{align*}
&
\{\{x, z\}, \{y, t\}\} = \{x, z\}\rhd\{y, t\}-\{y, t\}\rhd\{x, z\}+[\{x, z\},\{y, t\}]\\
&=\{x,z\}\rhd(y\rhd t)-\{x,z\}\rhd(t\rhd y)+\{x, z\}\rhd[y, t] - \{y, t\}\rhd(x\rhd z)\\
& \quad+ \{y, t\}\rhd(z\rhd x)-\{y, t\}\rhd[x, z]+[x\rhd z, y\rhd t]- [x\rhd z, t\rhd y]\\
&\quad+ [x\rhd z, [y, t]] - [z\rhd x, y\rhd t]+ [z\rhd x, t\rhd y] - [z\rhd x, [y, t]]\\
&\quad+[[x, z], y\rhd t]-[[x, z], t\rhd y]+ [[x, z], [y, t]],
\\[0,1cm]
 &\{\{\{x, y\}, z\}, t\}= \{\{x, y\}, z\}\rhd t- t\rhd\{\{x, y\}, z\}+ [\{\{x, y\}, z\},t]\\
 &=\{\{x, y\}, z\}\rhd t- t\rhd(\{x, y\}\rhd z)+ t\rhd(z\rhd(x\rhd y))- t\rhd(z\rhd(y\rhd x))\\
 &\quad+ t\rhd(z\rhd[x, y])- t\rhd[x\rhd y, z]+  t\rhd[y\rhd x, z]-t\rhd[[x, y], z]\\
 &\quad+ [\{x, y\}\rhd z, t]- [z\rhd(x\rhd y), t]+ [z\rhd(y\rhd x), t]- [z\rhd[x, y], t]\\
 &\quad + [[x\rhd y,z], t] - [[y\rhd x,z], t]  + [[[x, y], z], t],
\\[0,1cm]
  &\{\{\{y, z\}, t\}, x\}= \{\{y, z\}, t\}\rhd x-  x\rhd\{\{y, z\}, t\}+[\{\{y, z\}, t\},x]\\
  &=\{\{y, z\}, t\}\rhd x- x\rhd(\{y, z\}\rhd t) + x\rhd(t\rhd(y\rhd z)) - x\rhd(t\rhd(z\rhd y))\\
  &\quad+ x\rhd(t\rhd[y, z])- x\rhd[y\rhd z, t]+  x\rhd[z\rhd y, t]-x\rhd[[y, z], t]\\
  &\quad+ [\{y, z\}\rhd t, x]- [t\rhd(y\rhd z), x]+ [t\rhd(z\rhd y), x]-[t\rhd [y,z],x]\\
  & \quad+ [[y\rhd z,t], x]- [[z\rhd y, t], x] + [[[y, z], t], x],
\\[0,1cm]
  &\{\{\{z, t\}, x\}, y\}=\{\{z, t\}, x\}\rhd y-y\rhd \{\{z, t\}, x\}+ [\{\{z, t\}, x\},y]\\
   &=\{\{z, t\}, x\}\rhd y- y\rhd(\{z, t\}\rhd x)+ y\rhd(x\rhd(z\rhd t))- y\rhd(x\rhd(t\rhd z))\\
   &\quad+ y\rhd(x\rhd[z, t])- y\rhd[z\rhd t, x] +  y\rhd[t\rhd z, x]-y\rhd[[z, t], x]\\
   &\quad+ [\{z, t\}\rhd x, y]- [x\rhd(z\rhd t), y]+ [x\rhd(t\rhd z), y]- [x\rhd[z, t], y]\\
   &\quad+ [[z\rhd t,x], y]- [[t\rhd z, x], y] + [[[z, t], x], y],
\\[0,1cm]
  &\{\{\{t, x\}, y\}, z\}=\{\{t, x\}, y\}\rhd  z-z\rhd \{\{t, x\}, y\} +[\{\{t, x\}, y\},z]\\
   &=\{\{t, x\}, y\}\rhd  z- z\rhd(\{t, x\}\rhd y)+ z\rhd(y\rhd(t\rhd x))- z\rhd(y\rhd(x\rhd t))\\
   &\quad+ z\rhd(y\rhd[t, x])- z\rhd[t\rhd x, y]+  z\rhd[x\rhd t, y]-z\rhd[[t, x], y]\\
   &\quad+ [\{t, x\}\rhd y, z]- [y\rhd(t\rhd x), z]+ [y\rhd(x\rhd t), z]- [y\rhd[t, x], z]\\
   &\quad+ [[t\rhd x, y], z]- [[x\rhd t, y], z]+ [[[t, x], y], z].
\end{align*}
By the identity of Malcev algebra and  \eqref{postmalcevalgebras1}-\eqref{postmalcevalgebras4},
we have
\begin{equation*}
\{\{x, z\}, \{y, t\}\}- \{\{\{x, y\}, z\}, t\} - \{\{\{y, z\}, t\}, x\} - \{\{\{z, t\}x\}, y\} - \{\{\{t, x\}, y\}, z\} = 0.
\qedhere
\end{equation*}
\end{proof}
\begin{rem}
Let $(A,[\c,\c],\rhd)$ be a post-Malcev algebra. If $\rhd$ is commutative, $x\rhd y = y\rhd x$, then the two Malcev brackets
 $[\c,\c]$ and $\{\c,\c\}$ coincide.
\end{rem}
\begin{cor}
If $(A,[\c,\c],\rhd)$ be a post-Malcev algebra, then
$(A,\circ)$ is an admissible Malcev algebra,
with the product $\circ$ defined for all $x,y\in A$ by
  \begin{equation}\label{Admissibilemalcev}
    x\circ y=x\rhd y+\frac{1}{2}[x,y].
  \end{equation}
\end{cor}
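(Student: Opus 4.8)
The statement to prove is that, given a post-Malcev algebra $(A,[\c,\c],\rhd)$, the product $x\circ y = x\rhd y + \tfrac12[x,y]$ makes $(A,\circ)$ an \emph{admissible} Malcev algebra, i.e. the commutator bracket $[x,y]_\circ := x\circ y - y\circ x$ satisfies the Malcev identity. The first and essential observation is that
\[
x\circ y - y\circ x = \Big(x\rhd y + \tfrac12[x,y]\Big) - \Big(y\rhd x + \tfrac12[y,x]\Big) = x\rhd y - y\rhd x + [x,y] = \{x,y\},
\]
using the skew-symmetry of $[\c,\c]$. So the commutator bracket of $\circ$ is precisely the sub-adjacent bracket $\{\c,\c\}$. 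But Proposition \ref{postmalcev==>malcev} already tells us that $(A,\{\c,\c\})$ is a Malcev algebra. Hence $(A,\circ)$ is Malcev-admissible essentially by definition, and the proof is a one-line reduction to the previously established proposition.

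\medskip

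\noindent Concretely, I would write: by definition of admissibility we must check that $(A,[\c,\c]_\circ)$ with $[x,y]_\circ = x\circ y - y\circ x$ is a Malcev algebra. Compute $[x,y]_\circ$ as above to get $[x,y]_\circ = \{x,y\}$, the bracket of \eqref{malcevbracket}. Then invoke Proposition \ref{postmalcev==>malcev}, which asserts that $\{\c,\c\}$ defines a Malcev algebra structure on $A$ (the sub-adjacent Malcev algebra $A^C$). Therefore $(A,\circ)$ is an admissible Malcev algebra, with associated Malcev algebra $A^C$.

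\medskip

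\noindent There is essentially no obstacle here: the only things to be careful about are (i) the factor $\tfrac12$ is exactly what is needed so that the symmetric parts cancel and one recovers $[x,y]+x\rhd y-y\rhd x$ rather than $\tfrac12[x,y]+x\rhd y-y\rhd x$ — this is why $\tfrac12$ (and not some other scalar) appears in \eqref{Admissibilemalcev}; and (ii) one should state clearly that ``admissible Malcev algebra'' means the algebra whose commutator satisfies the Malcev identity, matching the convention introduced just before the Example on alternative algebras being Malcev-admissible. No expansion of the four post-Malcev axioms \eqref{postmalcevalgebras1}--\eqref{postmalcevalgebras4} is needed, since all that work has been absorbed into the proof of Proposition \ref{postmalcev==>malcev}.
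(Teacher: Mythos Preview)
Your proof is correct and is exactly the argument the paper intends: the corollary is stated without proof precisely because it follows immediately from Proposition \ref{postmalcev==>malcev}, via the one-line computation $x\circ y - y\circ x = \{x,y\}$ that you carried out. There is nothing to add.
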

\begin{prop}
Let $(A,[\c,\c],\rhd)$ be a post-Malcev algebra. Define
$L_\rhd:A\to A$ by $L_{\rhd}(x)y=x\rhd y$ for any $x,y\in A$.
Then
$(A^{C}; [\c,\c], L_\rhd)$ is an $A$-module Malcev algebra of  $(A^{C},\{\c,\c\})$.
\end{prop}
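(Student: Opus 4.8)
The plan is to check, for the data consisting of the base Malcev algebra $(A^{C},\{\c,\c\})$, the module Malcev algebra $(A,[\c,\c])$, and the action $\r=L_\rhd$ (so $\r(x)a=x\rhd a$), everything required in the definition of an $A$-module Malcev algebra: that $(A^{C},\{\c,\c\})$ and $(A,[\c,\c])$ are Malcev algebras, that $(A,L_\rhd)$ is a representation of $(A^{C},\{\c,\c\})$ in the sense of \eqref{representation}, and that the three compatibility identities \eqref{repKalgebras1}--\eqref{repKalgebras3} hold, where throughout $\r(\cdot)$ is read as $L_\rhd(\cdot)$, the bracket occurring inside $\r(\cdot)$ as the sub-adjacent bracket $\{\c,\c\}$, and the module bracket $[\c,\c]_V$ as the original bracket $[\c,\c]$. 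The first requirement is immediate: $(A^{C},\{\c,\c\})$ is a Malcev algebra by Proposition \ref{postmalcev==>malcev}, and $(A,[\c,\c])$ is one by the very definition of a post-Malcev algebra.

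For the compatibility identities the key remark is that, once $\r=L_\rhd$ is substituted, each of \eqref{repKalgebras1}--\eqref{repKalgebras3} is one of the post-Malcev axioms after a relabelling of variables. For example \eqref{repKalgebras1} becomes
\[\{x,y\}\rhd[a,b]=x\rhd[y\rhd a,b]-[y\rhd(x\rhd a),b]-[x\rhd(y\rhd b),a]+y\rhd[x\rhd b,a],\]
which is exactly \eqref{postmalcevalgebras1} under the substitution $(z,y,t)\mapsto(y,a,b)$; likewise \eqref{repKalgebras2} is \eqref{postmalcevalgebras2} under $(z,t)\mapsto(a,b)$, and \eqref{repKalgebras3} is \eqref{postmalcevalgebras3} under $(z,y,t)\mapsto(a,b,c)$. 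Hence these three clauses hold with no further computation.

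The substantive step is the representation identity. Evaluating \eqref{representation} for the base algebra $(A^{C},\{\c,\c\})$ on a vector $t\in A$ gives
\[\{\{x,y\},z\}\rhd t=x\rhd(y\rhd(z\rhd t))-z\rhd(x\rhd(y\rhd t))+y\rhd(\{z,x\}\rhd t)-\{y,z\}\rhd(x\rhd t),\]
and, after rearranging and applying the skew-symmetry $\{z,x\}=-\{x,z\}$ of the sub-adjacent bracket, this is precisely the last post-Malcev axiom \eqref{postmalcevalgebras4} (the Malcev identity of $A^{C}$ from Proposition \ref{postmalcev==>malcev} being available should one wish to match the two forms symmetrically). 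So $(A,L_\rhd)$ is a representation of $A^{C}$, and combining this with the compatibility clauses above shows that $(A^{C};[\c,\c],L_\rhd)$ is an $A$-module Malcev algebra of $(A^{C},\{\c,\c\})$. I expect the only real obstacle to be exactly this last identification---recognising \eqref{postmalcevalgebras4} as the Malcev-representation relation for $\{\c,\c\}$ with $\r=L_\rhd$---since the remaining clauses are, as noted, the defining axioms of a post-Malcev algebra in disguise.
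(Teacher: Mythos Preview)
Your proposal is correct and follows essentially the same approach as the paper: the paper, like you, verifies the representation condition \eqref{representation} for $(A^{C},\{\c,\c\})$ directly from axiom \eqref{postmalcevalgebras4}, and then matches the compatibility conditions \eqref{repKalgebras1}, \eqref{repKalgebras2}, \eqref{repKalgebras3} term-by-term with the post-Malcev axioms \eqref{postmalcevalgebras1}, \eqref{postmalcevalgebras2}, \eqref{postmalcevalgebras3} respectively. The only difference is cosmetic ordering---the paper checks the representation identity first and then the three compatibilities, while you do the reverse---and the paper writes out the substitutions explicitly in each case rather than merely naming the relabelling.
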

\begin{proof}
By \eqref{postmalcevalgebras4}, $L_\rhd$ is a representation of $(A^{C},\{\c,\c\})$. Indeed, for $x,y,z, t \in A$,
\begin{align*}
L_\rhd(\{y,z\})L_\rhd(x)&t=\{y,z\}\rhd(x\rhd t)= \{\{x,y\},z\}\rhd t+y\rhd(\{x,z\}\rhd t)+x\rhd(y\rhd( z\rhd t))-z\rhd(x\rhd( y\rhd t))\\
&=L_\rhd (\{\{x,y\},z\}) t+L_\rhd (y)L_\rhd(\{x,z\}) t+L_\rhd (x)L_\rhd(y)L_\rhd(z)t-L_\rhd(z)L_\rhd(x)L_\rhd( y)t.
\end{align*}
To prove \eqref{repKalgebras1}, according to \eqref{postmalcevalgebras1} we compute
\begin{align*}
L_\rhd(\{x, z\})[y, t]& = \{x, z\}\rhd [y, t]= x\rhd[z\rhd y, t]- [z\rhd(x\rhd y), t] - [x\rhd(z\rhd t), y] + z\rhd [x\rhd t, y]\\
&=L_\rhd(x)[L_\rhd(z)y, t]- [L_\rhd(z)L_\rhd(x)y, t] - [L_\rhd(x)L_\rhd(z)t, y]+ L_\rhd(z)[L_\rhd(x)t, y].
 \end{align*}
 Similarly, by \eqref{postmalcevalgebras2} and \eqref{postmalcevalgebras3}, we have
 \begin{align*}
     [L_\rhd(x) z, L_\rhd(y)t]&=[x\rhd z, y\rhd t]= [\{x, y\}\rhd z, t] -  x\rhd[y\rhd z, t] +  y\rhd(x\rhd[z, t]) + [y\rhd(x\rhd t), z]\\
     &=[L_\rhd(\{x, y\}) z, t] -  L_\rhd(x)[L_\rhd(y)z, t] +  L_\rhd(y)L_\rhd(x)[z, t] + [L_\rhd(y)L_\rhd(x) t, z],
    \\[0,2cm]
     [L_\rhd(x) z, [y, t]]& =[x\rhd z, [y, t]] =[[x\rhd y, z], t] - x\rhd [[y, z], t] - [x\rhd[z, t], y] - [[x\rhd t, y], z]\\
     &=[[L_\rhd(x)y, z], t] - L_\rhd(x)[[y, z], t] - [L_\rhd(x)[z, t], y] - [[L_\rhd(x) t, y], z].
 \end{align*}
Therefore $(A^{C}; [\c,\c], L_\rhd)$ is an $A$-module Malcev algebra of  $(A^{C},\{\c,\c\})$.
\end{proof}
\begin{prop}
If $(A,[\c,\c],\rhd)$ be a post-Malcev algebra, then  $(A,-[\c,\c],\blacktriangleright)$
 is  also a post-Malcev algebra, where for all $x,y\in A$,
  \begin{equation}
x\blacktriangleright y=x \rhd y+ [x,y].
\end{equation}
Moreover, $(A,[\c,\c],\rhd)$  and  $(A,-[\c,\c],\blacktriangleright)$ have the same sub-adjacent Malcev algebra $A^C$.
\end{prop}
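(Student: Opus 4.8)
The plan is to verify directly that the triple $(A,-[\c,\c],\blacktriangleright)$ satisfies the four post-Malcev axioms \eqref{postmalcevalgebras1}--\eqref{postmalcevalgebras4}, using the fact that $(A,[\c,\c],\rhd)$ does. The first observation to record is that the ``sub-adjacent'' bracket is preserved: if $\{x,y\}=x\rhd y-y\rhd x+[x,y]$ denotes the bracket built from $(\rhd,[\c,\c])$, and we write $\{x,y\}'$ for the bracket built from $(\blacktriangleright,-[\c,\c])$, then
\[
\{x,y\}' = x\blacktriangleright y - y\blacktriangleright x - [x,y] = (x\rhd y+[x,y]) - (y\rhd x+[y,x]) - [x,y] = x\rhd y - y\rhd x + [x,y] = \{x,y\},
\]
using $[y,x]=-[x,y]$. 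This immediately gives the last sentence of the statement, and it also tells us that wherever $\{\c,\c\}$ appears inside the new axioms, it is literally the same bilinear map as before, which is what makes the bookkeeping tractable.

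Next I would substitute $\blacktriangleright$ and the bracket $-[\c,\c]$ into each of \eqref{postmalcevalgebras1}--\eqref{postmalcevalgebras4} and expand $x\blacktriangleright y = x\rhd y+[x,y]$ everywhere, collecting terms by ``type'': terms that are entirely in $\rhd$, terms linear in the bracket, terms quadratic in the bracket, and so on. The claim is that for each axiom the collection of pure-$\rhd$ terms plus bracket-terms reorganizes, via the original axiom \eqref{postmalcevalgebras1}--\eqref{postmalcevalgebras4} for the one appearing in the same slot, the anti-symmetry and the Malcev/Sagle identity for $[\c,\c]$, and the compatibility identities \eqref{repKalgebras1}--\eqref{repKalgebras3} read off from the fact (proved in the previous proposition) that $L_\rhd$ is an $A$-module Malcev structure on $A^C$. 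In practice the cleanest way to organize this is to note that $(A,-[\c,\c])$ is again a Malcev algebra (anti-symmetry is clear and the Malcev identity is unchanged under $[\c,\c]\mapsto -[\c,\c]$ since every monomial in Sagle's identity has bracket-degree $3$, hence picks up the sign $(-1)^3=-1$ on both sides), so only the ``mixed'' axioms \eqref{postmalcevalgebras1}--\eqref{postmalcevalgebras4} require genuine checking.

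The main obstacle is purely the length of the expansion: axiom \eqref{postmalcevalgebras4}, for instance, becomes an identity with nine or more monomials after expanding each $\blacktriangleright$ in terms of $\rhd$ and $[\c,\c]$, and one must see the cancellations among the bracket-heavy terms. I expect the decisive grouping to be: the degree-$0$ (pure $\rhd$) part is exactly the corresponding original axiom; the terms linear in $[\c,\c]$ assemble into the relevant compatibility identity among \eqref{repKalgebras1}--\eqref{repKalgebras3} (equivalently \eqref{postmalcevalgebras1}--\eqref{postmalcevalgebras3} with one $\rhd$ replaced by a bracket), after using $\{x,y\}'=\{x,y\}$; and the terms of bracket-degree $\ge 2$ close up using the Jacobian/Sagle identity for $[\c,\c]$ together with the sign change. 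I would carry this out one axiom at a time in the order \eqref{postmalcevalgebras4}, then \eqref{postmalcevalgebras3}, \eqref{postmalcevalgebras2}, \eqref{postmalcevalgebras1}, since \eqref{postmalcevalgebras4} is symmetric enough to fix the pattern of cancellations that the other three reuse. Finally, as already noted, the equality $\{\c,\c\}'=\{\c,\c\}$ gives that both post-Malcev algebras have the same sub-adjacent Malcev algebra $A^C$, completing the proof.
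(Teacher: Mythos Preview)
Your proposal is correct and follows essentially the same route as the paper: direct substitution of $\blacktriangleright=\rhd+[\c,\c]$ and the bracket $-[\c,\c]$ into the four axioms, followed by expansion and verification. The paper in fact does less than you outline---it writes out the expansion for axiom \eqref{postmalcevalgebras1} only, asserts the result is $0$ without grouping or explanation, and says the others are similar---so your organization by bracket-degree and your preliminary observation $\{x,y\}'=\{x,y\}$ (which the paper leaves implicit) are an improvement in clarity, not a different method.
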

\begin{proof}
We check only that $(A,-[\cdot,\cdot],\blacktriangleright)$ verifies the first post-Malcev identity. The other identities can be verified similarly.
  In fact, for all $x,y,z, t\in A$,
\begin{align*}
&\{x, z\}\blacktriangleright [y, t] - x\blacktriangleright[z\blacktriangleright y, t]+ [z\blacktriangleright(x\blacktriangleright y), t] + [x\blacktriangleright(z\blacktriangleright t), y] - z\blacktriangleright [x\blacktriangleright t, y]\\
&=\{x, z\}\rhd [y, t] + [\{x, z\}, [y, t]] - x\rhd[z\rhd y, t]- x\rhd [[z, y], t] - [x, [z\rhd y, t]]\\
&\quad - [x, [[z, y], t]]-[z\rhd(x\rhd y), t] - [z\rhd [x, y], t]+ [[z, x\rhd y], t]+ [[z, [x, y]], t]\\
&\quad+  [x\rhd(z\rhd t), y] +[x\rhd[z, t], y] + [[x, z\rhd t], y] + [[x, [z, t]], y]-  z\rhd [x\rhd t, y]\\
&\quad - z\rhd[[x, t], y]- [z, [x\rhd t, y]] - [z, [[x, t], y]] =0.
\qedhere
\end{align*}

\end{proof}
\begin{thm}
If $(A,[\c,\c],\rhd)$ is a post-Malcev algebra, then
$(A\times A,\llbracket \c,\c \rrbracket)$ is  a Malcev algebra, with the double bracket product $\llbracket \c,\c \rrbracket$ on $A\times A$ defined for all $a,b,x,y\in A$ by
  \begin{equation}
  \label{DoubleBarcket}
    \llbracket (a,x),(b,y) \rrbracket =(a\rhd b-b\rhd a+[a,b], \quad
    a\rhd y-b\rhd x +[x,y]).
  \end{equation}
\end{thm}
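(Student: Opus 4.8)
The plan is to realize the double bracket $\llbracket\cdot,\cdot\rrbracket$ on $A\times A$ as the semi-direct product bracket associated to a suitable $A^C$-module Malcev algebra structure, so that the Malcev identity follows from a result already established in the excerpt rather than from a direct verification. Concretely, recall from Proposition~\ref{postmalcev==>malcev} that $(A,\{\cdot,\cdot\})$ with $\{x,y\}=x\rhd y-y\rhd x+[x,y]$ is a Malcev algebra, namely $A^C$, and from the proposition preceding this theorem that $(A^C;[\cdot,\cdot],L_\rhd)$ is an $A$-module Malcev algebra of $(A^C,\{\cdot,\cdot\})$, where $L_\rhd(x)y=x\rhd y$. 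First I would observe that the first component of \eqref{DoubleBarcket} is exactly $\{a,b\}$, and the second component is $L_\rhd(a)y-L_\rhd(b)x+[x,y]$. Comparing with the semi-direct product bracket \eqref{semidirectproductMalcev}, this is precisely the bracket of $A^C\ltimes_{L_\rhd} V$ where $V=(A,[\cdot,\cdot])$ is the underlying Malcev algebra of $A$ viewed as the module, and $\rho=L_\rhd$. Hence $(A\times A,\llbracket\cdot,\cdot\rrbracket)=A^C\ltimes_{L_\rhd}A$, and the Proposition on semi-direct products immediately yields that this is a Malcev algebra.

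The key steps in order are therefore: (1) identify the Malcev algebra on the first factor as $A^C$ via Proposition~\ref{postmalcev==>malcev}; (2) identify $(A,[\cdot,\cdot],L_\rhd)$ as an $A$-module Malcev algebra over $A^C$, which is exactly the content of the preceding proposition — here one must be careful that the ambient Malcev algebra in that proposition is $(A^C,\{\cdot,\cdot\})$ and that its compatibility conditions \eqref{repKalgebras1}--\eqref{repKalgebras3} are phrased with the bracket $[\cdot,\cdot]_V=[\cdot,\cdot]$ on the module; (3) match the two components of \eqref{DoubleBarcket} term by term with \eqref{semidirectproductMalcev}, reading $x,y$ (the module elements) as the second coordinates and $a,b$ (the algebra elements) as the first coordinates; (4) invoke the semi-direct product Proposition to conclude.

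The main obstacle is purely bookkeeping: making sure the roles of the two copies of $A$ are not interchanged — the first factor carries $\{\cdot,\cdot\}$ and acts via $L_\rhd$, the second factor carries $[\cdot,\cdot]$ and is acted upon — and that the notation $\rho(x)b-\rho(y)a$ in \eqref{semidirectproductMalcev} lines up with $a\rhd y-b\rhd x$ after substituting $\rho=L_\rhd$, $x\leftarrow a$, $y\leftarrow b$, and the module elements appropriately. Once the dictionary $A\times A \;\cong\; A^C\ltimes_{L_\rhd}A$ is written down explicitly, nothing further needs to be computed. If one instead wanted a self-contained proof, one would expand $\llbracket\llbracket(a,x),(c,z)\rrbracket,\llbracket(b,y),(d,t)\rrbracket\rrbracket$ and the four cyclic terms of Sagle's identity; the first-component identity is then Sagle's identity for $A^C$ (already proved), while the second-component identity reduces, after grouping, exactly to the module axioms \eqref{repKalgebras1}--\eqref{repKalgebras3} for $(A^C;[\cdot,\cdot],L_\rhd)$ together with the representation property — but the semi-direct product route makes all of this unnecessary.
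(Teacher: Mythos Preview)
Your proposal is correct and takes a genuinely different route from the paper's own proof. The paper proceeds by a direct, brute-force verification of Sagle's identity for $\llbracket\cdot,\cdot\rrbracket$: it expands each of $\llbracket\llbracket(a,x),(c,z)\rrbracket,\llbracket(b,y),(d,t)\rrbracket\rrbracket$ and the four iterated triple brackets component by component, and then invokes Proposition~\ref{postmalcev==>malcev} (the Malcev identity for $\{\cdot,\cdot\}$) together with the axioms \eqref{postmalcevalgebras1}--\eqref{postmalcevalgebras4} to see that the sum vanishes.

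Your approach is more conceptual and strictly shorter: you recognize that the first component of \eqref{DoubleBarcket} is $\{a,b\}$ and the second is $L_\rhd(a)y-L_\rhd(b)x+[x,y]$, so that $(A\times A,\llbracket\cdot,\cdot\rrbracket)$ is literally the semi-direct product $A^{C}\ltimes_{L_\rhd}(A,[\cdot,\cdot])$ of \eqref{semidirectproductMalcev}. Since the proposition immediately preceding this theorem already shows that $(A^{C};[\cdot,\cdot],L_\rhd)$ is an $A$-module Malcev algebra over $(A^{C},\{\cdot,\cdot\})$, the semi-direct product proposition applies verbatim and the Malcev identity follows without any new computation. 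The advantage of your route is that it explains \emph{why} the theorem holds (it is a repackaging of the module-algebra structure already established), whereas the paper's expansion re-derives, in disguised form, both Sagle's identity for $A^{C}$ and the module compatibilities \eqref{repKalgebras1}--\eqref{repKalgebras3}. Your bookkeeping caveat about matching the roles of the two copies of $A$ is the only point requiring care, and you have handled it correctly.
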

\begin{proof}
Let $x,y,z, t, a,b,c, d\in A$. It is obvious that $\llbracket (a,x),(b, y) \rrbracket=-\llbracket (b, y),(a, x) \rrbracket.$
 On the other hand,
 \begin{align*}
 &\llbracket\llbracket (a, x), (c, z)\rrbracket, \llbracket(b, y), (d, t)\rrbracket\rrbracket= \\
 & \{\{a, c\}, \{b, d\}\},~~(a\rhd c)\rhd(b\rhd t)- (a\rhd c)\rhd(d\rhd y) -(c\rhd a)\rhd(b\rhd t)\\
 & + (c\rhd a)\rhd(d\rhd y) + [a, c]\rhd(b \rhd t) - [a, c]\rhd(d \rhd y) + \{a, c\}\rhd[y, t]\\
 & -(b\rhd d)\rhd(a\rhd z)+ (b\rhd d)\rhd(c\rhd x) + (d\rhd b)\rhd(a\rhd z) - (d\rhd b)\rhd(c\rhd x)\\
 & - [b, d]\rhd(a \rhd z) + [b, d]\rhd(c \rhd x) - \{b, d\}\rhd[x, z] + [a\rhd z, b\rhd t]\\
 &- [a\rhd z, d\rhd y] -[c\rhd x, b\rhd t] + [c\rhd x, d\rhd y]- [a\rhd z, [y, t]] +[c\rhd x, [y, t]]\\
 &+ [[x, z], b\rhd t]  -[[x, z], d\rhd y] -[[x, z], [y, t]],
 \\[0,1cm]
 &\llbracket\llbracket \llbracket(a, x), (b, y)\rrbracket, (c, z)\rrbracket, (d, t)\rrbracket=\\
 &\{\{\{a, b\}, c\}, d\},~~((a\rhd b)\rhd c)\rhd t -((b \rhd a)\rhd c)\rhd t- (c\rhd(a\rhd b))\rhd t\\
 &+ (c\rhd (b\rhd a)) \rhd t + [a\rhd b, c]\rhd t- [b\rhd a, c]\rhd t+ \{[a, b], c\}\rhd t\\
 &- d\rhd((a\rhd b)\rhd z)+ d\rhd ((b\rhd a)\rhd z) - d\rhd([a, b]\rhd z)+ d\rhd(c\rhd(a\rhd y))\\
 & - d\rhd(c\rhd(b\rhd x))+ d\rhd(c\rhd[x, y])-d\rhd[a\rhd y, z]+ d\rhd [b\rhd x, z]- d\rhd [[x, y], z]\\
 &+ [\{a, b\}\rhd z, t]- [c\rhd(a\rhd y), t]+ [c\rhd(b\rhd x), t]- [c\rhd [x, y], t]\\
 &+ [[a\rhd y, z], t]- [[b\rhd x, z], t]+ [[[x, y], z], t],
\\[0,1cm]
 &\llbracket\llbracket \llbracket(b, y), (c, z)\rrbracket, (d, t)\rrbracket, (a, x)\rrbracket=\\
 &\{\{\{b, c\}, d\}, a\},~~((b\rhd c)\rhd d)\rhd x -((c \rhd b)\rhd d)\rhd x- (d\rhd(b\rhd c))\rhd x\\
 &+ (d\rhd (c\rhd b)) \rhd x + [b\rhd c, d]\rhd x- [c\rhd b, d]\rhd x+ \{[b, c], d\}\rhd x\\
 &- a\rhd((b\rhd c)\rhd t)+ a\rhd ((c\rhd b)\rhd t) - a\rhd([b, c]\rhd t)+ a\rhd(d\rhd(b\rhd z))\\
 & - a\rhd(d\rhd(c\rhd y))+ a\rhd(d\rhd[y, z])-a\rhd[b\rhd z, t]+ a\rhd [c\rhd y, t]- a\rhd [[y, z], t]\\
 &+ [\{b, c\}\rhd t, x]- [d\rhd(b\rhd z), x]+ [d\rhd(c\rhd y), x]- [d\rhd [y, z], x]\\
 &+ [[b\rhd z, t], x]- [[c\rhd y, t], x]+ [[[y, z], t], x],
\\[0,1cm]
 &\llbracket\llbracket \llbracket(c, z), (d, t)\rrbracket, (a, x)\rrbracket, (b, y)\rrbracket=\\
 &\{\{\{c, d\}, a\}, b\},~~((c\rhd d)\rhd a)\rhd y -((d \rhd c)\rhd a)\rhd y- (a\rhd(c\rhd d))\rhd y\\
 &+ (a\rhd (d\rhd c)) \rhd y + [c\rhd d, a]\rhd y- [d\rhd c, a]\rhd y+ \{[c, d], a\}\rhd y\\
 &- b\rhd((c\rhd d)\rhd x)+ b\rhd ((d\rhd c)\rhd x) - b\rhd([c, d]\rhd x)+ b\rhd(a\rhd(c\rhd t))\\
 & - b\rhd(a\rhd(d\rhd z))+ b\rhd(a\rhd[z, t])- b\rhd[c\rhd t, x]+ b\rhd [d\rhd z, x]- b\rhd [[z, t], x]\\
 &+ [\{c, d\}\rhd x, y]- [a\rhd(c\rhd t), y]+ [a\rhd(d\rhd z), y]- [a\rhd [z, t], y]\\
 &+ [[c\rhd t, x], y]- [[d\rhd z, x], y]+ [[[z, t], x], y],
\\[0,1cm]
 &\llbracket\llbracket \llbracket(d, t), (a, x)\rrbracket, (b, y)\rrbracket, (c, z)\rrbracket=\\
 &\{\{\{d, a\}, b\}, c\},~~((d\rhd a)\rhd b)\rhd z -((a \rhd d)\rhd b)\rhd z- (b\rhd(d\rhd a))\rhd z\\
 &+ (b\rhd (a\rhd d)) \rhd z + [d\rhd a, b]\rhd z- [a\rhd d, b]\rhd z+ \{[d, a], b\}\rhd z\\
 &- c\rhd((d\rhd a)\rhd y)+ c\rhd ((a\rhd d)\rhd y) - c\rhd([d, a]\rhd y)+ c\rhd(b\rhd(d\rhd x))\\
 & - c\rhd(b\rhd(a\rhd t))+ c\rhd(b\rhd[t, x])-c\rhd[d\rhd x, y]+ c\rhd [a\rhd t, y]- c\rhd [[t, x], y]\\
 &+ [\{d, a\}\rhd y, z]- [y\rhd(d\rhd x), z]+ [y\rhd(a\rhd t), z]- [y\rhd [t, x], z]\\
 &+ [[d\rhd x, y], z]- [[a\rhd t, y], z]+ [[[t, x], y], z].
 \end{align*}
 Hence, using \eqref{malcevbracket} of Proposition \ref{postmalcev==>malcev} and  Definition \ref{def:postmalcev}, we have
  \begin{align*}
&\llbracket\llbracket (a, x), (c, z)\rrbracket, \llbracket(b, y), (d, t)\rrbracket\rrbracket- \llbracket\llbracket \llbracket(a, x), (b, y)\rrbracket, (c, z)\rrbracket, (d, t)\rrbracket\\
& -\llbracket\llbracket \llbracket(b, y), (c, z)\rrbracket, (d, t)\rrbracket, (a, x)\rrbracket -\llbracket\llbracket \llbracket(c, z), (d, t)\rrbracket, (a, x)\rrbracket, (b, y)\rrbracket\\
 &- \llbracket\llbracket \llbracket(d, t), (a, x)\rrbracket, (b, y)\rrbracket, (c, z)\rrbracket = (0, 0).
 \qedhere
 \end{align*}
 \end{proof}
 The following results illustrate that a $\lambda$-weighted $\mathcal{O}$-operator induces a post-Malcev algebra structure.
 \begin{thm}
  Let $(A, [\c,\c]_A)$ be  a Malcev algebra and
  $(V; [\c,\c]_V, \r)$  an $A$-module Malcev algebra.
Let  $T: V\to A$  be a  $\lambda$-weighted $\mathcal{O}$-operator associated to
$(V; [\c,\c]_V, \r)$.
\begin{enumerate}[label*={\rm (\roman*)}]
\item
\label{it:depostmal1}
Define two new bilinear operations $[\c,\c],\ \rhd: V\times V\to V$ as follows, for all $a, b\in V$,
\begin{eqnarray}\label{malcev==>Postmalcev}
[a,b]=\lambda[a,b]_V,\quad a\vartriangleright b=\r(T(a))b.
\end{eqnarray}
Then $(V, [\c,\c], \vartriangleright)$ is a post-Malcev algebra.
\item \label{it:depostmal2}
$T$ is a Malcev algebra homomorphism from the sub-adjacent Malcev algebra  $(V, \{\c,\c\})$ given in Proposition \ref{postmalcev==>malcev} to $(A, [\c,\c]_A)$.
\end{enumerate}
\label{thm:depostmal}
\end{thm}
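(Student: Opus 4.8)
The plan is to dispatch part \ref{it:depostmal2} first, as it is essentially immediate and is used in part \ref{it:depostmal1}. By \eqref{malcevbracket}, the sub-adjacent bracket of the triple $(V,[\c,\c],\vartriangleright)$ defined through \eqref{malcev==>Postmalcev} is
\[
\{a,b\}=a\vartriangleright b-b\vartriangleright a+[a,b]=\rho(T(a))b-\rho(T(b))a+\lambda[a,b]_V,
\]
which is exactly the induced bracket $[\c,\c]_T$ of Corollary \ref{cor:induced-malc}; hence $(V,\{\c,\c\})$ is a Malcev algebra, and applying $T$ and invoking the defining identity \eqref{O-operatormalcev} of a $\lambda$-weighted $\mathcal{O}$-operator gives $T(\{a,b\})=[T(a),T(b)]_A$. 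Thus $T$ is a homomorphism of Malcev algebras, and in particular $T(\{\{a,b\},c\})=[[T(a),T(b)]_A,T(c)]_A$.

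For part \ref{it:depostmal1}, the approach is to substitute the operations \eqref{malcev==>Postmalcev} into each of the post-Malcev axioms \eqref{postmalcevalgebras1}--\eqref{postmalcevalgebras4} and recognize the outcome as a structural identity already available. A useful preliminary is a $\lambda$-count: since $a\vartriangleright b$ carries no $\lambda$ whereas $[a,b]=\lambda[a,b]_V$ carries one, every term of \eqref{postmalcevalgebras1} and of \eqref{postmalcevalgebras2} carries the factor $\lambda$, every term of \eqref{postmalcevalgebras3} (which has a double bracket) carries $\lambda^{2}$, and \eqref{postmalcevalgebras4} carries $\lambda^{0}$, because the brackets occurring there are sub-adjacent brackets, which $T$ turns into $[\c,\c]_A$ with no surviving $\lambda$. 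After cancelling this common scalar and rewriting each $\{x,z\}\vartriangleright(-)$ as $\rho([T(x),T(z)]_A)(-)$ using part \ref{it:depostmal2}, the identities \eqref{postmalcevalgebras1}, \eqref{postmalcevalgebras2}, \eqref{postmalcevalgebras3} become, respectively, \eqref{repKalgebras1}, \eqref{repKalgebras2}, \eqref{repKalgebras3} evaluated at the elements $T(x),T(y),T(z)\in A$, with the $V$-arguments in the remaining slots ($y,t$ for \eqref{postmalcevalgebras1}; $z,t$ for \eqref{postmalcevalgebras2}; $z,y,t$ for \eqref{postmalcevalgebras3}); these hold because $(V;[\c,\c]_V,\rho)$ is an $A$-module Malcev algebra.

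The remaining axiom \eqref{postmalcevalgebras4} is where the representation property \eqref{representation} of $\rho$ enters. Replacing $a\vartriangleright b$ by $\rho(T(a))b$ throughout, and $\{b,c\}\vartriangleright v$ by $\rho([T(b),T(c)]_A)v$ and $\{\{x,y\},z\}\vartriangleright t$ by $\rho([[T(x),T(y)]_A,T(z)]_A)t$ via part \ref{it:depostmal2}, equation \eqref{postmalcevalgebras4} becomes an operator identity on $V$ among $\rho(T(x)),\rho(T(y)),\rho(T(z))$ and $\rho$ applied to brackets of these; this is precisely \eqref{representation} for $\rho$ evaluated at $T(x),T(y),T(z)$, after rearranging and using the anti-symmetry of $[\c,\c]_A$. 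Once \eqref{postmalcevalgebras1}--\eqref{postmalcevalgebras4} are established, $(V,[\c,\c],\vartriangleright)$ is a post-Malcev algebra and, by Proposition \ref{postmalcev==>malcev}, its sub-adjacent Malcev algebra coincides with the one exhibited in part \ref{it:depostmal2}, which completes the proof.

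The step I expect to be the main obstacle is the bookkeeping in \eqref{postmalcevalgebras4}: unlike \eqref{postmalcevalgebras1}--\eqref{postmalcevalgebras3}, it contains the iterated sub-adjacent bracket $\{\{x,y\},z\}$, which must first be linearized through \eqref{O-operatormalcev} before \eqref{representation} is recognizable, and one must correctly align the three cyclically related arguments $T(x),T(y),T(z)$ together with the signs coming from the anti-symmetry of $[\c,\c]_A$. The verifications of \eqref{postmalcevalgebras1}--\eqref{postmalcevalgebras3} are routine once the matching substitutions and the powers of $\lambda$ have been pinned down as above.
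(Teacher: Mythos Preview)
Your proposal is correct and follows essentially the same approach as the paper: substitute the operations \eqref{malcev==>Postmalcev} into the post-Malcev axioms and reduce \eqref{postmalcevalgebras1}--\eqref{postmalcevalgebras3} to the $A$-module Malcev identities \eqref{repKalgebras1}--\eqref{repKalgebras3} (with the overall $\lambda$, resp.\ $\lambda^{2}$, factored out), and reduce \eqref{postmalcevalgebras4} to the representation identity \eqref{representation}; part \ref{it:depostmal2} is then identified with Corollary \ref{cor:induced-malc}. The only organizational difference is that you establish part \ref{it:depostmal2} first and feed the homomorphism property $T(\{a,b\})=[T(a),T(b)]_A$ directly into the verification of \eqref{postmalcevalgebras4}, whereas the paper first expands all sub-adjacent brackets occurring in \eqref{postmalcevalgebras4} term by term and then applies \eqref{O-operatormalcev} to collapse the resulting expression; your route is slightly more economical but not substantively different.
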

\begin{proof}
\ref{it:depostmal1}  We use \eqref{repKalgebras1}-\eqref{repKalgebras3} of representation of Malcev algebras on $\mathbb{K}$-algebra.
\begin{align*}
&\{a, c\}\rhd [b, d] - a\rhd[c\rhd b, d]+ [c\rhd(a\rhd b), d] + [a\rhd(c\rhd d), b] - c\rhd [a\rhd d, b]\\
& \quad =\lambda\Big(\r([T(a), T(b)]_A)[b, d]_V - \r(T(a))[\r(T(c))b, d]_V + [\r(T(c))\r(T(a))b, d]_V\\
&\quad \quad + [\r(T(a))\r(T(c))d, b]_V- \r(T(c))[\r(T(a))d, b]_V\Big)=0,
\\[0,1cm]
&[a\rhd c, b\rhd d]-  [\{a, b\}\rhd c, d] +  a\rhd[b\rhd c, d] -  b\rhd(a\rhd[c, d]) - [b\rhd(a\rhd d), c]\\
& \quad =\lambda \Big([\r(T(a))c, \r(T(b))d]_V - [\r([T(a), T(b)]_A )c, d]_V+ \r(T(a))[\r(T(b))c, d]_V\\
&\quad \quad - \r(T(b))\r(T(a))[c, d]_V- [\r(T(b))\r(T(a))d, c]_V\Big)=0,
\\[0.2cm]
&[a\rhd c, [b, d]] - [[a\rhd b, c], d] - a\rhd [[b, c], d] + [a\rhd[c, d], b] +[[a\rhd d, b], c]\\
&\quad =\lambda^{2}\Big([\r(T(a))c, [b, d]_V]_V- [[\r(T(a))b, c]_V, d]_V- \r(T(a))[[b, c]_V, d]_V \\
&\quad \quad + [\r(T(a))[c, d]_V, b]_V - [[\r(T(a))d, b]_V, c]_V\Big)= 0.
\end{align*}
Using the condition \eqref{representation} of Definition \ref{df:repmalcevalgebra}, we check
\begin{align*}
&(b \rhd c) \rhd (a \rhd d) - (c \rhd b) \rhd (a \rhd d) + ((a \rhd b) \rhd c) \rhd d - ((b \rhd a) \rhd c) \rhd d\\
 &\quad- (c \rhd (a \rhd b)) \rhd d + (c \rhd (b \rhd a)) \rhd d+ b \rhd ((a \rhd c) \rhd d) - b \rhd ((c \rhd a) \rhd d)\\
  &\quad- a \rhd (b \rhd (c \rhd d)) + c \rhd (a \rhd (b \rhd d)) -[a, c]\rhd(b\rhd d)+ [a\rhd b, c]\rhd d\\
  &\quad- [b\rhd a, c]\rhd d+ \{[a, b], c\}\rhd d+ a\rhd([b, c]\rhd d)\\
&= \rho(T(\rho(T(b))c))\rho(T(a))d- \rho(T(\rho(T(c))b))\rho(T(a))d+ \rho(T(\rho(T(\rho(T(a))b))c))d\\
&\quad -  \rho(T(\rho(T(\rho(T(b))a))c))d - \rho(T(\rho(T(c)))\rho(T(a))b)d + \rho(T(\rho(T(c)))\rho(T(b))a)d\\
&\quad + \rho(T(b))\rho(T(\rho(T(a))c))d-  \rho(T(b))\rho(T(\rho(T(c))a))d- \rho(T(a))\rho(T(b))\rho(T(c))d\\
&\quad+ \rho(T(\rho(T(c))b))\rho(T(a))d-\r(T(\lambda[a, c]_V))\r(T(b))d+ \r(T(\lambda[\r(T(a))b, c]_V))d\\
&\quad - \r(T(\lambda[\r(T(b))a, c]_V))d+ \r(T(\{\lambda[a, b]_V, c\}))d + \r(T(a))\r(T(\lambda[b, c]_V))d \\
 &=  \rho([T(b), T(c)]_A)\rho(T(a))d +  \rho([T(\r(T(a))b), T(c)]_A)d- \rho([T(\r(T(b))a), T(c)]_A)d\\
 &\quad- \rho(T(a))\rho(T(b))\rho(T(c))d- \rho(T(b))\rho([T(a), T(c)]_A)d + \rho(T(c))\rho(T(a))\rho(T(b))d\\
  & =\rho([T(b), T(c)]_A)\rho(T(a))d + \rho([[T(a), T(b)]_A, T(c)]_A)d - \rho(T(a))\rho(T(b))\rho(T(c))d\\
  &\quad - \rho(T(b))\rho([T(a), T(c)]_A)d + \rho(T(c))\rho(T(a))\rho(T(b))d
  =0.
\end{align*}
\ref{it:depostmal2} The Malcev bracket $\{\c,\c\}$  is defined for all $a,b\in V$ by
$$\{a,b\}=a\rhd b-b\rhd a+[a,b]=\r(T(a))b-\r(T(b))a+\lambda [a,b]_{V}.$$
Then  the  sub-adjacent Malcev algebra of the above post-Malcev algebra $(V, [\c,\c], \vartriangleright)$
is exactly the Malcev algebra $(V, [\c,\c]_T )$ given in Corollary \ref{cor:induced-malc} Then the result follows.
\end{proof}
\begin{prop}\label{pro:mor-o-op-post-Malcev}
Let $T,T':(V,[\cdot,\cdot]_V)\to (A,[\cdot,\cdot])$ be two $\lambda$-weighted $\mathcal{O}$-operators with respect to an $A$-module Malcev algebra $(V; [\cdot, \cdot]_V, \r)$. Let $(V, \{\c,\c\}, \vartriangleright)$ and $(V, \{\c,\c\}', \vartriangleright')$ be the  post-Malcev algebras given in Theorem \ref{thm:depostmal} and $(\phi, \psi)$ be a homomorphism from $T'$ to $T$. Then $\psi$ is a homomorphism from the post-Malcev algebra $(V, \{\c,\c\}, \vartriangleright)$ to the post-Malcev algebra $(V, \{\c,\c\}', \vartriangleright')$.
\end{prop}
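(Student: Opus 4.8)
The plan is to verify directly the two conditions defining a homomorphism of post-Malcev algebras for the structures produced by Theorem~\ref{thm:depostmal}\ref{it:depostmal1}. Recall that the post-Malcev algebra associated with $T$ (resp.\ $T'$) has bracket $[a,b]=\lambda[a,b]_V$ (resp.\ $[a,b]'=\lambda[a,b]_V$) and product $a\vartriangleright b=\rho(T(a))b$ (resp.\ $a\vartriangleright' b=\rho(T'(a))b$), so that what must be shown is $\psi(\lambda[a,b]_V)=\lambda[\psi(a),\psi(b)]_V$ and $\psi(a\vartriangleright b)=\psi(a)\vartriangleright'\psi(b)$ for all $a,b\in V$. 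I would treat these two identities separately, the first being essentially immediate and the second requiring the compatibility axioms of an $\mathcal{O}$-operator homomorphism.

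For the bracket identity, note that the post-Malcev brackets of both structures are simply $\lambda[\c,\c]_V$ and do not involve the operators $T$, $T'$ at all; since, by definition of a homomorphism of $\lambda$-weighted $\mathcal{O}$-operators, $\psi$ is in particular a Malcev algebra homomorphism of $(V,[\c,\c]_V)$, the identity $\psi(\lambda[a,b]_V)=\lambda[\psi(a),\psi(b)]_V$ is immediate. Combined with the product identity below, this also reproves the earlier statement that $\psi$ is a homomorphism of the induced Malcev algebras, which by Theorem~\ref{thm:depostmal}\ref{it:depostmal2} are precisely the sub-adjacent Malcev algebras of these post-Malcev algebras, since the sub-adjacent bracket $\{a,b\}=a\vartriangleright b-b\vartriangleright a+\lambda[a,b]_V$ is assembled from exactly the two pieces just discussed.

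For the product identity, the computation is short: starting from $\psi(a\vartriangleright b)=\psi(\rho(T(a))b)$, one applies the compatibility condition \eqref{condition-2} to move $\psi$ inside and obtain $\rho(\phi(T(a)))\psi(b)$, and then the intertwining condition \eqref{condition-1} relating $\phi$ to $T$ and $T'$ to rewrite $\phi(T(a))$ as $T'(\psi(a))$, yielding $\rho(T'(\psi(a)))\psi(b)=\psi(a)\vartriangleright'\psi(b)$. There is no genuine obstacle here — the $\mathcal{O}$-operators enter only through the formulas for $\vartriangleright$ and $\vartriangleright'$, and no post-Malcev axiom needs to be invoked — so the one point deserving attention is bookkeeping the orientation of \eqref{condition-1}--\eqref{condition-2} implicit in the phrase \emph{homomorphism from $T'$ to $T$}, so as to pair $\psi$ with the correct source and target post-Malcev algebras.
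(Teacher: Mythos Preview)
Your argument is correct and follows exactly the paper's own proof: verify directly that $\psi$ preserves the bracket $\lambda[\c,\c]_V$ (immediate since $\psi$ is a Malcev homomorphism of $V$) and the product $\vartriangleright$ (via \eqref{condition-2} then \eqref{condition-1}). Your closing caveat about the orientation of \eqref{condition-1} is well placed, since the proof line $\phi(T(a))=T'(\psi(a))$ is literally \eqref{condition-1} as stated for a homomorphism from $T$ to $T'$, so the labeling in the proposition should be read accordingly.
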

\begin{proof}
For all $a,b\in V,$ by \eqref{condition-1},\eqref{condition-2} and \eqref{malcev==>Postmalcev}, we have
\begin{eqnarray*}
 \psi(\{a,b\})&=&\psi(\lambda[a, b]_{V})=\lambda[\psi(a),\psi(b)]_{V}
=\{\psi(a),\psi(b)\}',\\
\psi(a\vartriangleright b)&=&\psi(\rho(T(a))b)=\rho(\phi(T(a)))(\psi(b))=\rho(T'(\psi(a)))(\psi(b))=
\psi(a)\vartriangleright'  \psi(b),
\end{eqnarray*}
which implies that $\psi$ is a homomorphism between the post-Malcev algebras in Theorem \ref{thm:depostmal}.
\end{proof}
Given a Malcev algebra, the following result gives a necessary and sufficient condition to have a compatible
post-Malcev algebra structure.
\begin{prop}
   Let $(A, [\c,\c])$ be a Malcev algebra. Then there exists  a compatible post-Malcev algebra structure on $A$ if and only if there exists an $A$-module Malcev algebra $(V; [\c,\c]_V, \rho)$ and an invertible $1$-weighted $\mathcal{O}$-operator $T:V\to A$.
\end{prop}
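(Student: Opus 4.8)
The plan is to prove both implications by explicit constructions: taking $T=\mathrm{id}$ in one direction and transporting the structure along $T$ in the other. Here I read ``a compatible post-Malcev algebra structure on $A$'' as a bilinear product $\rhd$ together with a Malcev bracket $[\c,\c]'$ on $A$ making $(A,[\c,\c]',\rhd)$ a post-Malcev algebra whose sub-adjacent Malcev bracket $\{x,y\}=x\rhd y-y\rhd x+[x,y]'$ equals the given bracket $[\c,\c]$. For the ``only if'' part, I would take such a structure $(A,[\c,\c]',\rhd)$ and set $V:=A$ as a vector space, $[\c,\c]_V:=[\c,\c]'$, and $\rho:=L_\rhd$ with $L_\rhd(x)y=x\rhd y$. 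By the proposition immediately preceding the statement, $(A^{C};[\c,\c]',L_\rhd)$ is an $A$-module Malcev algebra of $(A^{C},\{\c,\c\})$, and since $\{\c,\c\}=[\c,\c]$ by compatibility, this is precisely an $A$-module Malcev algebra of $(A,[\c,\c])$. It then remains only to observe that $T:=\mathrm{id}_A:V\to A$ is an invertible $1$-weighted $\mathcal{O}$-operator, because the defining identity \eqref{O-operatormalcev} here reads $[a,b]=a\rhd b-b\rhd a+[a,b]'$, which is exactly the compatibility condition $\{a,b\}=[a,b]$.

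For the ``if'' part, let $T:V\to A$ be an invertible $1$-weighted $\mathcal{O}$-operator associated to an $A$-module Malcev algebra $(V;[\c,\c]_V,\rho)$. By the first part of Theorem~\ref{thm:depostmal} with $\lambda=1$, $(V,[\c,\c]_V,\vartriangleright)$ is a post-Malcev algebra with $a\vartriangleright b=\rho(T(a))b$, and by its second part $T$ is a Malcev algebra homomorphism from the sub-adjacent Malcev algebra $(V,\{\c,\c\})$ of Proposition~\ref{postmalcev==>malcev} onto $(A,[\c,\c])$; being invertible, $T$ is an isomorphism of Malcev algebras. I would then transport the post-Malcev structure of $V$ to $A$ along $T$, setting $x\rhd y:=T\big(T^{-1}(x)\vartriangleright T^{-1}(y)\big)$ and $[x,y]':=T\big([T^{-1}(x),T^{-1}(y)]_V\big)$. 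Since $T$ is a linear isomorphism, $(A,[\c,\c]',\rhd)$ again obeys the post-Malcev identities \eqref{postmalcevalgebras1}--\eqref{postmalcevalgebras4}, and its sub-adjacent bracket is $T$ applied to that of $V$, hence equals $T\big(\{T^{-1}(x),T^{-1}(y)\}\big)=[x,y]$ by the homomorphism property (recall from Corollary~\ref{cor:induced-malc} that $\{\c,\c\}=[\c,\c]_T$ and $T(\{a,b\})=[T(a),T(b)]$). Thus $(A,[\c,\c]',\rhd)$ is a compatible post-Malcev structure on $A$.

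The computations are routine, apart from one point requiring care in the ``if'' direction: one must check that carrying both operations $[\c,\c]_V$ and $\vartriangleright$ along the linear isomorphism $T$ preserves the four post-Malcev axioms, and --- more importantly --- that the sub-adjacent bracket of the transported structure is literally the given $[\c,\c]$ rather than merely an isomorphic copy of it. This is precisely where the homomorphism statement of the second part of Theorem~\ref{thm:depostmal} is indispensable, and I expect this transport step to be the only real obstacle.
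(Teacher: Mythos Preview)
Your proposal is correct and follows essentially the same route as the paper: take $T=\mathrm{id}$ for the ``only if'' direction, and for the ``if'' direction transport the post-Malcev structure of Theorem~\ref{thm:depostmal} along the invertible $T$, then verify compatibility via the $\mathcal{O}$-operator identity (equivalently, via part~\ref{it:depostmal2} of that theorem). Your ``only if'' is in fact a bit more precise than the paper's, which writes the $A$-module as $(A,[\c,\c],ad)$; your choice $\rho=L_\rhd$, justified by the preceding proposition, is exactly what makes $T=\mathrm{id}$ a $1$-weighted $\mathcal{O}$-operator.
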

\begin{proof}
Let $(A,[\c,\c],\rhd)$ be a post-Malcev algebra and $(A,[\c,\c])$ be the associated Malcev algebra.  Then the identity map $id: A \to A$ is an  invertible $1$-weighted $\mathcal{O}$-operator on  $(A,[\c,\c])$  associated to
$(A, [\c,\c],ad)$.

Conversely, suppose that there exists an invertible $1$-weighted $\mathcal{O}$-operator  of $(A,[\c,\c])$  associated to  an $A$-module Malcev algebra
 $(V; [\c,\c]_V, \r)$ . Then, using Theorem \ref{thm:depostmal},  there is a post-Malcev algebra structure on $T(V)=A$ given by
\begin{equation*}
  \{x,y\}= \lambda T([T^{-1}(x),T^{-1}(y)]_V),\quad
  x\rhd y=T(\rho(x)T^{-1}(y)).
\end{equation*}
This is compatible post-Malcev algebra structure  on $(A,[\c,\c])$. Indeed,
\begin{align*}
x\rhd y-y\rhd x+ \{x,y\}
&= T(\rho(x)T^{-1}(y) - \rho(y)T^{-1}(x) +  [T^{-1}(x),T^{-1}(y)]_V)\\
&= [TT^{-1}(x),TT^{-1}(y)]=[x,y].
\qedhere
\end{align*}
\end{proof}
An obvious consequence of Theorem \ref{thm:depostmal} is the following construction of a post-Malcev  algebra in terms of $\lambda$-weighted Rota-Baxter operator  on a Malcev algebra.
\begin{cor}\label{PostMalcevByRotaBaxter}
  Let $(A, [\c,\c])$ be a Malcev algebra and the linear map $\mathcal{R}: A\rightarrow A$  is  a $\lambda$-weighted  Rota-Baxter operator. Then, there exists a post-Malcev structure on $A$ given,
  for all $x,y\in A$, by
\begin{eqnarray*}
\{x,y\}=\lambda[x,y],\quad  x\vartriangleright y=[\mathcal{R}(x),y].
\end{eqnarray*}
If in addition, $\mathcal{R}$ is invertible, then there is a compatible post-Malcev algebra structure on $A$ given, for all $x,y \in A$, by
\begin{equation*}
  \{x,y\}= \mathcal{R}([\mathcal{R}^{-1}(x),\mathcal{R}^{-1}(y)],\quad  x\rhd y=\mathcal{R}([x,\mathcal{R}^{-1}(y)]).
\end{equation*}
\end{cor}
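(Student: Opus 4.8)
The plan is to derive this corollary directly from Theorem \ref{thm:depostmal} by specializing the $\mathcal{O}$-operator to be a Rota-Baxter operator. First I would recall that, by Example \ref{exam-wrb} together with the discussion preceding it, a $\lambda$-weighted Rota-Baxter operator $\mathcal{R}: A \to A$ is precisely a $\lambda$-weighted $\mathcal{O}$-operator associated to the adjoint $A$-module Malcev algebra $(A; [\c,\c], ad)$. Thus I may apply Theorem \ref{thm:depostmal}\ref{it:depostmal1} with $V = A$, $[\c,\c]_V = [\c,\c]$, $\rho = ad$, and $T = \mathcal{R}$. Substituting into the formulas \eqref{malcev==>Postmalcev} gives $[x,y] = \lambda [x,y]$ and $x \vartriangleright y = ad(\mathcal{R}(x))y = [\mathcal{R}(x), y]$, which is exactly the asserted post-Malcev structure; the theorem guarantees that these operations satisfy the post-Malcev axioms \eqref{postmalcevalgebras1}--\eqref{postmalcevalgebras4}.

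For the invertible case, the argument is the same specialization applied to the final Proposition on compatible post-Malcev structures. When $\mathcal{R}$ is an invertible $\lambda$-weighted $\mathcal{O}$-operator with $\lambda = 1$ (or after rescaling, noting by Example \ref{exam-wrb}(iv) that $\nu \mathcal{R}$ is a $(\nu\lambda)$-weighted operator, so one may normalize), the induced structure on $\mathcal{R}(V) = A$ is transported by $\mathcal{R}$: one sets $\{x,y\} = \mathcal{R}([\mathcal{R}^{-1}(x), \mathcal{R}^{-1}(y)])$ and $x \rhd y = \mathcal{R}([x, \mathcal{R}^{-1}(y)])$, and the compatibility $x \rhd y - y \rhd x + \{x,y\} = [x,y]$ follows from the defining identity of the $\mathcal{O}$-operator exactly as in the proof of that Proposition. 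I would simply remark that this is the $V = A$, $\rho = ad$ instance of the preceding result.

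The only real subtlety — and the step I expect to require a word of care — is the bookkeeping of the weight. Theorem \ref{thm:depostmal} produces the bracket $\lambda[a,b]_V$ for a general $\lambda$, so in the first half of the corollary one keeps $\{x,y\} = \lambda[x,y]$; but in the second half the statement of the ambient Proposition is phrased for a $1$-weighted operator, so invoking it for general $\lambda$ needs the rescaling observation from Example \ref{exam-wrb}(iv) (or a direct re-derivation of the transported formulas, which is a short computation). Apart from this, there is no genuine obstacle: the corollary is a pure instantiation, and the bulk of the work has already been carried out in Theorem \ref{thm:depostmal} and its surrounding results.
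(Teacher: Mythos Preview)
Your proposal is correct and matches the paper's approach exactly: the paper gives no explicit proof of this corollary, introducing it merely as ``an obvious consequence of Theorem \ref{thm:depostmal}'', so specializing $V=A$, $\rho=ad$, $T=\mathcal{R}$ is precisely what is intended. Your observation about the weight bookkeeping in the invertible clause is well taken---indeed the preceding Proposition is stated for $1$-weighted operators and the corollary's second formula carries no $\lambda$ factor---but the paper does not address this point either, so your plan already exceeds the level of detail the paper provides.
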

\begin{ex}
In this example, we calculate $(-1)$-weighted Rota-Baxter operators on the Malcev algebra
$A$ and we give the corresponding post-Malcev algebras.
Let $A$ be the simple Malcev algebra over the field of complex numbers $\mathbb{C}$ {\rm \cite[Example 3]{goncharov}}. In this case $A$ has a basis  $\{e_1,e_2,e_3,e_4,e_5, e_6, e_7\}$ with the following table of multiplication:
\begin{center}
\begin{tabular}{c|c|c|c|c|c|c|c}
$[\c, \c]$ & $e_1$ & $e_2$ & $e_3$ & $e_4$ & $e_5$ & $e_6$ & $e_7$ \\
\hline
$e_1$ & $0$ & $2e_2$ & $-2e_3$ & $2e_4$ & $-2e_5$ & $2e_6$ & $-2e_7$\\
\hline
$e_2$ & $-2e_2$ & $0$ & $e_1$ & $2e_7$ & $0$ & $-2e_5$ & $0$  \\
\hline
$e_3$ & $2e_3$ & $-e_1$ & $0$ & $0$ & $-2e_6$ & $0$ & $2e_4$ \\
 \hline
$e_4$ & $-2e_4$ & $-2e_7$ & $0$ & $0$ & $e_1$ & $2e_3$ & $0$ \\
  \hline
$e_5$ & $2e_5$ & $0$ & $2e_6$ & $-e_1$ & $0$ & $0$ & $-2e_2$\\
 \hline
$e_6$ & $-2e_6$ & $2e_5$ & $0$ & $-2e_3$ & $0$ & $0$ & $e_1$\\
 \hline
$e_7$ & $2e_7$ & $0$ & $-2e_4$ & $0$ & $2e_2$ & $-e_1$ & $0$
\end{tabular}.
\end{center}
Now, define the linear map $\mathcal{R}:A\to A$ by
\begin{align*}\label{RotaBaxterA}
  &\mathcal{R}(e_1)=\frac{1}{2}e_1 + 2\alpha e_2 + 2\beta e_5 + 2\gamma e_6, \ \ \mathcal{R}(e_2)=0, \ \ \mathcal{R}(e_3)= e_3 - \alpha e_1 + \delta e_5  - 2\beta e_6,\\
  &\mathcal{R}(e_4)=e_4- \beta e_1 - \delta e_2 + \mu e_6, \ \ \mathcal{R}(e_5) = \mathcal{R}(e_6) =0, \ \ \mathcal{R}(e_7) = e_7 -\gamma e_1 + 2 \beta e_2- \mu e_5.
\end{align*}
Then $\mathcal{R}$ is a $(-1)$-weighted Rota-Baxter operator on $A$. Using Corollary \ref{PostMalcevByRotaBaxter}, we can construct a post-Malcev algebra on $A$ given by
\begin{center}
\begin{tabular}{c|c|c|c|c|c|c|c}
$\{\c, \c\}$ & $e_1$ & $e_2$ & $e_3$ & $e_4$ & $e_5$ & $e_6$ & $e_7$ \\
\hline
$e_1$ & $0$ & $2\lambda e_2$ & $-2\lambda e_3$ & $2\lambda e_4$ & $-2\lambda e_5$ & $2\lambda e_6$ & $-2\lambda e_7$\\
\hline
$e_2$ & $-2\lambda e_2$ & $0$ & $\lambda e_1$ & $2\lambda e_7$ & $0$ & $-2\lambda e_5$ & $0$  \\
\hline
$e_3$ & $2\lambda e_3$ & $-\lambda e_1$ & $0$ & $0$ & $-2\lambda e_6$ & $0$ & $2\lambda e_4$ \\
\hline
$e_4$ & $-2\lambda e_4$ & $-2\lambda e_7$ & $0$ & $0$ & $\lambda e_1$ & $2\lambda e_3$ & $0$ \\
\hline
$e_5$ & $2\lambda e_5$ & $0$ & $2\lambda e_6$ & $-\lambda e_1$ & $0$ & $0$ & $-2\lambda e_2$\\
\hline
$e_6$ & $-2\lambda e_6$ & $2\lambda e_5$ & $0$ & $-2\lambda e_3$ & $0$ & $0$ & $\lambda e_1$\\
\hline
$e_7$ & $2\lambda e_7$ & $0$ & $-2\lambda e_4$ & $0$ & $2\lambda e_2$ & $-\lambda e_1$ & $0$
\end{tabular}

\end{center}
\footnotesize{
\begin{center}
\begin{tabular}{c|@{\hspace{1mm}}c@{\hspace{1mm}}|@{\hspace{1mm}}c@{\hspace{1mm}}|@{\hspace{1mm}}c@{\hspace{1mm}}|@{\hspace{1mm}}c@{\hspace{1mm}}|@{\hspace{1mm}}c@{\hspace{1mm}}|@{\hspace{1mm}}c@{\hspace{1mm}}|@{\hspace{1mm}}c@{\hspace{1mm}}}
$\rhd$ & $e_1$ & $e_2$ & $e_3$ & $e_4$ & $e_5$ & $e_6$ & $e_7$ \\[2.5mm]
\hline
\rule{0pt}{5mm}
$e_1$ & $\begin{array}{l}4 \beta e_5 -4\alpha e_2 \\ \quad - 4 \gamma e_6\end{array}$ & $e_2 + 4\gamma e_5$ & $\begin{array}{l}2\alpha e_1 -e_3\\ \quad+4\beta e_6\end{array}$ & $\begin{array}{l}e_4 + 4\alpha e_7\\- 2\beta e_1 - 4\gamma e_3\end{array}$ & $-e_5$ & $e_6 - 4\alpha e_5$ & $\begin{array}{l}2\gamma e_1 -e_7\\ \quad-4\beta e_2\end{array}$\\[2.5mm]
\hline
\rule{0pt}{5mm}
$e_2$ & $0$ & $0$ & $0$ & $0$ & $0$ & $0$ & $0$  \\[2.5mm]
\hline
\rule{0pt}{5mm}
$e_3$ & $\begin{array}{l}2e_3 + 2\delta e_5\\ \quad+4\beta e_6\end{array}$ & $\begin{array}{l}-e_1 - 2\alpha e_2\\\quad-4\beta e_5\end{array}$ & $2\alpha e_3 + 2\delta e_6$ & $\begin{array}{l}4\beta e_3 -2\alpha e_4\\\quad-\delta e_1\end{array}$ & $2\alpha e_5 - 2e_6$ & $-2\alpha e_6$ & $\begin{array}{l}2e_4 + 2\alpha e_7\\-2\delta e_2-2 \beta e_1\end{array}$ \\[2.5mm]
\hline
\rule{0pt}{5mm}
$e_4$ & $\begin{array}{l}2\delta e_2 - 2e_4\\\quad- 2\mu e_6\end{array}$ & $\begin{array}{l} 2\mu e_5-2e_7\\\quad-2\beta e_2\end{array}$ & $2\beta e_3 - \delta e_1$ & $\begin{array}{l}-2\beta e_4 - 2\delta e_7\\\quad-2\mu e_3\end{array}$ & $e_1 + 2\beta e_5$ & $\begin{array}{l}2e_3 + 2\delta e_5\\\quad- 2\beta e_6\end{array}$ & $2\beta e_7 + \mu e_1$ \\[2.5mm]
\hline
\rule{0pt}{5mm}
$e_5$ &$0$ & $0$ & $0$ & $0$ & $0$ & $0$ & $0$ \\[2.5mm]
\hline
\rule{0pt}{5mm}
$e_6$ & $0$ & $0$ & $0$ & $0$ & $0$ & $0$ & $0$ \\[2.5mm]
\hline
\rule{0pt}{5mm}
$e_7$ & $\begin{array}{l}2e_7- 4\beta e_2\\\quad- 2\mu e_5\end{array}$ & $-2\gamma e_2$ & $\begin{array}{l}2\beta e_1 +2\gamma e_3\\ -2e_4 - 2\mu e_6\end{array}$ & $\begin{array}{l}\mu e_1 -2\gamma e_4\\\quad+4\beta e_7\end{array}$ & $2e_2+ 2\gamma e_5$ & $\begin{array}{l}-e_1- 2\gamma e_6\\\quad- 4\beta e_5\end{array}$ & $2\gamma e_7+ 2\mu e_2$
\end{tabular}
\end{center}}
\end{ex}

The following result establishes a close relation between a post-alternative algebra and a post-Malcev algebra.

\begin{thm}
Let $T:V\to A$ be a $\lambda$-weighted  $\mathcal{O}$-operator of alternative algebra $(A,\c)$ with respect to
     $(V,\c_V, \mathfrak{l},\mathfrak{r})$ and $(V,\circ,\prec,\succ)$ be the associated  post-alternative algebra given in Theorem \ref{thm:depostalt}. Then $T$ is a $\lambda$-weighted  $\mathcal{O}$-operator on the Malcev admissible algebra $(A,[\c,\c])$  with respect to an $A$-module Malcev algebra    $(V;[\c,\c]_V, \mathfrak{l} - \mathfrak{r})$.

     Moreover, if
 $(V,\{\c,\c\},\rhd)$ be a post-Malcev algebra associated to the Malcev admissible algebra $(A,[\c,\c])$ on  $(V;[\c,\c]_V,\mathfrak{l}-\mathfrak{r})$.
Then, the products $(\{\c,\c\},\rhd)$ are related with $(\circ,\prec,\succ)$ as follow, for all $a,b\in V$,
   \begin{equation}
\label{postaltr to postmalcev } \{a,b\}= a\circ b- b\circ  a, \quad a \rhd b= a \succ b -b\prec a.
\end{equation}
\end{thm}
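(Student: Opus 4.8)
The plan is to deduce the statement by unwinding the constructions already established; essentially no new identity has to be checked from scratch.

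\emph{First part.} By Proposition \ref{repalt=repmalc}, the triple $(V;[\c,\c]_V,\mathfrak{l}-\mathfrak{r})$ is an $A$-module Malcev algebra over the Malcev admissible algebra $(A,[\c,\c])$, so it is meaningful to ask whether $T$ is a $\lambda$-weighted $\mathcal{O}$-operator associated to it. To verify \eqref{O-operatormalcev} I would simply antisymmetrise the alternative $\mathcal{O}$-operator relation \eqref{oopaltalg}. Writing $[T(a),T(b)]=T(a)\c T(b)-T(b)\c T(a)$ and applying \eqref{oopaltalg} to each of the two products (the second with $a$ and $b$ interchanged), linearity of $T$ gives
\[
[T(a),T(b)]=T\big((\mathfrak{l}-\mathfrak{r})(T(a))b-(\mathfrak{l}-\mathfrak{r})(T(b))a+\lambda(a\c_V b-b\c_V a)\big),
\]
which is exactly condition \eqref{O-operatormalcev} for the representation $\rho=\mathfrak{l}-\mathfrak{r}$ and the bracket $[\c,\c]_V$. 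Hence $T$ is a $\lambda$-weighted $\mathcal{O}$-operator on $(A,[\c,\c])$ associated to $(V;[\c,\c]_V,\mathfrak{l}-\mathfrak{r})$.

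\emph{Second part.} By \eqref{alt==>postalt} of Theorem \ref{thm:depostalt}, the post-alternative structure induced by $T$ on $V$ is $a\succ b=\mathfrak{l}(T(a))b$, $a\prec b=\mathfrak{r}(T(b))a$ and $a\circ b=\lambda\,a\c_V b$, whereas by \eqref{malcev==>Postmalcev} of Theorem \ref{thm:depostmal}, applied to the $\mathcal{O}$-operator $T$ with respect to $(V;[\c,\c]_V,\mathfrak{l}-\mathfrak{r})$, the induced post-Malcev structure on $V$ is $\{a,b\}=\lambda[a,b]_V$ and $a\rhd b=(\mathfrak{l}-\mathfrak{r})(T(a))b$. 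It then suffices to substitute: $a\circ b-b\circ a=\lambda\,a\c_V b-\lambda\,b\c_V a=\lambda[a,b]_V=\{a,b\}$, and $a\succ b-b\prec a=\mathfrak{l}(T(a))b-\mathfrak{r}(T(a))b=(\mathfrak{l}-\mathfrak{r})(T(a))b=a\rhd b$, which are precisely the relations \eqref{postaltr to postmalcev } claimed in the statement.

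\emph{On the difficulty.} The whole argument is formal once Proposition \ref{repalt=repmalc} and Theorems \ref{thm:depostalt} and \ref{thm:depostmal} are available; the only point deserving attention is bookkeeping — one must use that the post-Malcev operations of Theorem \ref{thm:depostmal} are built from $\rho(T(a))=(\mathfrak{l}-\mathfrak{r})(T(a))$, and that the middle operation $\circ$ of the post-alternative algebra is exactly what produces the Malcev bracket $\{\c,\c\}$ after antisymmetrisation, in complete parallel with the passage from pre-alternative to pre-Malcev algebras in Proposition \ref{prealt==>premal}. There is no genuine obstacle: the single displayed antisymmetrisation above is the only computation required.
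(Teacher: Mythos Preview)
Your proposal is correct and follows essentially the same approach as the paper: both invoke Proposition~\ref{repalt=repmalc}, antisymmetrise \eqref{oopaltalg} to obtain \eqref{O-operatormalcev} for $\rho=\mathfrak{l}-\mathfrak{r}$, and then compare \eqref{alt==>postalt} with \eqref{malcev==>Postmalcev} to read off the relations \eqref{postaltr to postmalcev }. The only cosmetic difference is that the paper writes the first computation as a difference equalling zero, while you write it forward.
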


\begin{proof} Using the condition of $\lambda$-weighted  $\mathcal{O}$-operator in \eqref{oopaltalg} and  Proposition \ref{repalt=repmalc},
for $a, b\in A$,
\begin{align*}
 & [T(a),T(b)]-T\big(\rho(T(a))b-\rho(T(b))a+\lambda[a,b]_V\big)\\
 & =T(a)\cdot T(b)-T(b)\cdot T(a)-T\big((\mathfrak{l}-\mathfrak{r})(T(a))b-(\mathfrak{l}-\mathfrak{r})(T(b))a+\lambda(a\cdot_V b-b\cdot_V a\big)=0.
\end{align*} Then $T$  is a $\lambda$-weighted  $\mathcal{O}$-operator on the Malcev admissible algebra $(A,[\c,\c])$  with respect to an $A$-module Malcev algebra $(V;[\c,\c]_V, \mathfrak{l} - \mathfrak{r})$.

On the other hand, from \eqref{alt==>postalt} of Theorem \ref{thm:depostalt}  and \eqref{malcev==>Postmalcev} of Theorem \ref{thm:depostmal} that
\begin{align*}
&\{a,b\}=\lambda[a,b]_V=\lambda a\c_V b-\lambda b\c_V a=a\circ b-b\circ a,\\
&a\rhd b=(\mathfrak{l}-\mathfrak{r})(T(a))b=\mathfrak{l}(T(a))b-\mathfrak{r}(T(a))b=a\succ b-b\prec a.
\qedhere \end{align*}
\end{proof}

\begin{cor}
Let $(A,\circ,\prec, \succ)$ be a post-alternative algebra given in Corollary \ref{cor:postaltRB},  $(A,\{\c,\c\},\rhd)$ be a post-Malcev algebra associated to the Malcev algebras $(A,[\c,\c])$  and let $\mathcal{R}$ be a $\lambda$-weighted Rota-Baxter operator of  $(A,\c)$. Then, the operations
\begin{equation}
\label{postaltr=>postmalcev} \{x,y\}= x\circ y- y\circ  x, \quad x \rhd y= x \succ y -y\prec x,
\end{equation}
 define a post-Malcev structure in $A$.
\end{cor}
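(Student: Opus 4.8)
The plan is to read off this corollary as the specialization $V=A$, $T=\mathcal{R}$ of the preceding theorem, in which the $A$-bimodule alternative algebra is the regular one $(A,\c,L_\c,R_\c)$. First I would recall, from \eqref{rotabaxteraltalg} just after Definition \ref{o-opALT}, that a $\lambda$-weighted Rota-Baxter operator $\mathcal{R}$ on $(A,\c)$ is precisely a $\lambda$-weighted $\mathcal{O}$-operator $\mathcal{R}\colon A\to A$ associated to $(A,\c,L_\c,R_\c)$. Applying Theorem \ref{thm:depostalt} with this datum produces on $A$ the post-alternative algebra with $a\succ b=L_\c(\mathcal{R}(a))b=\mathcal{R}(a)\c b$, $a\prec b=R_\c(\mathcal{R}(b))a=a\c\mathcal{R}(b)$ and $a\circ b=\lambda\, a\c b$, which is exactly the post-alternative algebra $(A,\circ,\prec,\succ)$ of Corollary \ref{cor:postaltRB}.

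Next I would feed this into the preceding theorem. Its first assertion then says that $\mathcal{R}$ is a $\lambda$-weighted $\mathcal{O}$-operator on the Malcev-admissible algebra $(A,[\c,\c])$ with respect to the $A$-module Malcev algebra $(A;[\c,\c]_V,L_\c-R_\c)$. Here $[\c,\c]_V$ is the commutator bracket of $\c_V=\c$, so $[\c,\c]_V=[\c,\c]$, and $(L_\c-R_\c)(x)y=x\c y-y\c x=[x,y]=ad(x)y$; hence this module is just the adjoint one $(A;[\c,\c],ad)$. Theorem \ref{thm:depostmal} now endows $A$ with a post-Malcev algebra structure, and the relations \eqref{postaltr to postmalcev } of the preceding theorem give, for all $x,y\in A$,
\begin{align*}
\{x,y\}&=x\circ y-y\circ x=\lambda(x\c y)-\lambda(y\c x)=\lambda[x,y],\\
x\rhd y&=x\succ y-y\prec x=\mathcal{R}(x)\c y-y\c\mathcal{R}(x)=[\mathcal{R}(x),y],
\end{align*}
which are precisely the operations in the statement; they also coincide with those in Corollary \ref{PostMalcevByRotaBaxter}, a convenient consistency check.

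Since every step is a substitution into results already proved, I expect no real obstacle; the only care needed is bookkeeping: choosing the regular $A$-bimodule $(A,\c,L_\c,R_\c)$, checking $L_\c-R_\c=ad$, and tracking the formulas \eqref{alt==>postalt}, \eqref{malcev==>Postmalcev} and \eqref{postaltr to postmalcev } under $T=\mathcal{R}$. One could instead verify directly that $\bigl(\lambda[\c,\c],\,[\mathcal{R}(\cdot),\cdot]\bigr)$ satisfies the post-Malcev identities \eqref{postmalcevalgebras1}--\eqref{postmalcevalgebras4}, but that merely reproves Corollary \ref{PostMalcevByRotaBaxter} and is superfluous here.
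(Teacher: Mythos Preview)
Your proposal is correct and follows exactly the intended approach: the paper states this corollary without proof, precisely because it is the specialization $V=A$, $T=\mathcal{R}$, $(V,\c_V,\mathfrak{l},\mathfrak{r})=(A,\c,L_\c,R_\c)$ of the preceding theorem, which is what you carry out. Your bookkeeping (identifying $L_\c-R_\c=ad$ and matching the resulting operations with those of Corollary~\ref{PostMalcevByRotaBaxter}) is accurate and complete.
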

It is easy to see that   \eqref{malcevbracket} and  \eqref{postaltr=>postmalcev}  fit into the commutative diagram
\begin{equation}\label{di:postmalcev}
\xymatrix{
\begin{array}{c}
\substack{\text{Post-alternative} \\ \text{alg.}}
\end{array}
\ar[rr]^{\quad \quad  x\prec y + x\succ y + x\cdot y\quad }
\ar[dd]^{x\rhd y =x\succ y - y\prec x }_{\{x,y\}=x\circ y -y\circ x} &&
\begin{array}{c}
\substack{\text{alternative}\\ \text{alg.}}
\end{array}
\ar[dd]^{x\star y - y\star x} \\
&&\\
\begin{array}{c}
\substack{\text{post-Malcev} \\ \text{alg.}}
\end{array}
\ar[rr]^{\quad x\rhd y- y\rhd x+[x,y]} &&
\begin{array}{c}
\substack{\text{Malcev} \\ \text{alg.}}
\end{array}}
\end{equation}
When the operation $\cdot$ of the post-alternative algebra and the bracket $[\c,\c]$ of the post-Malcev algebra are both trivial, we obtain the following commutative diagram.
$$
\xymatrix{
\begin{array}{c}
\substack{\text{Pre-alternative} \\ \text{alg.}}
\end{array} \ar[rr]^{\quad x\prec y + x\succ y} \ar[dd]^{x\succ y - y\prec x} && \begin{array}{c}
\substack{\text{Alternative} \\ \text{alg.}}
\end{array}
\ar[dd]^{x\star y - y\star x}\\
&& \\
\begin{array}{c}
\substack{\text{Pre-Malcev} \\ \text{alg.}}
\end{array}
\ar[rr]^{\quad x\rhd y - y\rhd x} &&
\begin{array}{c}
\substack{\text{Malcev} \\ \text{alg.}}
\end{array}
}
$$


\end{document}